\newtheorem{theorem}{Theorem}[section]
\newtheorem{lemma}[theorem]{Lemma}
\newtheorem{proposition}[theorem]{Proposition}
\newtheorem{Theorem}{Theorem}
\newtheorem{Corollary}[Theorem]{Corollary}
\newtheorem{Remark}[Theorem]{Remark}
\newcommand{\imod}[1]{\allowbreak\mkern4mu({\operator@font mod}\,\,#1)}
\newcommand{\la}{\langle}
\newcommand{\ra}{\rangle}
\newcommand{\Z}{\mathbb{Z}}
\newcommand{\Irr}{{\mathrm {Irr}}}
\newcommand{\Centralizer}{{\mathrm {C}}}
\newcommand{\Normalizer}{{\mathrm {N}}}
\newcommand{\GL}{{\mathrm {GL}}}
\newcommand{\SL}{{\mathrm {SL}}}
\newcommand{\Sp}{{\mathrm {Sp}}}
\renewcommand{\a}{\alpha}
\renewcommand{\b}{\beta}
 \newcommand{\e}{\epsilon}
 \renewcommand{\O}{\Omega}
 \renewcommand{\to}{\rightarrow}
\newcommand{\leqs}{\leqslant}
\newcommand{\geqs}{\geqslant}
 \newcommand{\vs}{\vspace{2mm}}
\renewcommand{\la}{\langle}
\renewcommand{\ra}{\rangle}
\newcommand{\normeq}{\trianglelefteqslant}
\theoremstyle{definition}
\newtheorem{rem}[theorem]{Remark}
\begin{document}

\title[Derangements in primitive groups]{Derangements in primitive permutation groups, \\
with an application to character theory}

\author{Timothy C. Burness}
\email{t.burness@bristol.ac.uk }
\address{T.C. Burness, School of Mathematics, University of Bristol, Bristol BS8 1TW, UK}

\author{Hung P. Tong-Viet$^\dag$}
\email{ptongviet@math.uni-bielefeld.de}
\address{H.P. Tong-Viet, Fakult\"at f\"ur Mathematik, Universit\"at Bielefeld, D-33501 Bielefeld, Germany}

\thanks{$^{\dag}$ Supported by the project C$13$ `The geometry and combinatorics of groups' within the CRC 701.}

\subjclass[2010]{Primary 20B15; secondary 20C15}

\date{\today}


\begin{abstract}
Let $G$ be a finite primitive permutation group and let $\kappa(G)$ be the number of conjugacy classes of derangements in $G$. By a classical theorem of Jordan, $\kappa(G) \geqs 1$. In this paper we classify the groups $G$ with $\kappa(G)=1$, and we use this to obtain new results on the structure of finite groups with an irreducible complex character that vanishes on a unique conjugacy class. We also obtain 
detailed structural information on the groups with $\kappa(G)=2$, including a complete classification for almost simple groups.
\end{abstract}

\maketitle


\section{Introduction}\label{s:intro}

Let $G$ be a transitive permutation group on a finite set $\Omega$ of size $n\geqs 2$, and let $H$ be the stabiliser of a point. An element $x\in G$ is a \emph{derangement} if it acts
fixed-point-freely on $\Omega$, or equivalently, if $x^G\cap H$ is empty, where $x^G$ is the conjugacy class of $x$ in $G$. The existence of derangements is guaranteed by a classical theorem of Jordan \cite{Jordan}, and we will write
$$\Delta(G) = G \setminus \bigcup_{g \in G}H^g$$
for the set of derangements in $G$. As discussed by Serre \cite{Serre}, Jordan's theorem has many
interesting applications in number theory and topology.

Various extensions and generalisations of Jordan's theorem have been studied in recent years. For
example, let $\delta(G) = |\Delta(G)|/|G|$ be the proportion of derangements in $G$. By a theorem
of Cameron and Cohen \cite{CC}, $\delta(G) \geqs 1/n$ and equality holds if and only if $G$ is
sharply $2$-transitive (that is, either $(G,n) = (S_2,2)$ or $G$ is a Frobenius group of order
$n(n-1)$ with $n$ a prime power). Using the Classification of Finite Simple Groups (CFSG),
Guralnick and Wan \cite{GW} have established the better bound $\delta(G) \geqs 2/n$ (with prescribed
exceptions), and a very recent theorem of Fulman and Guralnick (see \cite{FG1, FG2, FG3, FG4})
states that there is an absolute constant $\e>0$ such that $\delta(G)>\e$ for any simple transitive
group $G$. This latter result confirms a conjecture of Boston et al. \cite{Boston} and Shalev.

In a different direction, one can consider the existence of derangements of a given order. By a
theorem of Fein, Kantor and Schacher \cite{FKS}, $G$ contains a derangement of prime-power order
(their proof requires CFSG), and this result has important number-theoretic applications. However,
$G$ may not contain a derangement of prime order, and in this situation we say that $G$ is
\emph{elusive}. The first construction of elusive groups was presented in \cite{FKS}: let $p$ be a
Mersenne prime and take $G={\rm AGL}_{1}(p^2)$ and $H={\rm AGL}_{1}(p)$, so $n=p(p+1)$ and $G$ is
elusive since all elements of order $2$ or $p$ are conjugate in $G$. In \cite{Giudici}, Giudici
classifies the quasiprimitive elusive groups, and it follows that the $3$-transitive action of the
smallest Mathieu group ${\rm M}_{11}$ on $12$ points is the only almost simple primitive elusive
group. In \cite{IKLM}, the transitive groups $G$ in which all derangements are involutions are
determined; $G$ is either an elementary abelian $2$-group, or a Frobenius group with kernel an elementary abelian $2$-group.

In this paper, we are interested in the number of conjugacy classes of derangements in $G$, which we denote by $\kappa(G)$ (note that $\Delta(G)$ is a normal subset of $G$). By Jordan's theorem,
$\kappa(G) \geqs 1$. Our first result determines the primitive groups with $\kappa(G)=1$, and our
second gives a stronger result for almost simple groups.

\begin{Theorem}\label{t:main1}
Let $G$ be a finite primitive permutation group with point stabiliser $H$. Then $\kappa(G)=1$ if
and only if $G$ is sharply $2$-transitive, or $(G,H) = (A_5,D_{10})$ or $({\rm
L}_{2}(8){:}3,D_{18}{:}3)$.
\end{Theorem}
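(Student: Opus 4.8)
The plan is to prove both directions, with the reverse implication a direct verification and the forward implication built on the O'Nan--Scott theorem together with CFSG. For sufficiency I would identify the derangements explicitly. If $G$ is sharply $2$-transitive then it has a regular normal subgroup $V$ of order $n$; every nonidentity element of $V$ is fixed-point-free, these are the only derangements (each element outside $V$ lies in a point stabiliser), and by $2$-transitivity $H$ is transitive on $V\setminus\{1\}$, so this is a single class and $\kappa(G)=1$. For $(G,H)=(A_5,D_{10})$ and $({\rm L}_2(8){:}3,D_{18}{:}3)$ I would evaluate the permutation character $1_H^G$ on each class: in the first only the elements of order $3$ are fixed-point-free, forming one class; in the second only the elements of order $7$ are, and the order-$3$ field automorphism fuses the three ${\rm L}_2(8)$-classes of such elements into one. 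Hence $\kappa(G)=1$ in all three cases.

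For necessity, suppose $\kappa(G)=1$, so all derangements are conjugate and in particular share a common order, which by Fein--Kantor--Schacher is a prime power $r^a$. The central tool is the elementary observation that if a prime $s$ satisfies $s\mid |G|$ but $s\nmid |H|$, then every element of order $s$ is a derangement; hence distinct such primes force distinct (non-conjugate) classes of derangements, giving $\kappa(G)\geqs \#\{s : s\mid|G|,\ s\nmid|H|\}$. Thus $\kappa(G)=1$ permits at most one such prime, and refined versions of this counting, tracking the number of $G$-classes of order-$r$ elements and the absence of derangements of order $r^{a'}$ with $a'<a$, yield strong arithmetic restrictions on $(|G|,|H|)$. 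I would use these to eliminate the diagonal, product and twisted-wreath types, where the large socle $T^k$ with $k\geqs 2$ forces derangements of several distinct orders, leaving only the affine and almost simple types.

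In the affine case $G=V{:}H_0$ with $V$ elementary abelian of order $p^d$ and $H_0\leqs \GL_d(p)$ irreducible, the nonidentity elements of $V$ are derangements of order $p$, and since the $G$-classes inside $V$ are exactly the $H_0$-orbits, $\kappa(G)=1$ forces $H_0$ to be transitive on $V\setminus\{0\}$; that is, $G$ is $2$-transitive. I would then run through Hering's classification of the transitive linear groups and show that every genuinely non-sharp case produces a second class of derangements, typically of order different from $p$ arising from a non-stabiliser prime or from a split class of semisimple elements, leaving precisely the Frobenius (sharply $2$-transitive) examples.

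The bulk of the work, and the main obstacle, is the almost simple case $T\leqs G\leqs \Aut(T)$, which I would treat via CFSG, handling the alternating, sporadic and Lie type groups and their maximal subgroups $H$ in turn. The guiding principle is again to locate two classes of derangements: either two primes dividing $|G|$ but not $|H|$, or a single such prime $r$ whose order-$r$ elements split into several $G$-classes, or a derangement of composite or higher prime-power order alongside one of prime order. The hard part is the low-rank groups of Lie type and the small-degree actions, especially ${\rm L}_2(q)$ and its relatives with $H$ a torus normaliser or a parabolic, where the two-primes bound is too blunt and one must compute fixed-point numbers and class fusion explicitly. It is exactly in these borderline families that the two surviving examples $(A_5,D_{10})$ and $({\rm L}_2(8){:}3,D_{18}{:}3)$ appear, and confirming that no further examples slip through is the delicate core of the argument.
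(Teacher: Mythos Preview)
Your overall strategy matches the paper's: reduce via O'Nan--Scott to the affine and almost simple cases, then handle the latter via CFSG. The sufficiency verifications and the elimination of the product/diagonal/twisted-wreath types are essentially as in the paper (though the paper's product-action argument is slightly more careful, exhibiting explicit non-conjugate derangements $(u,1,\ldots,1)$ and $(u,u,1,\ldots,1)$).

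The notable divergence is in the affine case. You propose, once $H_0$ is seen to be transitive on $V\setminus\{0\}$, to run through Hering's classification of transitive linear groups and eliminate each non-sharp example by locating a second derangement class. This would work, but the paper bypasses Hering entirely with a two-line argument: since $\Delta(G)=N^*=x^G$ with $N$ regular abelian, one has $N\leqs \Centralizer_G(x)$, so $|x^G|\leqs |G:N|=|H|$; combining this with the Cameron--Cohen bound $|\Delta(G)|\geqs |H|$ (equality iff sharply $2$-transitive) forces $\Centralizer_G(x)=N$ and hence sharp $2$-transitivity immediately. Your route is valid but considerably heavier machinery for this step; the paper reserves Hering for the harder $\kappa(G)=2$ problem.

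For the almost simple case the paper in fact proves the stronger $\kappa(G)\geqs 3$ (with a short list of exceptions), and its primary device is not the prime-divisor count you emphasise but rather the identification of regular semisimple elements contained in very few maximal subgroups (drawing on Weigel, Guralnick--Kantor, and Breuer--Guralnick--Kantor), together with lower bounds on the number of $\Aut(S)$-classes of such elements via Euler's $\phi$-function. Your prime-counting heuristic disposes of many cases but, as you yourself note, is too coarse for the low-rank Lie type groups; the paper's ``elements in few maximal overgroups'' method is what makes that part systematic rather than ad hoc.
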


\begin{Theorem}\label{t:main2}
Let $G$ be a finite almost simple primitive permutation group with point stabiliser $H$. Then
either $\kappa(G) \geqs 3$, or $(\kappa(G),G,H)$ is recorded in Table \ref{t:kappa}. Moreover,
$\kappa(G)$ tends to infinity as $|G|$ tends to infinity.
\end{Theorem}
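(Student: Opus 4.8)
The plan is to prove Theorem~\ref{t:main2}, which has two assertions: a classification statement (identifying all almost simple primitive $(G,H)$ with $\kappa(G)\leqs 2$) and an asymptotic statement ($\kappa(G)\to\infty$ as $|G|\to\infty$). These require somewhat different techniques, so I would treat them separately.

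For the classification, I would argue via the Classification of Finite Simple Groups, stratifying by the isomorphism type of the socle $T$ of $G$. First I would dispose of the sporadic and exceptional-Lie-type cases, and the alternating/symmetric groups, by reducing to a finite list of small possibilities and then appealing to direct computation (for instance in \textsf{GAP} or \textsf{Magma}, using character tables and the known maximal subgroups to compute $\kappa(G)$ explicitly). The substantive work is the case where $T$ is a classical group. Here the key idea is that to show $\kappa(G)\geqs 3$ it suffices to exhibit three conjugacy classes of derangements, and to produce derangements I would use the fact that an element $x$ is a derangement precisely when $x^G\cap H=\emptyset$, i.e. when no conjugate of $x$ lies in the point stabiliser $H$. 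A productive strategy is to locate elements whose order, or whose characteristic-polynomial / eigenvalue data, cannot occur in any maximal subgroup of the relevant Aschbacher class; primitive prime divisors (Zsygmondy primes) of $q^i-1$ give elements of large order that are derangements in many natural actions, and by choosing several independent such primes one manufactures several distinct derangement classes. The main obstacle will be the small-rank and small-$q$ classical groups (and the borderline low-degree actions such as $A_5$ on cosets of $D_{10}$ and $\mathrm{L}_2(8){:}3$), where the Zsygmondy-prime machinery is not available and one must instead reason delicately about the maximal subgroups and fusion to pin down $\kappa(G)$ exactly and match against Table~\ref{t:kappa}.

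For the asymptotic statement, the strategy is to bound $\kappa(G)$ below by a quantity that provably tends to infinity with $|G|$. The cleanest route is again through primitive prime divisors: for $T$ a group of Lie type over $\F_q$ of rank $r$, as $|G|\to\infty$ either $q$ or $r$ grows, and in each regime one can guarantee an increasing supply of Zsygmondy primes dividing $|T|$, each yielding elements of distinct prime order that are derangements (an element of prime order $p$ is a derangement whenever $p$ divides $|G|$ but the relevant point stabiliser contains no element of order $p$, which one controls via orders of maximal subgroups). For the alternating and symmetric cases one argues directly: the number of cycle types of derangements in $S_n$ or $A_n$ in its natural-type actions grows with $n$, and similar elementary counting handles the product-action and other primitive actions. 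I would organise this as: (i) for each family, produce a lower bound $\kappa(G)\geqs f(T)$ with $f(T)\to\infty$; (ii) observe that only finitely many $T$ violate any fixed bound, so the sporadic and small exceptions are irrelevant to the limit.

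The hard part, and where I expect to spend most of the effort, is the classical-groups classification: controlling conjugacy-class fusion between $T$ and the almost simple overgroup $G=\langle T,\ldots\rangle$, handling the full lattice of maximal subgroups (all Aschbacher classes $\mathcal{C}_1,\ldots,\mathcal{C}_8$ together with the class $\mathcal{S}$ of almost simple subgroups acting irreducibly), and ruling out, case by case, that a candidate element fails to be a derangement because it happens to be conjugate into some subgroup in one of these classes. The interplay between $G$ and $\mathrm{Out}(T)$ (diagonal, field, and graph automorphisms) complicates the fusion analysis and must be tracked carefully to get the count $\kappa(G)$ exactly right rather than merely bounded, which is what the ``$\leqs 2$'' threshold demands.
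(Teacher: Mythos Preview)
Your overall stratification via CFSG and the plan to handle sporadic, alternating and small cases by direct computation matches the paper. However, there is a genuine gap in your asymptotic argument, and your classical-groups strategy differs in an important way from what the paper actually does.

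\textbf{The asymptotic gap.} You propose to show $\kappa(G)\to\infty$ by exhibiting an ``increasing supply of Zsygmondy primes'' and taking one derangement class of each such prime order. This fails when the Lie rank is fixed and $q\to\infty$. For example, take $S={\rm L}_2(q)$ with $q$ large and $H$ a Borel subgroup: the primes dividing $|S|$ but not $|H|$ are exactly the odd prime divisors of $q+1$, and there may be only one of these (e.g.\ whenever $(q+1)/(2,q-1)$ is prime). More generally, for a rank-$r$ group the relevant exponents $i$ with $q^i-1\mid |S|$ are bounded in terms of $r$, so the number of \emph{distinct} Zsygmondy primes is bounded independently of $q$. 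Thus ``many primes, one class each'' cannot force $\kappa(G)\to\infty$ in the fixed-rank regime.

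\textbf{What the paper does instead.} The paper does not count primes; it counts $\Aut(S)$-classes of elements of a \emph{single} well-chosen order. Concretely, one picks a self-centralising regular semisimple element $x\in S$ generating a cyclic maximal torus (so $|x|$ is of size roughly $q^{r}$), and uses the elementary bound
\[
\#\{\text{$A$-classes of elements of order $|x|$}\}\ \geqs\ \frac{\phi(|x|)}{|\Normalizer_S(\langle x\rangle):\langle x\rangle|\cdot|\Out(S)|},
\]
together with $\phi(n)\geqs\sqrt{n/2}$. Since $|x|$ grows polynomially in $q$ while the denominator grows only like $r\log q$, this quantity tends to infinity with $q$ and is already $\geqs 3$ outside a short explicit list. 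The same computation thus delivers the classification and the asymptotic simultaneously. The remaining work is not a tour through all Aschbacher classes, but an appeal to existing ``maximal overgroup'' results (Weigel, Guralnick--Kantor, Breuer--Guralnick--Kantor, Bereczky, GPPS) which say that such an $x$ lies in at most one or two isomorphism types of maximal subgroups of $S$; for those few $H$ one switches to a second torus element $x_2$ with $|x_2|\nmid |H|$ and repeats the $\phi$-count. Your plan to run the full $\mathcal{C}_1,\ldots,\mathcal{C}_8,\mathcal{S}$ analysis and track fusion under $\Out(T)$ is much heavier than necessary; working at the level of $A=\Aut(S)$ from the start (three $A$-classes of derangements in $S$ suffice for every $G$ between $S$ and $A$) eliminates most of the fusion bookkeeping you anticipate.
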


Note that in Table \ref{t:kappa}, we write $G=A_5$ rather than ${\rm L}_{2}(4)$ or ${\rm
L}_{2}(5)$. Similarly, we write $G=A_6$ rather than ${\rm L}_{2}(9)$ or ${\rm PSp}_{4}(2)'$, etc.
In addition, in the final row we write $[16]$ to denote a Sylow $2$-subgroup of ${\rm M}_{10} = A_6\cdot 2$.

\renewcommand{\arraystretch}{1.2}
\begin{table}
$$\begin{array}{cl} \hline
\kappa(G) & (G,H) \\ \hline
1 & (A_5,D_{10}),\; ({\rm L}_{2}(8){:}3, D_{18}{:}3) \\
2 & (A_5, D_6),\, (A_5, A_4),\, (S_5,D_{12}),\, (S_5,S_4),\, (A_6,3^2{:}4),\, (A_6,A_5),\, (S_6,3^2{:}D_8) \\
& ({\rm M}_{10},[16]),\, ({\rm L}_{2}(7), 7{:}3),\,({\rm L}_{2}(7),S_4),\, ({\rm L}_{3}(4),2^4{:}A_5),\, ({}^2B_2(8){:}3, 5{:}4 \times 3) \\ \hline
\end{array}$$
\caption{$\kappa(G)<3$, $G$ almost simple primitive}
\label{t:kappa}
\end{table}
\renewcommand{\arraystretch}{1}

By considering the cases in Table \ref{t:kappa}, we easily deduce the following corollary.

\begin{Corollary}\label{c:main_as}
Let $G$ be a finite almost simple transitive permutation group with point stabiliser $H$. Assume
$G$ is imprimitive. Then either $\kappa(G) \geqs 3$, or $\kappa(G) = 2$ and $(G,H) = (A_5,\Z_5)$.
\end{Corollary}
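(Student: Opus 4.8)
The plan is to deduce the corollary from Theorem \ref{t:main2} by combining a monotonicity principle for derangement classes with a short, finite subgroup check. Suppose $G$ is almost simple and transitive on $\Omega = G/H$, with $H$ core-free (the action being faithful) and non-maximal, so that the action is imprimitive. Since $G$ is finite and $H$ is proper, I would first fix a maximal subgroup $M$ with $H < M < G$.

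The first key step is monotonicity under the $G$-equivariant projection $G/H \to G/M$. If $x \in G$ fixes a point $gH$ of $G/H$, then $g^{-1}xg \in H \leqs M$, so $x$ also fixes $gM$; contrapositively, every derangement for the action on $G/M$ is a derangement for the action on $G/H$. As both derangement sets are normal subsets (unions of $G$-classes), the number of derangement classes for $G/H$ is at least that for the primitive action on $G/M$. In particular, if $G$ on $G/M$ has at least three classes of derangements then so does $G$ on $G/H$, and we are done. It therefore remains to treat the case where $G$ on $G/M$ has at most two derangement classes; since $G/M$ is primitive and $G$ is almost simple, Theorem \ref{t:main2} forces $(G,M)$ to be one of the finitely many pairs recorded in Table \ref{t:kappa}.

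For each such pair I would run through the core-free subgroups $H < M$ and compute $\kappa(G)$ for the action on $G/H$ directly, by determining which $G$-classes meet $H$ (equivalently, fix a point) and counting the complementary classes; this requires only the class fusion of $H$ into $G$. For the $\kappa=2$ pairs of the table one already has at least two derangement classes by the monotonicity step, so it suffices to exhibit a single further $G$-class meeting $M$ but avoiding $H$, which I expect to hold in every instance (whenever $H$ is a proper subgroup of $M$, it typically fails to meet some $G$-class that $M$ meets). For the two $\kappa=1$ pairs a finer count is needed. The decisive one is $(A_5,D_{10})$: its proper core-free subgroups are $\Z_5$, $\Z_2$ and the trivial subgroup, and since $\Z_5$ is generated by a $5$-cycle $c$ whose square $c^2$ lies in the other $A_5$-class of $5$-cycles, $\Z_5$ meets both $5$-cycle classes as well as the identity, leaving exactly the involution class and the $3$-cycle class as derangements; hence $\kappa(G)=2$ for $(G,H)=(A_5,\Z_5)$, while $\Z_2$ and the trivial subgroup give $3$ and $4$ classes respectively. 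An analogous computation handles $({\rm L}_2(8){:}3, D_{18}{:}3)$, where every proper core-free subgroup yields $\kappa(G)\geqs 3$.

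The main obstacle is the finite verification for the larger groups in Table \ref{t:kappa}, in particular $S_6$, ${\rm M}_{10}$, ${\rm L}_3(4)$ and ${}^2B_2(8){:}3$, where $M$ has many subgroups and the class fusion must be tracked carefully; this is where a computer-algebra check (in \textsf{GAP} or \textsc{Magma}) is most convenient. The expected outcome of this bookkeeping is that every imprimitive almost simple $(G,H)$ satisfies $\kappa(G)\geqs 3$, with the unique exception $(A_5,\Z_5)$ arising from the $\kappa=1$ pair $(A_5,D_{10})$ and giving $\kappa(G)=2$, as required.
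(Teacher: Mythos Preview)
Your approach is correct and matches the paper's: the paper simply states that the corollary is ``easily deduced from Theorem \ref{t:main2}'' by ``considering the cases in Table \ref{t:kappa}'', which amounts precisely to your monotonicity reduction $\kappa(G,H) \geqs \kappa(G,M)$ for $H<M$ maximal, followed by the finite check of core-free subgroups of the point stabilisers $M$ appearing in Table \ref{t:kappa}. Your treatment of $(A_5,\Z_5)$ is accurate, and the remaining verifications are routine.
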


We also investigate the structure of a general primitive permutation group $G$ with $\kappa(G)=2$. A version of our main result is Theorem \ref{t:main3} below (see Section \ref{s:2classes} for more details). Hering's classification \cite{Hering, Hering2} of the $2$-transitive affine permutation groups is a key tool in the proof. 

\begin{Theorem}\label{t:main3}
Let $G$ be a finite primitive permutation group of degree $n$ with point stabiliser $H$. If 
$\kappa(G)=2$, then one of the following holds:
\begin{itemize}\addtolength{\itemsep}{0.2\baselineskip}
\item[{\rm (i)}] $(G,n) = (\Z_3,3)$;
\item[{\rm (ii)}] $G$ is one of the almost simple groups recorded in Table \ref{t:kappa};
\item[{\rm (iii)}] $G=HN$ is an affine group, where $N$ is an elementary abelian $p$-group of order $n=p^k$, and one of the following holds:

\vspace{1mm}

\begin{itemize}\addtolength{\itemsep}{0.2\baselineskip}
\item[{\rm (a)}] $G$ is a Frobenius group with kernel $N$, $p$ is odd and $|H|=(n-1)/2$;
\item[{\rm (b)}] $G$ is a non-Frobenius $2$-transitive group, and either $G$ is recorded in Table \ref{tab:b}, or $G$ is soluble, $H \leqs {\rm \Gamma L}_{1}(p^k)$, $k$ is even and $|H|=2(n-1)$.
\end{itemize}
\end{itemize}
Moreover, any group $G$ as in {\rm (i)}, {\rm (ii)}, {\rm (iii)(a)} or Table \ref{tab:b} has the property $\kappa(G)=2$.
\end{Theorem}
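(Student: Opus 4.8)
The plan is to run through the O'Nan--Scott types of the primitive group $G$, treating the almost simple and affine cases as the two substantial ones and eliminating the remainder. For the almost simple type there is nothing new to do: Theorem \ref{t:main2} already lists every almost simple primitive group with $\kappa(G)<3$, and its $\kappa(G)=2$ entries are exactly case (ii) (Table \ref{t:kappa}). For the product, diagonal, twisted-wreath and holomorph-of-simple types, I would show $\kappa(G)\geqs 3$ directly from the socle structure: when $\mathrm{soc}(G)=T^\ell$ with $T$ nonabelian simple, the point stabiliser is comparatively small and one can exhibit three pairwise non-conjugate derangements by combining fixed-point-free elements of the simple factors (for product type) or using the diagonal embedding (for diagonal type), so none of these types contributes. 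This leaves the affine type, together with the degenerate regular case $H=1$: there $G=N$ is elementary abelian of order $n$, every nonidentity element is its own class and a derangement, so $\kappa(G)=n-1$, which equals $2$ only for $(G,n)=(\Z_3,3)$, giving (i).

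The affine analysis rests on one elementary observation. Writing $G=N\rtimes H$ with $N=\F_p^k$ and $H\leqs\GL_k(p)$, the element $(v,A)$ is a derangement if and only if $v\notin\mathrm{im}(A-1)$; consequently every nonidentity translation is a derangement, these split into the $H$-orbits on $N\setminus\{0\}$, and a derangement with $A\neq 1$ exists precisely when some nonidentity $A\in H$ satisfies $\kernel(A-1)\neq 0$, i.e. precisely when $G$ is not Frobenius with kernel $N$. Hence if $G$ is Frobenius with kernel $N$ then $\kappa(G)$ is the number of $H$-orbits on $N\setminus\{0\}$, which (the action being semiregular) equals $(n-1)/|H|$; this is $2$ exactly when $|H|=(n-1)/2$, forcing $n$ and hence $p$ odd, which is case (iii)(a). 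If $G$ is not Frobenius then there is at least one derangement class with $A\neq1$, so the translations may contribute only one class; thus $H$ must be transitive on $N\setminus\{0\}$ and $G$ is $2$-transitive, landing in (iii)(b). (If $G$ were neither Frobenius nor $2$-transitive we would obtain at least two translation classes plus at least one more, so $\kappa(G)\geqs 3$.)

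For the $2$-transitive affine groups I would invoke Hering's theorem \cite{Hering, Hering2} to reduce to a finite list of possibilities for $H$, and for each compute the number of derangement classes with nontrivial linear part. For a fixed $H$-class of $A\neq1$ this number is the number of $\Centralizer_H(A)$-orbits on the nonzero cosets of $N/\mathrm{im}(A-1)$, so summing over the classes of $A$ with $\kernel(A-1)\neq0$ gives the contribution beyond the single translation class; $\kappa(G)=2$ forces this total to equal $1$. Running this computation across Hering's families yields Table \ref{tab:b} together with the ${\rm \Gamma L}_{1}$-subcase: when $H\leqs{\rm \Gamma L}_{1}(p^k)$ the scalar subgroup acts freely, so only the genuinely semilinear elements contribute, and requiring a single extra class pins down the Galois part of $H$ to order $2$, whence $k$ is even, $|H|=2(n-1)$ and $H$ is soluble.

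For the converse (the ``Moreover'' clause), cases (i) and (iii)(a) have already been verified above, case (ii) is immediate from Theorem \ref{t:main2}, and for Table \ref{tab:b} one checks entry by entry that the single count above indeed produces exactly one extra derangement class. I expect the main obstacle to be precisely this uniform control of the $\Centralizer_H(A)$-orbit counts on $N/\mathrm{im}(A-1)$ across all of Hering's families, since the delicate examples (the classical and exceptional groups, and the extraspecial-normaliser cases) require detailed knowledge of the fusion of the fixed-space-bearing elements and of their centralisers. Note finally that the ${\rm \Gamma L}_{1}$-family is deliberately excluded from the converse, as the necessary conditions $k$ even and $|H|=2(n-1)$ are not sufficient to guarantee $\kappa(G)=2$.
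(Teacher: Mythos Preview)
Your outline is broadly correct and shares the paper's architecture: reduce via the socle to the almost simple and affine types, quote Theorem~\ref{t:main2} for the former, split the affine case into Frobenius versus non-Frobenius $2$-transitive, and invoke Hering for the latter. Your explicit derangement criterion $(v,A)\in\Delta(G)\Longleftrightarrow v\notin\mathrm{im}(A-1)$ is a clean organising device; the paper uses only a special case of it (Lemma~\ref{l:2ta2}(i), with $[N,H_p]$ in the role of $\mathrm{im}(A-1)$ for $p$-elements) and your derivation of the Frobenius/$2$-transitive dichotomy from it is slicker than the paper's Lemma~\ref{l:frob}. Where the approaches genuinely diverge is the Hering analysis: rather than your direct $\Centralizer_H(A)$-orbit count on $N/\mathrm{im}(A-1)$, the paper proves two structural constraints (Lemmas~\ref{l:2ta1} and~\ref{l:2ta2}) showing that $|\Centralizer_H(x)|=p^br^c$ for at most one prime $r\neq p$, and that if $p$ divides $|H|$ then a Sylow $p$-subgroup of $H$ has exponent $p$ with all nontrivial elements $H$-conjugate. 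These yield the arithmetic criterion that $|H|_{p'}/(p^k-1)$ is a prime power, which disposes of $\SL_a(q)$ ($a\geqs3$), $\Sp_a(q)$, $G_2(q)'$ and most sporadic cases by inspection, without computing a single derangement class. Your route would work but is heavier.

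Two places where your sketch is too thin to stand as written. In product action with socle $S\times S$, ``combining fixed-point-free elements'' yields only $(u,1)$ and $(u,u)$ when $S$ has a unique $L$-class of derangements on $\Gamma$; the paper (proof of Proposition~\ref{p:red2}) has to pass to a maximal subgroup of $S\times S$ containing $H\cap N$, invoke Lemma~\ref{l:normal}, and use the classification of maximal subgroups of a direct product to manufacture a third class. Second, your $\Gamma{\rm L}_1$ paragraph is over-compressed: ``pins down the Galois part to order $2$'' is not how the argument runs. The paper's Proposition~\ref{p:2ta2} must separate the cases $p\mid|\Centralizer_H(x)|$ and $p\nmid|\Centralizer_H(x)|$, and in the latter further split according to whether the relevant order-$r$ element lies in every index-$r$ subgroup of $H$ containing $H\cap\GL_1(p^k)$; the conclusion $|H|=2(p^k-1)$ with $k$ even emerges from a genuine case analysis, not from a one-line observation about the Galois part.
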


\renewcommand{\arraystretch}{1.2}
\begin{table}
$$\begin{array}{lll} \hline
n & G &   \\ \hline
2^2 & 2^2{:}S_3 \cong S_4 & \mathcal{P}(4,2) \\
5^2 & 5^2{:}(2^{1+2}.6) & \mathcal{P}(5^2,17) \\
11^2 & 11^2{:}(2^{1+2}.[30]) & \mathcal{P}(11^2,42) \\
3^4 & 3^4{:}((2 \times Q_8){:}2){:}5 & \mathcal{P}(3^4,70) \\
29^2 & 29^2{:}(7 \times 2.{\rm SL}_{2}(5)) & \mathcal{P}(29^2,104) \\ \hline
\end{array}$$
\caption{Some affine $2$-transitive groups $G$ with $\kappa(G)=2$}
\label{tab:b}
\end{table}
\renewcommand{\arraystretch}{1}

\begin{Remark}
\emph{Let us make a couple of remarks on the statement of Theorem \ref{t:main3}.
\begin{itemize}\addtolength{\itemsep}{0.2\baselineskip}
\item[{\rm (a)}] In Table \ref{tab:b} we use the notation $\mathcal{P}(n,i)$ to denote the $i$-th primitive permutation group of degree $n$ in the 
library of primitive groups stored in {\sc Magma} \cite{magma}, which can be accessed via the command \textsf{PrimitiveGroup}$(n,i)$. 
\item[(b)] Consider part (iii)(b) of Theorem \ref{t:main3}, where $H \leqs {\rm \Gamma L}_{1}(p^k)$, $k$ is even and $|H|=2(p^k-1)$. Here it is difficult to give a complete description of the possibilities for $G$ with the property $\kappa(G)=2$, but we can show that $\kappa(G)=2$ in the special case $H = {\rm GL}_{1}(p^k)\cdot 2$ (see Proposition \ref{p:new}).
\end{itemize}}
\end{Remark}


One of our main motivations stems from an application to the character theory of finite groups. Let
$G$ be a finite nonabelian group and let $\chi\in\Irr(G)$ be a nonlinear irreducible complex character of $G$.
A classical theorem of Burnside \cite[Theorem 3.15]{Isaacs} states that $\chi(x)=0$ for some $x\in G$. In this situation, we say that $\chi$ \emph{vanishes} at $x$, and $x$ is called a \emph{zero} of
$\chi$. Since $\chi$ is a class function, it vanishes on the conjugacy class $x^G$, and we write
$n(\chi)$ for the number of conjugacy classes of $G$ on which $\chi$ vanishes. Therefore, Burnside's theorem states that $n(\chi)\geqs 1$ for all nonlinear $\chi\in\Irr(G)$. In fact, by a theorem of Malle, Navarro and Olsson \cite{MNO}, $\chi$ vanishes on some element of prime power order, and it is interesting to note that their proof uses the aforementioned theorem of Fein, Kantor and Schacher \cite{FKS} on derangements in transitive permutation groups.

Several authors have investigated the structure of finite groups with a nonlinear irreducible
character $\chi$ such that $n(\chi)$ is small, and there has been particular interest in the special case
$n(\chi)=1$. For example, Zhmud' \cite{Zhmud} obtained partial results on the structure of soluble groups with this property. In later work, Chillag \cite[Corollary 2.4]{Chillag} proved that if $G \neq G'$ then either $G$ is a Frobenius group with an abelian odd-order kernel of index two, or $\chi$ is
irreducible upon restriction to $G'$. In fact, if $G$ is any finite nonabelian group such that $n(\chi) \leqs 1$ for all $\chi \in \Irr(G)$, then $G$ is a Frobenius group with an abelian odd-order kernel of index two (see \cite[Proposition 2.7]{Chillag}; the proof uses CFSG). See \cite{Dixon} and \cite{Qian} for additional structural results on soluble groups with this extremal property.

Let us consider the general case: $G$ is a finite nonabelian group with a nonlinear irreducible
character $\chi$ such that $n(\chi)=1$. 
Recall that $\chi\in\Irr(G)$ is \emph{imprimitive} if it can be
induced from a character of a proper subgroup of $G$, i.e., $\chi=\phi^G$ for some $\phi\in\Irr(H)$
and proper subgroup $H$ of $G$. Otherwise, $\chi$ is \emph{primitive}.

Suppose $\chi\in\Irr(G)$ is a nonlinear imprimitive irreducible character such that $n(\chi)=1$, say $\chi=\phi^G$ where $\phi\in\Irr(H)$ and $H$ is a proper
subgroup of $G$. Set
$$\Delta_H(G):=G\setminus \bigcup_{g\in G} H^g.$$
Clearly, by definition of the induced character $\phi^G$, if $x\in \Delta_H(G)$
then $\chi(x)=0$ and thus $\Delta_H(G)=x^G$. Note that the converse does not hold in general; the condition $\Delta_H(G)=x^G$ does not imply that there is a character $\phi \in \Irr(H)$ such that $\phi^G \in \Irr(G)$ and $n(\phi^G)=1$. For example, if $(G,H)=(A_5,D_{10})$ then
$\Delta_H(G)=x^G$ by Theorem \ref{t:main1}, but no character of $H$ can be irreducibly induced to $G$ since $|G:H|=6$ and $\chi(1) \leqs 5$ for all $\chi \in \Irr(G)$.

If we assume further that $H$ is core-free and maximal, then $G$ is a primitive permutation group on $\Omega=G/H$ with $\kappa(G)=1$, so in this situation the possibilities
for $G$ and $H$ are given by Theorem \ref{t:main1}. 

In general, the structure of $G$ can be more complicated. In Theorem \ref{t:main_app} below we  describe the normal structure of finite groups $G$ with the property that $n(\chi) = 1$ for some nonlinear imprimitive irreducible character $\chi = \phi^G$, where $\phi \in {\rm Irr}(H)$ 
for some maximal subgroup $H$ of $G$. 
In the statement of Theorem \ref{t:main_app}, recall that a finite group $G$ is a \emph{Camina group} if $|\Centralizer_G(x)| = |\Centralizer_{G/G'}(G'x)|$ for all $x \in G \setminus G'$.

\begin{Theorem}\label{t:main_app}
Let $H$ be a maximal subgroup of a finite group $G$ such that $n(\chi) = 1$ for a nonlinear imprimitive irreducible character $\chi = \phi^G$ with $\phi \in {\rm Irr}(H)$. Write $\Delta_H(G)=x^G$ and let $N=H_G$ denote the normal core of $H$. Then one of the following holds:
\begin{itemize}\addtolength{\itemsep}{0.2\baselineskip}
\item[{\rm (i)}] $G$ is a Frobenius group with an abelian odd-order kernel $H=G'$ of index two.
\item[{\rm (ii)}] $G/N$ is a $2$-transitive Frobenius group with an elementary abelian kernel $M/N$ of order $p^n$ for some prime $p$ and integer $n\geqs 1$, and a complement $H/N$ of
order $p^n-1$. Moreover, $x^G=M \setminus N$, $|\Centralizer_G(x)|=p^n$,  $|x^G|=|H|$, $M'=N$ and one of the following holds:

\vspace{1mm}

\begin{itemize}\addtolength{\itemsep}{0.2\baselineskip}
\item[{\rm (a)}] $M$ is a Frobenius group with kernel $M'$ and $p^n=p>2$.
\item[{\rm (b)}] $M$ is a Frobenius group with kernel $K\normeq G$ such that $G/K\cong \SL_2(3)$ and $M/K\cong Q_8$.
\item[{\rm (c)}] $M$ is a Camina $p$-group.
\end{itemize}

\item[{\rm (iii)}] $G/N\cong {\rm L}_2(8){:}3$, $H/N\cong D_{18}{:}3$, $N$ is a nilpotent $7'$-group and $\Centralizer_G(x)=\la x\ra \cong \Z_7$.

\item[{\rm (iv)}] $G/N\cong A_5$, $H/N\cong D_{10}$, $N$ is a $2$-group and $\Centralizer_G(x)=\la x\ra \cong \Z_3$.
\end{itemize}
In particular, if $G=G'$ then either case {\rm (ii)(c)} holds with $p^n=11^2$ and $G/N\cong
11^2{:}\SL_2(5)$, or case {\rm (iv)} holds.
\end{Theorem}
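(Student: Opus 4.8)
The plan is to reduce to the primitive classification in Theorem \ref{t:main1} and then read off the normal structure of $G$ case by case. First I would record that $\Delta_H(G)=x^G$: since $\chi=\phi^G$ vanishes on every element outside $\bigcup_{g}H^g$, every derangement is a zero of $\chi$, and as $\Delta_H(G)$ is a nonempty normal subset while $n(\chi)=1$, these two normal subsets coincide. Because $H$ is maximal with core $N=H_G$, the quotient $G/N$ acts primitively on $G/H$ with core-free point stabiliser $H/N$, and $g\in G$ is a derangement precisely when $gN$ is a derangement for $G/N$; thus $\Delta_H(G)$ is the full preimage of the set of derangements of $G/N$. A single $G$-class upstairs maps onto a single $(G/N)$-class downstairs, so $\kappa(G/N)=1$, and Theorem \ref{t:main1} forces $G/N$ to be sharply $2$-transitive, or $(A_5,D_{10})$, or $({\rm L}_2(8){:}3,D_{18}{:}3)$.

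The sharply $2$-transitive case carries the bulk of the work and yields (i) and (ii). If $G/N=S_2$ then $H=N\normeq G$ has index $2$ and $G\setminus H=x^G$; a count gives $|\Centralizer_G(x)|=2$, so $x$ is an involution acting fixed-point-freely and hence inverting $H$, whence $H$ is abelian of odd order and $G$ is Frobenius with $H=G'$ — this is (i). Otherwise $G/N$ is Frobenius with elementary abelian kernel $M/N$ of order $p^n>2$ and complement $H/N$ of order $p^n-1$; let $M\normeq G$ be the preimage of the kernel. The derangements of $G/N$ are exactly the nonidentity kernel elements, so $x^G=M\setminus N$, and counting gives $|x^G|=|H|$ and $|\Centralizer_G(x)|=p^n$; moreover any $g\in\Centralizer_G(x)$ maps into $\Centralizer_{G/N}(\bar x)=M/N$, so $\Centralizer_G(x)\leqs M$. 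Since all elements of $M\setminus N$ are $G$-conjugate, each has centraliser of order $p^n$ in $M$, and as $y^M\subseteq yN$ with $|y^M|=|M|/p^n=|N|$, every nonidentity coset satisfies $y^M=yN$. Hence $(M,N)$ is a Camina pair; the commutators $[y,m]$ then exhaust $N$, giving $M'=N$, so $M$ is a Camina group. The classification of finite Camina groups (Dark and Scoppola) then splits $M$ into a Camina $p$-group (ii)(c); a Frobenius group with cyclic complement $M/N\cong\Z_p$, forcing $p^n=p>2$ (ii)(a); or a Frobenius group with complement $Q_8$, where $M/N\cong\Z_2\times\Z_2$, $p^n=4$ and $G/K\cong\SL_2(3)$ (ii)(b).

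For the two almost simple possibilities I would identify the unique derangement class of $G/N$ by inspection: in $A_5$ on $D_{10}$ it is the $20$ elements of order $3$, and in ${\rm L}_2(8){:}3$ on $D_{18}{:}3$ it is the single class of $216$ elements of order $7$. Comparing $|x^G|=|G|/|\Centralizer_G(x)|$ with $|\Delta_H(G)|=|N|\cdot|\bar x^{G/N}|$ gives $|\Centralizer_G(x)|=3$ and $7$ respectively, so $\Centralizer_G(x)=\la x\ra$ has prime order and acts fixed-point-freely on $N$; by Thompson's theorem $N$ is nilpotent, and the prime in question cannot divide $|N|$. For ${\rm L}_2(8){:}3$ this already delivers the nilpotent $7'$-group of (iii). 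The genuinely delicate point is the stronger claim in (iv) that $N$ is a $2$-group: a fixed-point-free automorphism of order $3$ can act on, say, $\Z_5\times\Z_5$, so nilpotency together with the exclusion of the prime $3$ is not enough. Here one must bring in that $\chi$ vanishes only on $x^G$ — so $\chi$ is nonvanishing on $N$ and on all lifts of $5$-elements of $A_5$ — and combine this with the $A_5$-action on the odd part of $N$ to rule out every odd prime. I expect this to be the main obstacle, needing a Clifford-theoretic analysis of $\chi|_N$ rather than a purely arithmetic count.

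Finally, the concluding clause follows by testing each case against $G=G'$, which forces the quotient $G/N$ to be perfect. The quotients in (i), (ii)(a), (ii)(b) and (iii) are Frobenius groups or ${\rm L}_2(8){:}3$ with nontrivial abelianisation, hence non-perfect, and are excluded. In (ii)(c) the quotient $G/N=(M/N)\rtimes(H/N)$ has $(G/N)^{\mathrm{ab}}\cong(H/N)^{\mathrm{ab}}$, because $H/N$ acts fixed-point-freely on $M/N$; so $G/N$ is perfect if and only if the Frobenius complement $H/N$ is perfect, which happens only for $H/N\cong\SL_2(5)$. Sharp $2$-transitivity then gives $p^n-1=|\SL_2(5)|=120$, so $p^n=11^2$ and $G/N\cong 11^2{:}\SL_2(5)$. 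In (iv), $G/N\cong A_5$ is already perfect. Thus $G=G'$ leaves exactly the two stated alternatives.
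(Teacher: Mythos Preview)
Your reduction to Theorem \ref{t:main1} via $\kappa(G/N)=1$, and your treatment of the sharply $2$-transitive case (including the Camina analysis of $M$) and of case (iii), match the paper's argument essentially line for line.

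The one genuine gap is exactly where you flag it, but your diagnosis of the difficulty is off. You say that forcing $N$ to be a $2$-group in case (iv) ``must bring in that $\chi$ vanishes only on $x^G$'' and will need Clifford theory. In fact no character theory is required: the paper proves the entire result under the weaker hypothesis $\Delta_H(G)=x^G$ alone (see Remark \ref{rem:main}(c)). The missing ingredient is the 1962 Feit--Thompson theorem on finite groups containing a self-centralising subgroup of order $3$: such a group has a normal $2$-subgroup $N_0$ with quotient isomorphic to $\Z_3$, $S_3$ or $A_5$. You have already shown $\Centralizer_G(x)=\la x\ra\cong\Z_3$, so this applies to $G$ itself. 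Comparing $|G|=60|N|$ with $|G|\in\{3|N_0|,6|N_0|,60|N_0|\}$ and $|N_0|$ a power of $2$ forces $G/N_0\cong A_5$ and $|N_0|=|N|$; then $N_0N/N$ is a normal $2$-subgroup of the simple group $A_5$, so $N_0\leqs N$ and hence $N=N_0$ is a $2$-group. Thompson's theorem, which you invoke, only gives nilpotency of $N$ and is indeed not enough on its own (your $\Z_5\times\Z_5$ example is apt); the paper uses it for case (iii) but switches to Feit--Thompson for (iv).
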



\begin{Remark}\label{rem:main}
\emph{Let us make some remarks on the statement of Theorem \ref{t:main_app}.
\begin{itemize}\addtolength{\itemsep}{0.2\baselineskip}
\item[{\rm (a)}] Firstly, observe that there is no loss in assuming that $H$ is a maximal subgroup of $G$. Indeed, if $n(\chi)=1$ and 
$\chi=\lambda^G$ for some $\lambda\in\Irr(J)$ and proper subgroup $J<G$, then 
$\chi=(\lambda^H)^G=\phi^G$, whenever $J \leqs H< G$ with $\phi=\lambda^H\in\Irr(H)$.  
\item[{\rm(b)}] For imprimitive characters, Theorem \ref{t:main_app} extends several known results in the literature. For example, the conclusion in part (i) coincides with the first part of \cite[Corollary 2.4]{Chillag}, and parts (i) and (ii)(a,b) are exactly the conclusions (1)-(3) in \cite[Theorem 1.1]{Qian} (see also \cite[Theorem 9]{Dixon}). It is worth noting that the relevant  results in \cite{Dixon, Qian} only apply in the case $G$ is soluble, whereas Theorem \ref{t:main_app} holds for any finite group $G$.
\item[{\rm(c)}] In Section \ref{s:zeros} we prove Theorem \ref{t:main_app} under a weaker assumption, namely, we only require that $G$ is a finite subgroup with a maximal subgroup $H$ such that $\Delta_H(G)=x^G$ for some $x \in G$. 
\item[{\rm (d)}] In parts (iii) and (iv), we note that the core $N=H_G$ is nontrivial since the index $|G:H|$ is larger than any character degree of $G/N$.
\item[{\rm(e)}] This structure theorem is an important step towards a complete classification of the finite groups with a nonlinear irreducible character that vanishes on a unique conjugacy class. Indeed, in a forthcoming paper, we study the structure of the groups arising in parts (ii)(c), (iii) and (iv) in more detail, and we will also consider the primitive case in future work.
\end{itemize}}
\end{Remark}

\vs

Finally, let us make some comments on the notation and organisation of the paper. Our group-theoretic notation is fairly standard. In particular, we use the notation of Kleidman and Liebeck \cite{KL} for simple groups and their automorphism groups; for example, we write ${\rm L}_{n}(q)$ and ${\rm U}_{n}(q)$ for ${\rm PSL}_{n}(q)$ and ${\rm PSU}_{n}(q)$, respectively. We use $\Z_n$, or just $n$, to denote a cyclic group of order $n$, and $(a,b)$ denotes the highest common factor of the positive integers $a$ and $b$.

In Section \ref{s:red} we establish a useful result that immediately reduces the proof of Theorem \ref{t:main1} to almost simple groups. We focus on the almost simple groups in Section \ref{s:as}, where we complete the proofs of Theorem \ref{t:main1} and \ref{t:main2}. The structure of the primitive groups $G$ with $\kappa(G)=2$ is investigated in Section \ref{s:2classes}, and we establish Theorem \ref{t:main3}. Finally, in Section \ref{s:zeros} we prove Theorem \ref{t:main_app} on the finite groups $G$ with a maximal subgroup $H$ and a nonlinear imprimitive irreducible character $\chi = \phi^G$ such that $n(\chi)=1$ and $\phi \in {\rm Irr}(H)$.

\section{A reduction theorem}\label{s:red}
Let $G \leqs {\rm Sym}(\Omega)$ be a transitive permutation group of degree $n$ with point
stabiliser $H = G_{\a}$. Recall that $G$ is a \emph{Frobenius group} if $G$ is not regular and only
the identity element has more than one fixed point (equivalently, $H \neq 1$ and $H \cap H^g = 1$
for all $g \in G \setminus H$). In this situation, $N := \{1\}\cup \Delta(G)$ is a regular normal
subgroup of $G$ (see \cite[Theorem 7.2]{Isaacs}, for example) and we have $G=HN$ and $H \cap N = 1$
(we call $N$ the \emph{Frobenius kernel} of $G$). Since $H$ acts semiregularly on $\O \setminus
\{\a\}$ it follows that $|G|=n(n-1)/d$, where $d$ divides $n-1$ ($d$ is the number of $H$-orbits on $\O \setminus \{\a\}$). If $G$ is $2$-transitive, i.e., if $H$
acts transitively on $\O \setminus \{\a\}$, then $d=1$ and it follows that any two nontrivial
elements of $N$ are conjugate in $G$ (so $N$ is an elementary abelian $p$-group for some prime $p$,
and $n$ is a power of $p$). In particular, if $G$ is a $2$-transitive Frobenius group then
$\kappa(G)=1$.

Also recall that $G$ is \emph{sharply $2$-transitive} if $G$ acts regularly on the set of pairs of
distinct elements of $\O$ (so $G$ is $2$-transitive and no nontrivial element of $G$ fixes more
than one point). In particular, $G$ is sharply $2$-transitive if and only if $(G,n) = (S_2,2)$ or
$G$ is a $2$-transitive Frobenius group. As noted above, the latter groups are precisely the
Frobenius groups of order $n(n-1)$ with $n$ a prime power.

Given a group $X$, we write $X^* = X \setminus \{1\}$ for the set of nontrivial elements in $X$.

\begin{theorem}\label{t:red}
Let $G \leqs {\rm Sym}(\Omega)$ be a finite primitive permutation group and assume $G$ is not
almost simple. Then $\kappa(G)=1$ if and only if $G$ is sharply $2$-transitive.
\end{theorem}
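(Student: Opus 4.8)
The plan is to analyse the socle $N=\mathrm{soc}(G)$ via the O'Nan--Scott theorem, treating the abelian (affine) case directly and showing $\kappa(G)\geqs 2$ in every non-affine case. The reverse implication is immediate from the discussion preceding the theorem: a sharply $2$-transitive group is either $(S_2,2)$, whose unique derangement is the transposition, or a $2$-transitive Frobenius group, for which $\kappa(G)=1$ was noted above. So assume $\kappa(G)=1$; since $G$ is not almost simple, $N$ is either elementary abelian and regular, or a direct power $N=T^{k}$ of a nonabelian simple group $T$ with $k\geqs 2$.

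First I would handle the affine case, identifying $\Omega$ with $N$ and writing $G=N{\rtimes}H$ with $H\leqs \GL(N)$ irreducible. An element $(v,h)$ fixes $w\in N$ precisely when $v\in\mathrm{Im}(1-h)$, so $(v,1)$ is a derangement exactly for $v\neq 0$; thus $N^{*}\subseteq\Delta(G)$. Since $N$ is abelian, $G$-conjugacy on $N$ coincides with the $H$-action, so $\kappa(G)=1$ forces $N^{*}$ to be a single $H$-orbit, i.e. $G$ is $2$-transitive. If some $h\neq 1$ fixed a nonzero vector then $1-h$ would be singular and $(v,h)$ would be a derangement for a suitable $v$; as conjugacy preserves the image in $G/N\cong H$, this would produce a derangement outside the class of $N^{*}$, contradicting $\kappa(G)=1$. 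Hence $H$ acts semiregularly, and being transitive it acts regularly, on $N^{*}$; therefore $G$ is a $2$-transitive Frobenius group, i.e. sharply $2$-transitive.

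It remains to show that $\kappa(G)\geqs 2$ when $N=T^{k}$ is nonabelian, which I would do by exhibiting two non-conjugate derangements inside $N$ in each O'Nan--Scott type. If $N$ is regular (the twisted wreath case) then every element of $N^{*}$ is a derangement, and two elements of distinct prime orders dividing $|T|$ give the claim. In the diagonal cases (types HS, HC, SD, CD) the stabiliser $N\cap H$ is a product of full diagonal subgroups of the factors, and one checks directly that $(t,1,\dots,1)$ is a derangement for every $t\neq 1$ (a tuple fixes a point only if its entries are suitably $T$-conjugate within each diagonal block); so again two elements $t$ of distinct prime orders do the job. In the product action case (type PA) with $\Omega=\Gamma^{k}$, the element $(t_1,\dots,t_k)$ is a derangement if and only if some $t_i$ is a derangement of $T$ on $\Gamma$; Jordan's theorem supplies such an $s\in T$, and then $(s,1,\dots,1)$ and $(s,s,1,\dots,1)$ are derangements in $N$ which are non-conjugate in $G\leqs \mathrm{Sym}(\Gamma)\wr S_k$ because they have different numbers of nontrivial coordinates. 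In every case $\kappa(G)\geqs 2$, the desired contradiction.

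The main obstacle will be the product action case: unlike the diagonal types, a single factor need not contain an ``obvious'' derangement, so one must invoke Jordan's theorem to produce $s\in T$ and then argue carefully that the number of nontrivial coordinates is a conjugacy invariant of $\mathrm{Sym}(\Gamma)\wr S_k$ in order to separate the two classes. The affine analysis also requires care to extract both conclusions---$2$-transitivity from the single-orbit condition and the Frobenius property from the absence of derangements with nontrivial image in $H$---so as to land exactly on sharp $2$-transitivity rather than mere $2$-transitivity.
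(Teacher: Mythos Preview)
Your proof is correct, but it is organised differently from the paper's and uses different tools in two places. The paper works with a minimal normal subgroup $N$ and splits into the cases $N$ regular versus $H\cap N\neq 1$, whereas you take $N=\mathrm{soc}(G)$ and run through the O'Nan--Scott types. In the affine case, the paper invokes the Cameron--Cohen bound $|\Delta(G)|\geqs |H|$ (with equality forcing sharp $2$-transitivity) to conclude; your linear-algebra argument---showing directly that a nontrivial $h$ with a nonzero fixed vector would yield a second class of derangements via $\mathrm{Im}(1-h)\neq N$---avoids this external input and is more elementary. For the diagonal-type cases, the paper constructs a regular subgroup $N_0\leqs N$ of order divisible by $|T|$ and then observes that $N_0^*$ contains elements of at least three distinct prime orders; this single ``three primes'' trick uniformly handles the regular nonabelian case (covering TW, and HS/HC via their regular minimal normal subgroups) and the diagonal $R=S$ case. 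Your explicit check that $(t,1,\dots,1)$ is a derangement is more hands-on and gives only two classes, but that suffices. In the product-action case the two arguments coincide. Overall, the paper's route is a little slicker (one observation disposes of several types at once), while yours is self-contained in the affine step and perhaps more transparent in the diagonal step.
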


\begin{proof}
Let $H = G_{\a}$ be a point stabiliser of $G$ and let $n=|G:H|$ denote the degree of $G$. Suppose
$G$ is sharply $2$-transitive. The case $(G,n) = (S_2,2)$ is clear so let us assume $G$ is a
$2$-transitive Frobenius group with kernel $N$. Here $\Delta(G) = N^*$ and $H$ acts regularly on
$\Omega \setminus \{\a\}$, so $|H|=n-1$. Let $x \in N^*$. Then $\Centralizer_G(x) \leqs N$ and thus
$|x^G| \geqs |G:N|=|H| = |N^*|$. Since $N$ is normal we have $x^G \subseteq N^*$, so
$\Delta(G)=N^*=x^G$ and $\kappa(G)=1$.

Conversely, suppose $\kappa(G)=1$. Let $N$ be a minimal normal subgroup of $G$ and note that $N$ is
transitive and $G=HN$. There are two cases to consider.

First assume $N$ is regular, so $H\cap N=1$ and $N^* \subseteq \Delta(G)$. In fact, since
$\kappa(G)=1$, we have $N^* = x^G = \Delta(G)$ for some $x \in N^*$. If $N$ is nonabelian, then it
is isomorphic to a direct product of isomorphic nonabelian simple groups and hence $|N|$ is
divisible by at least three distinct primes, which is a contradiction since $N^*=x^G$. Therefore
$N$ is abelian and so $N\cong \Z_p^k$ for some prime $p$ and integer $k\geqs 1$. In particular, $N
\leqs \Centralizer_G(x)$. Now $|\Delta(G)| \geqs |H|$ by \cite{CC}, with equality if and only if
$G$ is sharply $2$-transitive. Therefore, $|x^G|=|G:\Centralizer_G(x)|\geqs |H|$ and thus $|N|\geqs
|\Centralizer_G(x)|$, so $\Centralizer_G(x)=N$ and $G$ is sharply $2$-transitive.

Now assume $H\cap N \neq 1$. It follows that $N\cong S^k$, where $S$ is a nonabelian simple group
and $k\geqs 1$. By \cite[Corollary 4.3B]{DM}, $N$ is the unique minimal normal subgroup of $G$. If
$k=1$ then $G$ is almost simple as $\Centralizer_G(N)=1$. So assume that $k\geqs 2$. Let $\pi_i$
denote the projection map from $H \cap N$ to the $i$-th simple factor of $N$. As noted in the proof
of \cite[Theorem 2.1]{BGW}, there exists a nontrivial subgroup $R$ of $S$ such that $\pi_i(H\cap N)
\cong R$ for all $1 \leqs i \leqs k$.

If $R=S$, then there exists a partition $\mathcal{P}$ of $\{1,2,\ldots, k\}$ such that $H\cap
N=\prod_{P\in \mathcal{P}}D_P$, where $D_P\cong S$ and $\pi_i(D_P)=S$ if $i\in P$, otherwise
$\pi_i(D_P)=1$. For each $P\in \mathcal{P}$, let $N_P$ be a subgroup given by the direct product of
$|P|-1$ of the simple direct factors of $N$ corresponding to $P$. Then
$N_0:=\prod_{P\in\mathcal{P}}N_P\leqs N$ has trivial intersection with $H$ and has order
$|\Omega|$. In particular, $N_0$ is a regular subgroup whose order is divisible by $|S|$. Since
$|S|$ is divisible by at least three distinct primes, it follows that $N_0$ has at least three
elements of distinct prime orders and thus $\kappa(G) \geqs 3$, a contradiction.

Finally, suppose $R\neq S$. By \cite[Theorem 4.6A]{DM}, $G\leqs L\wr S_k$ acting with its product
action on $\Omega=\Gamma^k$ for $k\geqs 2$, where $L\leqs \textrm{Sym}(\Gamma)$ is a primitive
almost simple group with socle $S$. If $u \in S$ is a derangement on $\Gamma$ then
$(u,1,1,\ldots,1), (u,u,1,\ldots,1) \in N$ are non-conjugate derangements on
$\Omega$, so $\kappa(G) \geqs 2$. This final contradiction completes the proof of the theorem.
\end{proof}

\section{Almost simple groups}\label{s:as}

In this section we will prove Theorem \ref{t:main2}. Let $G$ be a finite almost simple primitive
permutation group with socle $S$. Let $A={\rm Aut}(S)$, so $S \leqs G \leqs A$. In order to establish the bound $\kappa(G)
\geqs 3$ it suffices to show that if $H$ is a maximal subgroup of $S$ then there are at least three
$A$-classes of elements $x \in S$ such that $x^A \cap H$ is empty. Similarly, to justify the
asymptotic statement in Theorem \ref{t:main2},  we will show that the number of such $A$-classes tends to infinity as $|S|$
tends to infinity.

In view of Theorem \ref{t:red}, we see that Theorem \ref{t:main1} follows immediately from Theorem
\ref{t:main2}. Similarly, Corollary \ref{c:main_as} is easily deduced from Theorem \ref{t:main2}.

\subsection{Preliminaries}\label{ss:prel}

Here we record some preliminary results that will be useful in the proof of Theorem
\ref{t:main2}. Let $\phi:\mathbb{N} \to \mathbb{N}$ be Euler's totient function defined by
$$\phi(n) = |\{m \in \{1, \ldots, n-1\} \mid (m,n)=1\}|.$$
We will need the following elementary lower bound.

\begin{lemma}\label{l:euler}
If $n \in \mathbb{N}$ then $\phi(n) \geqs \sqrt{n/a}$, where $a=2$ if $n \equiv 2 \imod{4}$, otherwise $a=1$.
\end{lemma}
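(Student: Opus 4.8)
The plan is to prove the bound $\phi(n) \geqs \sqrt{n/a}$ by reducing it to a statement about prime powers via the multiplicativity of $\phi$. First I would write $n = \prod_{i=1}^r p_i^{e_i}$ as its prime factorisation, so that $\phi(n) = \prod_{i=1}^r \phi(p_i^{e_i})$, and establish the bound one prime power at a time. For a single prime power $p^e$ we have $\phi(p^e) = p^{e-1}(p-1)$, and I would check the elementary inequality $\phi(p^e) \geqs \sqrt{p^e}$ for every odd prime $p$, and also for $p=2$ with $e \geqs 2$; the only factor that fails this is $\phi(2) = 1 < \sqrt{2}$, which accounts exactly for the exceptional constant $a=2$ in the case $n \equiv 2 \imod 4$ (that is, when the factor $2^1$ appears in $n$).

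The key computation is the prime-power inequality $p^{e-1}(p-1) \geqs \sqrt{p^e}$, equivalently $p^{e-2}(p-1)^2 \geqs 1$, equivalently $(p-1)^2 \geqs p^{2-e}$. For $e \geqs 2$ the right-hand side is at most $1$ while $(p-1)^2 \geqs 1$, so the inequality holds for all primes. For $e = 1$ it reads $(p-1)^2 \geqs p$, i.e. $p^2 - 3p + 1 \geqs 0$, which holds precisely when $p \geqs 3$ (a quick check: it fails only at $p=2$, where $\phi(2)=1$ and $\sqrt{2}>1$). Thus $\phi(p^e) \geqs \sqrt{p^e}$ for every prime power $p^e$ except $p^e = 2$.

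I would then assemble these bounds. If $n \not\equiv 2 \imod 4$, then either $n$ is odd or $4 \mid n$, so the prime power $2^1$ does not occur as a factor of $n$; every factor $p_i^{e_i}$ therefore satisfies $\phi(p_i^{e_i}) \geqs \sqrt{p_i^{e_i}}$, and multiplying over $i$ gives $\phi(n) = \prod_i \phi(p_i^{e_i}) \geqs \prod_i \sqrt{p_i^{e_i}} = \sqrt{n}$, which is the claim with $a=1$. If instead $n \equiv 2 \imod 4$, then $n = 2m$ with $m$ odd, and $\phi(n) = \phi(2)\phi(m) = \phi(m) \geqs \sqrt{m} = \sqrt{n/2}$, giving the claim with $a=2$.

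I do not expect any genuine obstacle here: the argument is entirely elementary, resting on the multiplicativity of Euler's function and a single-variable inequality for prime powers. The only point requiring a little care is the bookkeeping around the prime $2$ — tracking exactly when the deficient factor $\phi(2)=1$ appears, so that the loss is confined to the factor of $a=2$ and occurs only in the stated congruence class $n \equiv 2 \imod 4$. Everything else is a routine product estimate.
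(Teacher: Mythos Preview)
Your proposal is correct and follows essentially the same approach as the paper: factor $n$ into prime powers, verify $\phi(p^e) \geqs \sqrt{p^e}$ for every prime power except $2^1$, and handle the case $n \equiv 2 \imod 4$ separately via $\phi(n)=\phi(m)$ for $n=2m$ with $m$ odd. The paper's version is slightly terser (it simply asserts $p_i^{a_i-1}(p_i-1)\geqs p_i^{a_i/2}$ when $(p_i,a_i)\neq(2,1)$), but the content is the same.
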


\begin{proof}

Write $n =\prod_{i}p_i^{a_i}$, where the $p_i$ are distinct primes, so
$$\phi(n) =  \prod_{i}\phi(p_i^{a_i}) = \prod_{i}p_i^{a_i-1}(p_i-1).$$
If $n \not \equiv 2 \imod{4}$ then $(p_i,a_i) \neq (2,1)$, so $p_i^{a_i-1}(p_i-1) \geqs
p_i^{a_i/2}$ and thus $\phi(n) \geqs \sqrt{n}$. Similarly, if $n \equiv 2 \imod{4}$ then $n=2m$ and
$m$ is odd, so $\phi(n) = \phi(m) \geqs \sqrt{m} = \sqrt{n/2}$.
\end{proof}

\begin{lemma}\label{l:class}
Let $x \in S$ be a self-centralising element of order $\a$ with $|\Normalizer_S(\la x \ra): \la x \ra|=n$.
Then there are at least $\phi(\a)/n|{\rm Out}(S)|$ distinct $A$-classes of such elements in $S$.
\end{lemma}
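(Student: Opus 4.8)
The plan is to count the $A$-classes directly among the $\phi(\a)$ generators of $\la x\ra$. Since $\la x^i\ra = \la x\ra$ whenever $(i,\a)=1$, every such generator is itself self-centralising of order $\a$ with the same normaliser quotient $n$, so it suffices to bound the number of distinct $A$-classes into which these $\phi(\a)$ elements fall.

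The key observation I would use is that two generators $x^i, x^j$ of $\la x\ra$ are $A$-conjugate if and only if they lie in the same orbit of $\Normalizer_A(\la x\ra)$ on the set of generators: any $a \in A$ with $(x^i)^a = x^j$ must send $\la x\ra = \la x^i\ra$ to $\la x^j\ra = \la x\ra$, hence $a \in \Normalizer_A(\la x\ra)$. The induced action of $\Normalizer_A(\la x\ra)$ factors through the restriction map $\Normalizer_A(\la x\ra) \to \Aut(\la x\ra) \cong (\Z/\a\Z)^*$, whose kernel is $\Centralizer_A(x)$ and whose image $I$ acts semiregularly (indeed freely) on the $\phi(\a)$ generators, since $\Aut(\la x\ra)$ acts regularly on them. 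Thus every orbit has size $|I|$, and the number of $A$-classes of generators equals $\phi(\a)/|I|$, where $|I| = |\Normalizer_A(\la x\ra)| / |\Centralizer_A(x)|$.

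It then remains to bound $|I|$ from above. Here I would use that $\Normalizer_A(\la x\ra) \cap S = \Normalizer_S(\la x\ra)$ together with $A/S = \Out(S)$ to obtain $|\Normalizer_A(\la x\ra)| \leqs |\Normalizer_S(\la x\ra)| \cdot |\Out(S)| = \a n |\Out(S)|$, invoking the hypothesis $|\Normalizer_S(\la x\ra):\la x\ra| = n$. On the other hand, the self-centralising hypothesis gives $\Centralizer_A(x) \geqs \Centralizer_S(x) = \la x\ra$, so $|\Centralizer_A(x)| \geqs \a$. Combining these two estimates yields $|I| \leqs n|\Out(S)|$, whence the number of $A$-classes is at least $\phi(\a)/(n|\Out(S)|)$, as required. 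The argument is essentially coset-counting and presents no serious obstacle; the only points demanding care are the identification of $A$-fusion of generators with orbits of $\Normalizer_A(\la x\ra)$ on $(\Z/\a\Z)^*$, and the observation that it is precisely the condition $\Centralizer_S(x) = \la x\ra$ that forces the denominator to be $\a$, so that the factor $\a$ cancels cleanly.
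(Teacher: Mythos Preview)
Your argument is correct and follows essentially the same approach as the paper: both proofs count the $\phi(\a)$ generators of $\la x\ra$ and use that any conjugacy among them is witnessed by an element of the normaliser of $\la x\ra$. The only cosmetic difference is that the paper first shows there are exactly $\phi(\a)/n$ distinct $S$-classes among these generators (using $|y^S\cap\la x\ra|=n$) and then passes to $A$-classes via $|A:S|=|\Out(S)|$, whereas you bound the $\Normalizer_A(\la x\ra)$-orbit sizes directly; the underlying counting is the same.
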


\begin{proof}
There are precisely $\phi(\a)$ elements in $\la x \ra$ of order $\a$, and for any such element $y$
we note that $|y^S \cap \la x \ra|=n$ since $\Centralizer_S(x) = \la x \ra$ and $|\Normalizer_S(\la x \ra): \la x
\ra|=n$. Therefore, $\la x \ra$ contains $\phi(\a)/n$ distinct $S$-class representatives of order
$\a$, so there are at least $\phi(\a)/n|{\rm Out}(S)|$ distinct $A$-classes.
\end{proof}

If $S$ is a simple group of Lie type then $|{\rm Out}(S)|$ is conveniently recorded in \cite[Tables 5.1.A, 5.1.B]{KL}.

Finally, let us introduce some additional notation. Let $G$ be an almost simple group with socle
$S$ and let $\mathcal{M}(G)$ be the set of maximal subgroups $H$ of $G$ such that $G=SH$. Given $H
\in \mathcal{M}(G)$, let $\kappa(G,H)$ denote the number of conjugacy classes of derangements in
$G$, with respect to the primitive action of $G$ on $G/H$. We define
\begin{equation}\label{e:phi}
\Phi(G) = \min\{\kappa(G,H) \mid H \in \mathcal{M}(G)\}.
\end{equation}
In addition, if $X$ is a finite group then $\pi(X)$ denotes the set of prime divisors of $|X|$.

\subsection{Sporadic groups}\label{ss:spor}

Here we establish Theorem \ref{t:main2} for sporadic groups. Set
$$\mathcal{A} = \{{\rm HS}.2, {\rm He}.2, {\rm Fi}_{22}.2,  {\rm HN}.2,  {\rm O'N}.2, {\rm Fi}_{24}, \mathbb{B}, \mathbb{M}\}.$$

\begin{proposition}\label{p:spor1}
The conclusion to Theorem \ref{t:main2} holds if $S$ is a sporadic simple group and $G \not\in \mathcal{A}$.
\end{proposition}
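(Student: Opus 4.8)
The plan is to establish Theorem~\ref{t:main2} for sporadic simple groups $S$ with $G \notin \mathcal{A}$ by a systematic, case-by-case analysis, exploiting the fact that there are only finitely many such groups and that their maximal subgroups and conjugacy class data are explicitly available in the literature (the Atlas \cite{ATLAS} and the \textsc{Magma} character table library). Concretely, for each admissible $G$ and each maximal subgroup $H \in \mathcal{M}(G)$, I must exhibit at least three $A$-classes of derangements in $S$, where $A = \Aut(S)$; equivalently, I must find at least three $A$-classes of elements $x \in S$ with $x^A \cap H = \emptyset$. Recall from the discussion opening Section~\ref{s:as} that it suffices to work with the $A$-classes of $S$-elements missing a given maximal $H$, since $G = SH$ and derangements are a normal subset.

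First I would set up the key reduction: an $S$-class $x^S$ fuses into an $A$-class, and $x$ is a derangement for the action on $G/H$ precisely when no $A$-conjugate of $x$ lies in $H$. The most efficient tool here is the \emph{permutation character}: for the primitive action on $G/H$, the number of fixed points of $x$ is given by the standard formula $\pi(x) = \sum |x^G \cap H|/|H|$ recovered from the class fusion of $H$ into $G$. Thus my main computational device is the set of class fusions $H \hookrightarrow S$ available in the Atlas and in \textsc{Magma}. For each maximal subgroup I would read off which $S$-classes meet $H$, discard those, and count the surviving classes up to $A$-fusion using the outer automorphism action (recorded in the Atlas and in \cite[Tables 5.1.A, 5.1.B]{KL} where relevant). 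The aim is to verify $\kappa(G,H) \geqs 3$ for every such pair, so that $\Phi(G) \geqs 3$ and hence $\kappa(G) \geqs 3$ in the sense of \eqref{e:phi}.

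To avoid examining every maximal subgroup individually when the group is large, I would lean on a uniform sufficient criterion based on element orders and Lemma~\ref{l:class}. If $S$ contains self-centralising cyclic subgroups $\la x_i\ra$ of three pairwise distinct orders $\a_1,\a_2,\a_3$, each sufficiently large relative to $n_i = |\Normalizer_S(\la x_i\ra):\la x_i\ra|$ and $|\Out(S)|$, then Lemma~\ref{l:class} already guarantees three distinct $A$-classes for each order; crucially, if a maximal subgroup $H$ has order not divisible by $\a_i$, then \emph{every} element of order $\a_i$ is automatically a derangement. So the strategy splits as follows: identify a short list of ``useful'' element orders (typically primes or prime powers dividing $|S|$ to the first power, so-called large primitive prime divisors), show that at most a controlled number of maximal subgroups can contain elements of each such order, and conclude that the small residue of exceptional maximal subgroups can be checked by hand in \textsc{Magma}. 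The set $\mathcal{A}$ is presumably excluded precisely because in those groups the relevant fusion or automorphism bookkeeping is more delicate and is deferred to a separate proposition.

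The main obstacle I anticipate is the $A$-fusion control rather than the $S$-level counting: an element of order $\a$ in $S$ may split into several $S$-classes that are then amalgamated by $\Out(S)$, and conversely $\Out(S)$ may identify classes that one would naively count separately. Getting the count of $A$-classes of derangements correct — as opposed to $S$-classes — requires careful tracking of the $\Out(S)$-action on classes, which is exactly where the groups in $\mathcal{A}$ (all involving a graph or order-two outer automorphism with nontrivial fusion, or the very large groups $\mathbb{B}$ and $\mathbb{M}$ where complete class data is unwieldy) become problematic. For the groups at hand, I expect that choosing element orders $\a$ with $\phi(\a)/\bigl(n\,|\Out(S)|\bigr) > 1$ neutralises this difficulty, since then Lemma~\ref{l:class} yields strictly more than one $A$-class per order and three independent orders give the required bound; the residual finite check is then a routine \textsc{Magma} verification using the stored maximal subgroups and fusion maps.
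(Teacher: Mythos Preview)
Your core approach matches the paper's: compute $\kappa(G,H)$ for each maximal $H$ by reading off the class fusion data and counting derangement classes. The paper does exactly this, using the \textsf{GAPCTL} Character Table Library rather than \textsc{Magma}, and simply tabulates the exact value of $\Phi(G)$ for every $G$ with $G\notin\mathcal{A}$; no further argument is needed because the stored fusion maps are complete for these groups.

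Where you diverge is in the secondary machinery. The element-order criterion via Lemma~\ref{l:class} and the $|\pi(S)\setminus\pi(H)|$ reduction that you propose are unnecessary here: the set $\mathcal{A}$ is carved out precisely so that everything outside it can be handled by a direct lookup. Those techniques are exactly what the paper deploys in the \emph{next} proposition (Proposition~\ref{p:spor2}) to deal with the groups in $\mathcal{A}$, where some fusion data is missing from \textsf{GAPCTL} or the groups ($\mathbb{B}$, $\mathbb{M}$) are too large for a naive check. So your instinct about the obstacles is sound, but you have anticipated them in the wrong place: for this proposition the proof is a one-line computation, and the cleverness is deferred.
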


\begin{proof}
In each case it is straightforward to calculate the exact value of $\kappa(G,H)$ using the
information on the fusion of $H$-classes in $G$ that is available in the \textsf{GAPCTL} Character
Table Library \cite{GAPCTL}. For example, we obtain the following results when $G = {\rm M}_{11}$:
\small
\renewcommand{\arraystretch}{1.2}
$$\begin{array}{lccccc} \hline
H & {\rm M}_{10} & {\rm L}_{2}(11) & {\rm M}_{9}.2 & S_5 & 2.S_4 \\ \hline
\kappa(G,H) & 3 & 3 & 3 & 4 & 3 \\ \hline
\end{array}$$
\normalsize
In all cases, the exact value of $\Phi(G)$ (see \eqref{e:phi}) is recorded in Table \ref{t:phig}.
\end{proof}

\begin{table}
\footnotesize
\arraycolsep=4pt
\renewcommand{\arraystretch}{1.2}
$$\begin{array}{lccccccccccccccc} \hline
G & {\rm M}_{11} & {\rm M}_{22} & {\rm M}_{12} & {\rm J}_{1} & {\rm HS} & {\rm M}_{22}.2 & {\rm M}_{23} &
{\rm J}_{2} & {\rm M}_{12}.2 & {\rm M}_{24} & {\rm J}_{3} & {\rm McL} & {\rm McL}.2 & {\rm J}_{2}.2 & {\rm O'N} \\ \hline

\Phi(G) &  3 & 4 & 5 & 5 & 5 & 6 & 6 & 6 & 7 & 7 & 7 & 7 & 7 & 8 & 9 \\ \hline
& & & & & & & & & & & & &  & & \\ \hline

\multicolumn{1}{c}{{\rm Co}_{3}} & {\rm J}_{3}.2 & {\rm Th} & {\rm Co}_{2} & {\rm He} & {\rm Ru} & {\rm Ly} &
{\rm Fi}_{22} & {\rm Fi}_{23} & {\rm Suz} & {\rm J}_{4} & {\rm HN} & {\rm Suz}.2 & {\rm Fi}_{24}' & {\rm Co}_{1} \\ \hline

\multicolumn{1}{c}{10} & 11 & 11 & 12 & 13 & 13 & 14 & 14 & 15 & 15 & 17 & 19 & 21 & 26 & 27 \\ \hline
\end{array}$$
\caption{$\Phi(G)$ for some almost simple sporadic groups}
\label{t:phig}
\end{table}
\renewcommand{\arraystretch}{1}

\begin{proposition}\label{p:spor2}
The conclusion to Theorem \ref{t:main2} holds if $S$ is a sporadic simple group.
\end{proposition}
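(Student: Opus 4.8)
By Proposition \ref{p:spor1} it remains to treat the eight groups $G \in \mathcal{A}$, and since none of these appears in Table \ref{t:kappa}, the goal in each case is to prove $\kappa(G)\geqs 3$. As noted at the start of this section, it suffices to exhibit, for every maximal subgroup $H$ of $S$ with $G=SH$, at least three $A$-classes of elements $x\in S$ with $x^A\cap H$ empty. The basic mechanism I would use is the elementary observation that an element $x$ of order $\alpha$ is automatically a derangement for the action on $G/H$ whenever $\alpha\nmid |H|$; in particular, if a prime $p$ divides $|S|$ but not $|H|$, then every element of order $p$ has $x^A\cap H$ empty.

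The plan is to combine this with Lemma \ref{l:class}. For a self-centralising element $x\in S$ of order $\alpha$ with $|\Normalizer_S(\la x\ra):\la x\ra|=n$, Lemma \ref{l:class} (with Lemma \ref{l:euler} when $\alpha$ is large) supplies at least $\phi(\alpha)/(n|\Out(S)|)$ distinct $A$-classes of elements of order $\alpha$, and each of these is a derangement for $H$ provided $\alpha\nmid |H|$. For the five extensions ${\rm HS}.2,\,{\rm He}.2,\,{\rm Fi}_{22}.2,\,{\rm HN}.2,\,{\rm O'N}.2$ and for ${\rm Fi}_{24}$, the maximal subgroups are explicitly known, so for each such $H$ I would read off the primes (or prime powers) dividing $|S|$ but not $|H|$ and count the corresponding self-centralising classes via Lemma \ref{l:class}; where a single order $\alpha$ does not already yield three classes, I would use two or more coprime choices of $\alpha$, or verify the fusion of the finitely many offending $H$ directly.

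The genuinely hard cases are $\mathbb{B}$ and $\mathbb{M}$, where direct computation of class fusion is infeasible and, for $\mathbb{M}$, the maximal subgroups are not completely classified. Here I would exploit the largest primes dividing $|S|$ — for instance $p\in\{47,59,71\}$ in $\mathbb{M}$, and the analogous large primes in $\mathbb{B}$. Each such $p$ divides $|S|$ to the first power, so a corresponding element is self-centralising with $\Normalizer_S(\la x\ra)$ recorded in the \textsc{Atlas}, and Lemma \ref{l:class} gives at least two $A$-classes of elements of order $p$ (e.g. the classes $47AB$, $59AB$, $71AB$ in $\mathbb{M}$, where $\phi(p)/n=2$ in each case). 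One then checks that no maximal subgroup of $S$ has order divisible by two of these large primes, so at least two of the chosen primes are coprime to any given $|H|$; the associated $2+2=4$ classes then all give $x^A\cap H$ empty, forcing $\kappa(G,H)\geqs 3$.

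The principal obstacle is precisely the incomplete classification of the maximal subgroups of $\mathbb{M}$: an argument by orders cannot a priori account for a hypothetical unclassified maximal subgroup. This is exactly what forces the use of very large primes — the candidate subgroups left open in the classification are almost simple of comparatively small order, none divisible by $47$, $59$ or $71$, so the large-prime count is insensitive to whether they exist. The remaining work is bookkeeping: confirming from the \textsc{Atlas} that the chosen elements are self-centralising with the stated normalizer index $n$, and verifying that no maximal subgroup of $\mathbb{B}$ or $\mathbb{M}$ meets two of the selected primes.
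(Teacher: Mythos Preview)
Your strategy is sound and would work, but it diverges from the paper's proof in two places worth noting.

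For $\mathbb{B}$ and $\mathbb{M}$, the paper uses a much simpler device: if $|\pi(G)\setminus\pi(H)|\geqs 3$ then elements of three distinct prime orders not dividing $|H|$ are automatically non-conjugate derangements, and one is done in a single line. The paper simply asserts this inequality holds for every maximal $H$ of $\mathbb{B}$ and $\mathbb{M}$, which dispatches both groups at once without invoking Lemma~\ref{l:class} or counting classes per prime. Your route (two $A$-classes for each of two large primes, plus the observation that no maximal subgroup meets two of $47,59,71$) is correct and, to your credit, more explicitly robust against the unclassified maximal subgroups of $\mathbb{M}$; but the underlying idea---use primes missing from $|H|$---is the same, and the paper's formulation is leaner.

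For the six smaller groups in $\mathcal{A}$, the paper does not attempt a uniform lemma-based argument. It computes directly: for ${\rm HS}.2$, ${\rm He}.2$, ${\rm Fi}_{22}.2$ it uses {\sc Magma} on explicit permutation representations to find $\Phi(G)=11,16,17$; for the remaining groups it first discards all $(G,H)$ with $|\pi(G)\setminus\pi(H)|\geqs 3$, leaving a short list of six specific pairs $(G,H)$ (three for ${\rm HN}.2$, one each for $({\rm HN}.2,(5{:}4\times{\rm U}_3(5)).2)$, $({\rm O'N}.2,{\rm J}_1\times 2)$, $({\rm Fi}_{24},{\rm Fi}_{23}\times 2)$), which are then handled individually via stored fusion data in \textsf{GAPCTL} or ad~hoc order arguments. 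Your plan to use Lemma~\ref{l:class} throughout is a reasonable alternative, but the acknowledgement that some ``offending $H$'' may need direct fusion verification essentially concedes the same endgame as the paper.
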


\begin{proof}

We may assume $G \in \mathcal{A}$. If $G \in \{{\rm HS}.2, {\rm He}.2, {\rm Fi}_{22}.2\}$ we can
use {\sc Magma} \cite{magma} to determine the fusion of $H$-classes in $G$, working with the
respective permutation representations of degree $100$, $2058$ and $3510$ provided in the Web-Atlas
\cite{WebAt}. In this way, we calculate that $\Phi({\rm HS}.2) = 11$, $\Phi({\rm He}.2) = 16$ and
$\Phi({\rm Fi}_{22}.2) = 17$.

Of course, we can immediately discard any remaining cases $(G,H)$ with the property that $|\pi(G)
\setminus \pi(H)| \geqs 3$, which eliminates the Baby Monster and the Monster. In fact, one can
check that it only remains to deal with the following cases:
$$\begin{array}{llllll}
(1) & ({\rm HN}.2,S_{12}) & (2) & ({\rm HN}.2, 4.{\rm HS}.2) & (3) & ({\rm HN}.2,{\rm U}_{3}(8){:}6) \\
(4) & ({\rm HN}.2, (5{:}4 \times {\rm U}_{3}(5)).2) & (5) & ({\rm O'N}.2,{\rm J}_{1}\times 2) & (6) & ({\rm Fi}_{24}, {\rm Fi}_{23} \times 2)
\end{array}$$
In cases (1) -- (3), the fusion of $H$-classes in $G$ is stored in \cite{GAPCTL} and the result
quickly follows as above (we get $\kappa(G,H) = 31, 23, 57$, respectively). In (4) and (6),
\cite[Proposition 4.3]{BGW} implies that $G$ contains at least three classes of  derangements of
prime order. For example, in (6) we find that $G$ contains derangements of order $3,7$ and $29$.
Similarly, in case (5), $G$ contains derangements of order $7$ and $31$, and elements of order $14$
are also derangements since $|H|$ is indivisible by $14$.
\end{proof}

\subsection{Alternating groups}\label{ss:alt}

In this section we establish Theorem \ref{t:main2} in the case where $S=A_n$ is an alternating group of degree $n \geqs 5$.

\begin{proposition}\label{p:an1}
The conclusion to Theorem \ref{t:main2} holds if $S=A_n$ and $n \leqs 24$.
\end{proposition}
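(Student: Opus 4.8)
The plan is to verify Theorem \ref{t:main2} computationally for the finite list of alternating groups $S = A_n$ with $5 \leqs n \leqs 24$. For each such $n$, the relevant almost simple groups $G$ satisfy $A_n \leqs G \leqs A = \mathrm{Aut}(A_n)$, and since $\mathrm{Out}(A_n) \cong \Z_2$ for $n \neq 6$ and $\mathrm{Out}(A_6) \cong \Z_2 \times \Z_2$, the possibilities for $G$ are just $A_n$ and $S_n$ (together with the extra groups $S_6$, $\mathrm{M}_{10}$, $\mathrm{PGL}_2(9)$ and $\mathrm{Aut}(A_6)$ when $n = 6$). As explained in the opening paragraph of Section \ref{s:as}, it suffices to show that for every maximal subgroup $H \in \mathcal{M}(G)$ there are at least three $A$-classes of elements $x \in S$ with $x^A \cap H$ empty, i.e.\ to compute $\kappa(G,H)$ explicitly and check that the minimum value $\Phi(G)$ is at least $3$, recording the finitely many exceptions that appear in Table \ref{t:kappa}.

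First I would enumerate, for each $n$ in range, the conjugacy classes of maximal subgroups of $G$; these are completely known from the O'Nan--Scott theorem together with the classical description of maximal subgroups of symmetric and alternating groups (intransitive, imprimitive, and primitive types), and for these small degrees they are tabulated. For each maximal subgroup $H \in \mathcal{M}(G)$ I would then determine the fusion of $H$-classes into $G$-classes and into $A$-classes, exactly as in the sporadic case (Propositions \ref{p:spor1} and \ref{p:spor2}): an element $x \in S$ is a derangement for the action on $G/H$ precisely when $x^G \cap H = \emptyset$, and $\kappa(G,H)$ counts the resulting $G$-classes (or $A$-classes of $S$) with no representative in $H$. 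This fusion data is directly accessible in the \textsf{GAPCTL} Character Table Library \cite{GAPCTL} or computable in {\sc Magma} \cite{magma} using a concrete permutation representation of $G$ of degree $n$ together with the embedding of $H$; the computation is entirely routine for $n \leqs 24$.

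The key combinatorial point that makes the count tractable by hand, should one wish to avoid relying solely on machine output, is that conjugacy classes of $A_n$ and $S_n$ are indexed by cycle types (partitions of $n$), with the only subtlety being the splitting of an $S_n$-class into two $A_n$-classes precisely when the cycle type consists of distinct odd parts. A point stabiliser $H$ for one of the standard primitive or imprimitive actions contains elements of prescribed cycle types, so identifying derangements reduces to checking which cycle types cannot occur in $H$; for instance an $n$-cycle (when $n$ is prime, giving a self-centralising element) is typically a derangement for intransitive and imprimitive $H$, and such elements can be fed into Lemma \ref{l:class} to produce many $A$-classes at once.

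I expect the main obstacle to be the bookkeeping for the small exceptional degrees $n \in \{5, 6\}$, where $A_n$ has several maximal subgroups of comparatively large index and where the genuine exceptions with $\kappa(G) < 3$ actually occur, namely $(A_5, D_{10})$, $(A_5, D_6)$, $(A_5, A_4)$, $(S_5, D_{12})$, $(S_5, S_4)$, $(A_6, 3^2{:}4)$, $(A_6, A_5)$, $(S_6, 3^2{:}D_8)$ and $(\mathrm{M}_{10}, [16])$ from Table \ref{t:kappa}. These must be computed precisely and matched against the table rather than dismissed by a crude prime-divisor bound; in particular, for $n = 6$ one must correctly handle the exceptional outer automorphism, distinguish the two $A_6$-classes of $A_5$ and of $3^2{:}4$, and treat each of the groups $S_6$, $\mathrm{PGL}_2(9)$, $\mathrm{M}_{10}$ and $\mathrm{Aut}(A_6)$ separately. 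For all remaining $(G,H)$ the verification is a direct check that at least three derangement classes survive, and the asymptotic statement of the theorem is irrelevant here since only finitely many groups are in play.
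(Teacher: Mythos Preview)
Your proposal is correct and takes essentially the same approach as the paper: a direct computational verification using {\sc Magma} (the paper does not invoke \textsf{GAPCTL} here) of $\kappa(G,H)$ for every $G$ with socle $A_n$, $5\leqs n\leqs 24$, and every $H\in\mathcal{M}(G)$, with the small-$n$ exceptions matched against Table~\ref{t:kappa}. The paper's proof is terser---it simply records $\Phi(G)$ in Tables~\ref{tab:an1} and~\ref{tab:an2} and notes the special values for the three extra groups $\mathrm{PGL}_2(9)$, $\mathrm{M}_{10}$, $\mathrm{Aut}(A_6)$---but your outline of the enumeration, the handling of $n=6$, and the cycle-type bookkeeping is a faithful expansion of exactly that computation.
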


\begin{proof}
We can use {\sc Magma} \cite{magma} to
determine the fusion of $H$-classes in $G$, and the result quickly follows. For instance, we obtain
the results presented in Table \ref{tab:an1} if $G \in \{A_5,S_5,A_6,S_6\}$. In addition, we
calculate that $\Phi(G)=4$ if $G={\rm PGL}_{2}(9) = A_6.2$ or ${\rm Aut}(A_6) = A_6.2^2$, and if $G
= {\rm M}_{10} = A_6.2$ we get $\kappa(G,[16]) = 2$ (where $[16]$ is a Sylow $2$-subgroup of $G$),
$\kappa(G,3^2{:}Q_8) = 3$ and $\kappa(G,5{:}4) = 4$. For $7 \leqs n \leqs 24$ we record $\Phi(G)$
in Table \ref{tab:an2}.
\renewcommand{\arraystretch}{1.2}
\begin{table}
\footnotesize
$$\begin{array}{cl} \hline
\kappa(G,H) & (G,H) \\ \hline
1 & (A_5,D_{10}) \\
2 & (A_5, D_6),\, (A_5, A_4),\, (S_5,D_{12}),\, (S_5,S_4),\, (A_6,3^2{:}4),\, (A_6,A_5),\, (S_6,3^2{:}D_8) \\
3 & (S_5,5{:}4),\, (A_6,S_4),\, (S_6, S_4 \times 2) \\
4 & (S_6,S_5) \\ \hline
\end{array}$$
\caption{$\kappa(G,H)$ for $G \in \{A_5,S_5,A_6,S_6\}$}
\label{tab:an1}
\end{table}
\renewcommand{\arraystretch}{1}
\end{proof}

\renewcommand{\arraystretch}{1.2}
\begin{table}
\footnotesize
$$\begin{array}{lcccccccccccccccccc} \hline
n & 7 & 8 & 9 & 10 & 11 & 12 & 13 & 14 & 15 & 16 & 17 & 18 & 19 & 20 & 21 & 22 & 23 & 24 \\ \hline
\Phi(A_n) & 3 & 5 & 5 & 7 & 7 & 11 & 12 & 15 & 18 & 22 & 26 & 31 & 38 & 46 & 55 & 62 & 74 & 88 \\
\Phi(S_n) & 4 & 5 & 7 & 9 & 11 & 15 & 19 & 23 & 30 & 35 & 44 & 50 & 65 & 80 & 95 & 111 & 133 & 157 \\ \hline
\end{array}$$
\caption{$\Phi(G)$ for $G \in \{A_n,S_n\}$, $7 \leqs n \leqs 24$}
\label{tab:an2}
\end{table}
\renewcommand{\arraystretch}{1}

\begin{proposition}\label{p:an2}
The conclusion to Theorem \ref{t:main2} holds if $S=A_n$.
\end{proposition}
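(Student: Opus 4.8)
The plan is to reduce to $n \geqs 25$, since the cases $n \leqs 24$ are settled in Proposition \ref{p:an1}. For such $n$ we have $A=\Aut(A_n)=S_n$ and $G\in\{A_n,S_n\}$, so the $A$-classes of elements of $S=A_n$ are exactly the even cycle types (partitions of $n$ with an even number of even parts), and an even element $x$ of cycle type $\l$ is a derangement precisely when the maximal subgroup $H$ contains no permutation of type $\l$ (for an even $\l$ admitting an invariant configuration, one can build an even representative inside $H$). It therefore suffices to exhibit, for each maximal $H$, a number of even cycle types missing from $H$ that tends to infinity with $n$; this yields both $\kappa(G)\geqs 3$ for $n\geqs 25$ and the asymptotic claim. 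By the O'Nan--Scott theorem I split into $H$ intransitive, $H$ imprimitive, and $H$ primitive.

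For $H=(S_k\times S_{n-k})\cap A_n$ with $1\leqs k\leqs (n-1)/2$, an even $x$ is a derangement iff no sub-multiset of its cycle lengths sums to $k$. I would use two overlapping constructions. When $k$ is small, every partition of $n$ into parts $\geqs k+1$ avoids the sub-sum $k$, and the number of such partitions of even type tends to infinity with $n$. When $k$ is large, every partition with a part $b>n-k$ avoids $k$: writing the tail as $\mu\vdash n-b$, the attainable sub-sums lie in $[0,n-b]\cup[b,n]$, and since $n-b<k<b$ they miss $k$; the number of admissible tails $\mu$ of the correct parity grows with $k$. Taking the threshold between the two regimes at $k\asymp\sqrt{n}$ makes the minimum of the two counts tend to infinity uniformly in $k$. (Note $k=n/2$ falls under the imprimitive case.)

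For $H=(S_a\wr S_b)\cap A_n$ with $n=ab$ and $a,b\geqs 2$, the key observation is that if $x$ preserves a partition into $b$ blocks of size $a$, then each cycle of $x$ of length $\ell$ cyclically covers $d$ blocks with $d\mid\ell$ and $d\leqs b$, meeting each in $\ell/d\leqs a$ points; thus $\ell=dm$ with $d\leqs b$ and $m\leqs a$. Consequently a cycle of prime length $p>\max(a,b)$ cannot occur in any element of $H$. Since $\max(a,b)\leqs n/2$, Bertrand's postulate supplies a prime $p$ with $n/2<p\leqs 2n/3$, so $p>\max(a,b)$; then the even cycle types consisting of one $p$-cycle together with an arbitrary $\mu\vdash n-p$ are all derangements, and their number grows with the number of partitions of $n-p\geqs n/3$.

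The primitive case is the main obstacle. Most primitive $H$ (affine, diagonal, almost simple, twisted-wreath types) obey a bound of the shape $|H|<n^{1+\log_2 n}$ (see the order estimates in \cite{DM} and the references therein), so the number of cycle types meeting $H$ is at most $|H|=\exp(O(\log^2 n))$, negligible against the $\exp(\Theta(\sqrt{n}))$ even cycle types; hence almost all even types are derangements. The genuinely delicate family is the product-action groups $H\leqs S_m\wr S_r$ on $n=m^r$ points, whose order can reach $\exp(\Theta(\sqrt{n}\log n))$ and so defeats the counting argument. For these I return to the prime-cycle idea: if $g\in S_m\wr S_r$ has image of order $s$ on the $r$ coordinates (so the prime divisors of $s$ are at most $r$), then $g^s$ lies in the base group and acts coordinatewise, whence every cycle length of $g^s$ is an l.c.m.\ of integers $\leqs m$ and has all prime factors $\leqs m$. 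For an odd prime $p>\max(m,r)$ we have $p\nmid s$, so a $p$-cycle of $g$ would force $p$ to divide a cycle length of $g^s$, a contradiction; thus an element containing a cycle of prime length $p$ with $\max(m,r)<p\leqs n$ is a derangement. Choosing such a moderate $p$ and appending an arbitrary even tail $\mu\vdash n-p$ again gives infinitely many derangement classes, and the finitely many sporadic exceptions to the order bound are irrelevant once $n\geqs 25$. Assembling the three cases shows $\kappa(G)\to\infty$, and in particular $\kappa(G)\geqs 3$ for all $n\geqs 25$, which together with Proposition \ref{p:an1} completes the proof.
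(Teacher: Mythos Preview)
Your case split and overall strategy are sound, but the primitive case is vastly overcomplicated and has a small gap. You invoke Mar\'oti-type order bounds and then separately treat product-action subgroups $S_m\wr S_r$ on $n=m^r$ points, but your dichotomy omits the other large family in that theorem: $A_m$ or $S_m$ acting on $k$-subsets, $n=\binom{m}{k}$ with $k\geqs 2$. For $k=2$ these have $|H|\sim\exp(\Theta(\sqrt n\log n))$, so they defeat the counting argument, yet they are not product-action groups in your sense. They are easily dispatched by the same prime-cycle idea (take any prime $p>m$), but as written the case is missing.

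The paper bypasses all of this. By Jordan's classical theorem \cite{jordan}, a primitive subgroup of $S_n$ containing an $r$-cycle for a prime $r\leqs n-3$ must contain $A_n$; hence for any primitive proper $H$, every such $r$-cycle is a derangement. This gives about $\pi(n)$ derangement classes in one line, with no order bounds, no O'Nan--Scott case analysis, and no CFSG.

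Two smaller points. In the imprimitive case, ``Bertrand's postulate supplies a prime $p$ with $n/2<p\leqs 2n/3$'' misquotes Bertrand; the shorter interval requires Nagura's refinement or a direct check. The paper sidesteps this by simply counting primes in $(a,2a)$ (at least three once $a\geqs 9$, by \cite{Ram}) and in $(a,4a)$ for small $a$. In the intransitive case the paper writes down explicit two- and three-part cycle shapes $(\ell,n-\ell)$ or $(3,\ell,n-\ell-3)$, $(5,\ell,n-\ell-5)$ avoiding the forbidden value $k$, which is quicker than your two-regime partition-counting argument.
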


\begin{proof}
We may assume that $n>24$.
Let $H$ be a maximal subgroup of $G$ such that $G=SH$. We consider three cases according to the action of $H$ on $\{1, \ldots, n\}$:
\begin{itemize}\addtolength{\itemsep}{0.3\baselineskip}
\item[(a)] $H$ acts primitively on $\{1, \ldots, n\}$;
\item[(b)] $H$ acts transitively and imprimitively on $\{1, \ldots, n\}$;
\item[(c)] $H$ acts intransitively on $\{1, \ldots, n\}$.
\end{itemize}

In case (a), let $x \in G$ be an $r$-cycle, where $r$ is a prime such that $2 \leqs r<n-2$. Then  a
theorem of Jordan \cite{jordan} implies that $x$ is a derangement, whence $\kappa(G,H) \geqs 3$.

Next assume (b) holds, so $H$ is of type $S_a \wr S_b$, where $n=ab$ and $a,b \geqs 2$. Let $r$ be
a prime in the interval $(a,n)$. As noted in the proof of \cite[Proposition 3.6]{BGW}, any
$r$-cycle in $G$ is a derangement. Now, if $a \geqs 9$ then there are at least three distinct
primes in the interval $(a,2a)$ (see \cite{Ram}, for example) and the result follows (we also note
that the number of primes in $(a,2a)$ tends to infinity as $a$ tends to infinity). Similarly, if
$a<9$ then $b \geqs 4$ (since $n>24$) and there are at least three primes in $(a,4a)$ for all $a
\geqs 2$.

Finally, let us consider (c), so $H$ is of type $S_k \times S_{n-k}$ with $1 \leqs k < n/2$.
Clearly, if $n$ is even then any $x \in G$ of cycle-shape $(\ell,n-\ell)$, where $1 \leqs \ell
\leqs n/2$, $\ell \neq k$, is a derangement. Now assume $n$ is odd. If $k \neq 3$ then any $x \in
G$ of cycle-shape $(3,\ell,n-\ell-3)$, where $1 \leqs \ell \leqs (n-3)/2$, $\ell \not\in
\{k,k-3\}$, is a derangement. Similarly, if $k=3$ then take $x \in G$ of cycle-shape
$(5,\ell,n-\ell-5)$, where $1 \leqs \ell \leqs (n-5)/2$ and $\ell \neq 3$. The result follows.

In each case, note that we have also shown that $\kappa(G,H)$ tends to infinity as $|G|$ tends to infinity.
\end{proof}

\vs

For the remainder, we may assume that $S$ is a group of Lie type; we deal with the exceptional
groups in Section \ref{ss:excep} and the classical groups in Section \ref{ss:class}. Our basic
approach is similar in both cases. The aim is to identify a collection of elements in $G$ that
belong to very few maximal subgroups -- if we can show that there are at least three $A$-classes of
such elements (and the number of such classes tends to infinity as $|G|$ tends to infinity), then
it just remains to deal with the specific possibilities for $H$ that contain these elements. Given
such a subgroup $H$, we choose an alternative collection of elements $x \in G$ such that $x^A \cap
H$ is empty, and we then show that there are sufficiently many $A$-classes with this
property. For some groups of low rank over small fields, we will use {\sc Magma} \cite{magma} to
obtain the desired result.

\subsection{Exceptional groups}\label{ss:excep}

Let $S$ be a finite simple group of exceptional Lie type over $\mathbb{F}_q$, where $q=p^f$ and $p$ is a prime. Set
$$\mathcal{A} = \{G_2(3), G_2(4), G_2(5), {}^2B_2(8), {}^2B_2(32), {}^2G_2(27), {}^3D_4(2),  {}^2F_4(2)'\}.$$

\begin{proposition}\label{p:ex}
If $S \in \mathcal{A}$ then either $\kappa(G,H) \geqs 4$, or $(G,H) = ({}^2B_2(8){:}3, 5{:}4 \times 3)$ and $\kappa(G,H) = 2$.
\end{proposition}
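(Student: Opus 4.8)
The plan is to handle the eight groups in $\mathcal{A}$ as a finite, explicit computation, since each is a specific small exceptional group over a small field for which complete character-theoretic and subgroup data are available. The overall strategy mirrors the sporadic and small alternating cases: for each $S \in \mathcal{A}$ and each $G$ with socle $S$, and each maximal subgroup $H \in \mathcal{M}(G)$, I would compute $\kappa(G,H)$ directly from the fusion of $H$-classes in $G$, using the \textsf{GAPCTL} Character Table Library \cite{GAPCTL} where the relevant tables and fusion maps are stored, and otherwise using \textsc{Magma} \cite{magma} with an explicit permutation or matrix representation drawn from the Web-Atlas \cite{WebAt}. The goal is to verify that $\kappa(G,H) \geqs 4$ in every case, with the single genuine exception $(G,H) = ({}^2B_2(8){:}3,\, 5{:}4 \times 3)$, where the count is exactly $2$.

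First I would enumerate, for each $S \in \mathcal{A}$, the almost simple overgroups $G$ with $S \leqs G \leqs \Aut(S)$; the outer automorphism groups here are small (e.g. cyclic of order $f$ or $3f$ for the Suzuki and Ree groups, and of order $2$, $3$ or their products for $G_2(3)$, $G_2(4)$, ${}^3D_4(2)$, ${}^2F_4(2)'$), so this is a short list. For each such $G$ I would list the maximal subgroups $H$ with $G = SH$. Before computing exact values I would apply the cheap prime-divisor test used in Proposition \ref{p:spor2}: any $H$ with $|\pi(G) \setminus \pi(H)| \geqs 3$ immediately yields at least three $A$-classes of derangements of distinct prime orders via \cite[Proposition 4.3]{BGW}, hence $\kappa(G,H) \geqs 3$, and typically $\geqs 4$ once one accounts for derangements of composite or mixed order. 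This discards most subgroups and isolates the handful of cases where $H$ shares most of the prime divisors of $G$ and a careful count is genuinely required.

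For those residual cases I would compute $\kappa(G,H)$ exactly by determining which conjugacy classes of $G$ meet $H^g$ for some $g$, i.e. by reading off the class-fusion map $\Irr$-theoretically or by running over the classes of $H$ and following the fusion into $G$. The exceptional case demands extra care: for $G = {}^2B_2(8){:}3$ and $H = 5{:}4 \times 3$ of order $60$, I would verify that exactly two $A$-classes of $S = {}^2B_2(8)$ avoid $H$, consistent with $\kappa(G,H) = 2$. Here the structure of ${}^2B_2(8)$ is helpful — its element orders are $1,2,4,5,7,13$ and the maximal tori have orders $q-1 = 7$ and $q \pm \sqrt{2q} + 1 = 5, 13$ — so I would check that the derangement classes correspond to the two torus types not detected inside a subgroup of order $60 = 2^2 \cdot 3 \cdot 5$, namely the classes of elements of order $7$ and $13$, after accounting for the action of the order-$3$ field automorphism on the $\Sp_2(8)$-type torus classes. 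I would confirm this count against the \textsc{Magma} primitive-group library entry and the \textsf{ATLAS} character table of ${}^2B_2(8)$.

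The main obstacle will be bookkeeping rather than any conceptual difficulty: I must ensure the fusion maps are recorded or reconstructed correctly for every $(G,H)$ pair, since an error in fusion directly corrupts $\kappa(G,H)$, and I must be careful to count $A$-classes (not merely $S$- or $G$-classes), because the outer automorphisms can fuse several $S$-classes into one $A$-class and thereby reduce the count. This is exactly the subtlety that produces the borderline value $\kappa = 2$ in the Suzuki case, so the delicate part is verifying that the two surviving $A$-classes there really do not collapse to one and really are not detected inside $5{:}4 \times 3$. I would cross-check the computed values against the independently tabulated quantities $\Phi(G)$ and against \cite[Proposition 4.3]{BGW} wherever a prime-order derangement count is available, to guard against computational slips.
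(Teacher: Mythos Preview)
Your approach is essentially the same as the paper's: both reduce the proposition to a finite explicit computation in {\sc Magma}, working with permutation representations from the Web-Atlas. The paper's proof is a single sentence to that effect; you have supplied a more detailed plan for organising the computation (prime-divisor pre-screening, use of \textsf{GAPCTL} fusion maps where available, cross-checks), which is sensible but not a different method.

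One small slip: you write that you ``must be careful to count $A$-classes (not merely $S$- or $G$-classes)''. In fact $\kappa(G,H)$ is by definition the number of \emph{$G$-classes} of derangements, and that is what a direct {\sc Magma} computation of fusion from $H$ into $G$ produces. The $A$-class argument in the paper is a sufficiency device used elsewhere to give uniform lower bounds over all $G$ with a given socle; for these eight specific groups one simply computes $\kappa(G,H)$ for each $G$ separately. This does not affect the validity of your plan, but be aware that in the borderline Suzuki case it is the $G$-class count that equals $2$, not an $A$-class count.
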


\begin{proof}
This is a straightforward calculation, using {\sc Magma} and a suitable permutation representation of $G$ given in the Web-Atlas \cite{WebAt}.
\end{proof}

For the remainder, we may assume that $S \not\in \mathcal{A}$.

\renewcommand{\arraystretch}{1.2}
\begin{table}
\footnotesize
$$\begin{array}{llllll} \hline
S & |x_1| & n_1 & |x_2| & n_2 & \mathcal{M}(x_1) \\ \hline
{}^2B_2(q),\, q \geqs 2^7 & q+\sqrt{2q}+1 & 4 & q-\sqrt{2q}+1 & 4 & \la x_1 \ra{:}\Z_4 \\
{}^2G_2(q),\, q \geqs 3^5 & q+\sqrt{3q}+1 & 6 & q-\sqrt{3q}+1 & 6 & \la x_1 \ra{:}\Z_6 \\
{}^2F_4(q),\, q \geqs 2^3 & q^2+\sqrt{2q^3}+q+\sqrt{2q}+1 & 12 & q^2-\sqrt{2q^3}+q-\sqrt{2q}+1 & 12 & \la x_1 \ra{:}\Z_{12} \\
{}^3D_4(q),\, q \geqs 3 & q^4-q^2+1 & 4 & (q^3-1)(q+1) & 4 & \la x_1 \ra{:}\Z_4 \\
{}^2E_6(q), \, q \geqs 4 & (q^6-q^3+1)/a & 9 & (q^5+1)(q-1)/a & 10 & {\rm SU}_{3}(q^3).3 \\
G_2(q),\, q \geqs 9 &  q^2-q+1 & 6 & q^2+q+1 & 6 & {\rm SU}_{3}(q).2 \\
F_4(q),\, q \geqs 4 & q^4-q^2+1 & 12 & q^4+1 & 8 & {}^3D_4(q).3 \\
E_6(q) & (q^6+q^3+1)/b & 9 & (q+1)(q^5-1)/b & 10 & {\rm SL}_{3}(q^3).3 \\
E_7(q),\, q \geqs 4 & (q+1)(q^6-q^3+1)/c & 18 & (q-1)(q^6+q^3+1)/c & 18 & (\Z_{(q+1)/c}.{}^2E_6(q)).2 \\
E_8(q) & q^8+q^7-q^5-q^4-q^3+q+1 & 30 & q^8-q^7+q^5-q^4+q^3-q+1 & 30 & \la x_1 \ra{:}\Z_{30} \\ \hline
\multicolumn{6}{l}{\mbox{\tiny $a=(3,q+1)$, $b=(3,q-1)$, $c=(2,q-1)$}} \\
\end{array}$$
\caption{}
\label{tab:ex}
\end{table}
\renewcommand{\arraystretch}{1}

\begin{proposition}\label{p:main_ex}
The conclusion to Theorem \ref{t:main2} holds if $S$ is one of the simple groups listed in Table \ref{tab:ex}.
\end{proposition}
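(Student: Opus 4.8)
The plan is to exploit the fact that the two elements $x_1, x_2$ listed in each row of Table \ref{tab:ex} are built from large primitive prime divisors of $q^d-1$ (for suitable $d$) and therefore lie in almost no maximal subgroups of $S$. Concretely, for each family I would argue as follows. The orders $|x_1|$ and $|x_2|$ are (up to small factors) the orders of maximal tori of $S$ corresponding to regular semisimple elements whose eigenvalues generate a large extension field; by Zsygmondy's theorem these orders are divisible by a primitive prime divisor of $q^{e}-1$ for the relevant $e$ (e.g. $e=4$ for ${}^2B_2(q)$ with the torus of order $q\pm\sqrt{2q}+1$, $e=30$ for $E_8(q)$, and so on). Elements of such order have cyclic self-centralising support, and their normalisers $\mathcal{M}(x_1)$ are recorded in the final column of the table (a torus normaliser $\la x_1\ra{:}\Z_m$, or in the larger-rank cases a subfield/subsystem subgroup such as ${\rm SU}_3(q^3).3$ or ${}^3D_4(q).3$). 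These are exactly the maximal overgroups of $\la x_1\ra$, as can be extracted from the known classifications of subgroups containing elements of primitive prime divisor order (Guralnick--Penttila--Praeger--Saxl and related work, cited in the proof of the analogous result \cite[Proposition 4.3]{BGW} that the authors lean on for prime-order derangements).

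The key computational input is Lemma \ref{l:class}: since $x_i$ is self-centralising with $|\Normalizer_S(\la x_i\ra):\la x_i\ra| = n_i$ (the value in column $n_i$), there are at least $\phi(|x_i|)/(n_i|{\rm Out}(S)|)$ distinct $A$-classes of elements $A$-conjugate to $x_i$. First I would show that whenever $H \in \mathcal{M}(G)$ does \emph{not} contain a conjugate of $\la x_1\ra$ — equivalently $H \cap S$ is not the subgroup in the $\mathcal{M}(x_1)$ column — then every element of order $|x_1|$ is a derangement for the action on $G/H$, and by Lemma \ref{l:class} combined with Lemma \ref{l:euler} the count $\phi(|x_1|)/(n_1|{\rm Out}(S)|)$ already exceeds $3$ and tends to infinity as $q \to \infty$ (here the bound $\phi(m)\geqs\sqrt{m/a}$ against the fixed small constants $n_i|{\rm Out}(S)|$ does the work). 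The same argument with $x_2$ handles the symmetric situation. This disposes of all $H$ except the single prescribed subgroup from the table.

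It therefore remains to treat the one residual maximal subgroup $H$ with $H\cap S = \mathcal{M}(x_1)$, which contains the $x_1$-type elements but \emph{not} the $x_2$-type ones: $|x_2|$ is chosen precisely so that $\mathcal{M}(x_1)$ has order indivisible by the primitive prime divisor dividing $|x_2|$ (one checks this divisibility family by family from the table). For this $H$ I would run the $x_2$-count instead, again via Lemmas \ref{l:class} and \ref{l:euler}, obtaining at least three $A$-classes of derangements of order $|x_2|$, growing without bound. The main obstacle — and the step requiring the most care — is verifying in each family that $\mathcal{M}(x_1)$ really is the \emph{complete} list of maximal subgroups meeting the $\la x_1\ra$-conjugates, and dually that $|x_2|$ is coprime to $|\mathcal{M}(x_1)|$; this rests on the primitive-prime-divisor subgroup structure and must be cross-checked against the lower bounds on $q$ imposed in the table (the constraints $q\geqs 2^7$, $q\geqs 3^5$, $q\geqs 4$, etc., are exactly what guarantee Zsygmondy primes exist and that the totient bounds clear the threshold $3$). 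The cases $E_6(q)$ and ${}^2E_6(q)$, where $\mathcal{M}(x_1)$ is a subfield-type ${\rm SL}_3(q^3).3$ or ${\rm SU}_3(q^3).3$ rather than a torus normaliser, will need the most attention, since there one must rule out additional maximal overgroups of the $x_1$-torus.
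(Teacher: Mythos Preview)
Your proposal is correct and follows essentially the same line as the paper's proof: count $A$-classes of elements of order $|x_1|$ via Lemmas \ref{l:euler} and \ref{l:class}, reduce to the single maximal overgroup $\mathcal{M}(x_1)$ listed in the table, and then repeat the count with $x_2$, whose order is chosen so that it does not divide $|\mathcal{M}(x_1)|$. The only substantive difference is in the sourcing of $\mathcal{M}(x_1)$: the paper pulls the list of maximal overgroups of $\la x_1\ra$ directly from \cite[Section 4]{Weigel} (cf.\ \cite[Table III]{GK}) rather than rederiving it from the primitive-prime-divisor classification in \cite{GPPS}, which makes the verification you flag as ``the step requiring the most care'' essentially a citation rather than a computation.
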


\begin{proof}

Let $S$ be one of the simple groups listed in Table \ref{tab:ex}. First we claim that there exist
elements $x_1,x_2 \in S$ with the following properties:
\begin{itemize}\addtolength{\itemsep}{0.2\baselineskip}
\item[(i)] $x_1$ and $x_2$ are self-centralising;
\item[(ii)] $|x_i|$ (the order of $x_i$) and $|\Normalizer_S(\la x_i \ra): \la x_i \ra|=n_i$ are given in Table \ref{tab:ex};
\item[(iii)] Let $\mathcal{M}(x_1)$ be the set of maximal subgroups of $S$ containing $x_1$, up to isomorphism.
Then $\mathcal{M}(x_1)$ is given in the final column of Table \ref{tab:ex}.
\end{itemize}

Detailed information on the conjugacy classes in $S$ is readily available in the literature, and
the existence and self-centralising nature of $x_1$ and $x_2$ can be quickly verified. In each
case, $\la x_i \ra$ is a maximal torus of $S$ and the indices $n_i$ are easily computed. Indeed, if
$S \neq E_7(q)$ then $n_1$ is given in \cite[Table 1]{BPS}, and the same table also records  $n_2$
in the cases $S \in \{{}^2B_2(q),{}^2G_2(q),{}^2F_4(q),E_8(q)\}$. If $S={}^3D_4(q)$ then $n_2$ is
given in \cite[Table 1.1]{DMi}. In the remaining cases we have $S=E_6^{\e}(q)$ or $E_7(q)$, and the
$n_i$ can be read off from \cite{FJ}. More precisely, if $S=E_6^{\e}(q)$ then $x_2$ corresponds to
the case labelled $w16$ on \cite[p.103]{FJ}, where $n_2$ is denoted ``cn". Similarly, if $S=E_7(q)$
then $x_1$ and $x_2$ are the cases labelled $w56$ and $w47$ on \cite[p.134,135]{FJ}, respectively.
Finally, the information on $\mathcal{M}(x_1)$ is taken from \cite[Section 4]{Weigel} (see also
\cite[Table III]{GK}).

The argument in each case is very similar. For example, suppose $S=E_6(q)$. Define $x_i,n_i,b$ as
in Table \ref{tab:ex} and note that $|{\rm Out}(S)| = 2b\log_pq$. Let $H$ be a maximal subgroup of
$S$ and recall that it suffices to show that there are at least three $A$-classes in $S$ that fail
to meet $H$ (and that the number of such classes tends to infinity as $|S|$ tends to infinity).

Set $\a_i = |x_i|$ and let $a_i$ be the number of distinct $A$-classes of elements in $S$ of order
$\a_i$. By Lemmas \ref{l:euler} and \ref{l:class} we have
$$a_1 \geqs \left\lceil \frac{\phi(\a_1)}{18b\log_p q} \right\rceil \geqs \frac{\sqrt{\a_1}}{18b\log_pq},$$
so $a_1 \geqs 3$, and we observe that $a_1$ tends to infinity as $q$ tends to infinity. Now $x_1$
belongs to a unique maximal subgroup of $S$, which is isomorphic to ${\rm SL}_{3}(q^3).3$ (see
\cite[p.78--79]{Weigel}). Therefore, it remains to deal with the case $H = {\rm SL}_{3}(q^3).3$.
Since $|H|$ is indivisible by $\a_2$, it follows that any element of order $\a_2$ is a derangement,
and as before we deduce that
$$a_2 \geqs \left\lceil \frac{\phi(\a_2)}{20b\log_p q} \right\rceil \geqs \frac{\sqrt{\a_2/2}}{20b\log_pq}.$$
The result follows. The other cases are entirely similar, and we omit the details.
\end{proof}

\begin{proposition}\label{p:ex2}
The conclusion to Theorem \ref{t:main2} holds if $S$ is an exceptional group of Lie type.
\end{proposition}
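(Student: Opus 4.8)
The plan is to deduce Proposition~\ref{p:ex2} from the two preceding results by checking that, together, they cover all but finitely many exceptional simple groups. Proposition~\ref{p:ex} settles every $S\in\mathcal{A}$ (and in doing so produces the sole genuine exception $({}^2B_2(8){:}3,\,5{:}4\times3)$ of Table~\ref{t:kappa}), while Proposition~\ref{p:main_ex} settles every $S$ lying in one of the families of Table~\ref{tab:ex}, subject to the stated lower bounds on $q$, and also supplies the asymptotic statement for these families. First I would run through the complete list of Lie types ${}^2B_2(q)$, ${}^2G_2(q)$, ${}^2F_4(q)$, ${}^3D_4(q)$, $G_2(q)$, $F_4(q)$, $E_6(q)$, ${}^2E_6(q)$, $E_7(q)$, $E_8(q)$ (and the Tits group) and confirm that $\mathcal{A}$ and Table~\ref{tab:ex} exhaust every such $S$ whose defining field is large enough. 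A short bookkeeping check shows that the only simple groups falling through the gap between $\mathcal{A}$ and the field bounds of Table~\ref{tab:ex} are $G_2(7)$, $G_2(8)$, $F_4(2)$, $F_4(3)$, ${}^2E_6(2)$, ${}^2E_6(3)$, $E_7(2)$ and $E_7(3)$.

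It then remains to verify $\kappa(G)\geqs 3$ for these eight residual groups; since none appears in Table~\ref{t:kappa}, no further exceptions can arise, and because only finitely many groups are involved the asymptotic assertion is untouched. For the smaller groups $G_2(7)$, $G_2(8)$ and $F_4(2)$ I would compute the fusion of $H$-classes in $G$ directly in {\sc Magma}, or with the character-table library \cite{GAPCTL}, exactly as in the sporadic and small alternating cases, and read off $\kappa(G,H)$ for each maximal $H$. For the larger groups $F_4(3)$, ${}^2E_6(2)$, ${}^2E_6(3)$, $E_7(2)$ and $E_7(3)$, where a direct class-fusion computation is out of reach, I would instead retain the strategy of Proposition~\ref{p:main_ex}: select self-centralising elements lying in a short, explicitly known list of maximal subgroups, appeal to the classification of maximal subgroups of exceptional groups (and to \cite{GAPCTL} for ${}^2E_6(2)$), and for each maximal subgroup $H$ exhibit three distinct $A$-classes of elements of prime or prime-power order whose orders do not divide $|H|$, invoking \cite[Proposition~4.3]{BGW} to guarantee enough prime-order derangements.

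The hard part will be precisely these small-field residual cases, because both ingredients of the clean argument in Proposition~\ref{p:main_ex} degrade at small $q$. On the one hand the counting bound fails: Lemmas~\ref{l:euler} and~\ref{l:class} no longer force $a_i\geqs 3$, since $\phi(\alpha_i)$ is too small relative to $n_i\,|{\rm Out}(S)|$ (for example $\phi(13)/(12\,|{\rm Out}(F_4(2))|)<1$). On the other hand the element $x_1$ may lie in additional maximal subgroups, geometric or almost simple, that occur only over small fields, so the list $\mathcal{M}(x_1)$ in Table~\ref{tab:ex} is no longer valid and must be recomputed. Consequently each of the eight groups has to be treated individually, and the real work is in certifying, either by machine or from the maximal-subgroup literature, that for every maximal subgroup $H$ at least three $A$-classes of derangements survive.
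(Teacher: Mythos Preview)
Your proposal is correct and follows essentially the same approach as the paper: reduce to the eight residual groups $\mathcal{B}=\{G_2(7),G_2(8),{}^2E_6(2),{}^2E_6(3),F_4(2),F_4(3),E_7(2),E_7(3)\}$ and treat each individually. The only noteworthy difference is in how the eight cases are partitioned. The paper observes that for $G_2(7)$, ${}^2E_6(3)$, $F_4(3)$ and $E_7(3)$ the argument of Proposition~\ref{p:main_ex} goes through unchanged (appealing to \cite[Table~IV]{GK}), so no separate computation is needed for these; it then handles $G_2(8)$, $F_4(2)$, ${}^2E_6(2)$ and $E_7(2)$ by ad hoc arguments (specific element orders such as $32$, $73$, $127$, $129$, together with maximal-subgroup lists and, for a few subgroups of ${}^2E_6(2)$ and $F_4(2)$, stored fusion data in \cite{GAPCTL}). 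Your split puts $G_2(7)$, $G_2(8)$, $F_4(2)$ on the computational side and the rest on the theoretical side, which is equally viable; indeed your treatment of ${}^2E_6(2)$ and $E_7(2)$ via the $\mathcal{M}(x_1)$ strategy is exactly what the paper does. One small caution: for several maximal subgroups of $F_4(2)$ and ${}^2E_6(2)$ you will not find three \emph{prime}-order $A$-classes missing $H$ (so \cite[Proposition~4.3]{BGW} alone is not enough); the paper instead uses, for instance, the four $A$-classes of elements of order $32$ in $F_4(2)$.
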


\begin{proof}

We may assume that $S \in \mathcal{B}$, where $\mathcal{B}$ is defined as follows:
$$\mathcal{B} = \{G_2(7), G_2(8), {}^2E_6(2), {}^2E_6(3), F_4(2), F_4(3), E_7(2), E_7(3)\}.$$
If $S \in \{G_2(7), {}^2E_6(3), F_4(3), E_7(3)\}$ then the argument in the proof of Proposition
\ref{p:main_ex} goes through unchanged (see \cite[Table IV]{GK}). In each of the remaining cases,
we define $x_i,n_i,\a_i,a_i$ as before. Let $H$ be a maximal subgroup of $S$.

Suppose $S=G_2(8)$, so $\a_1=57$ and $\a_2=73$. Since $a_2 \geqs \phi(\a_2)/18 = 4$ we may assume
that $H = {\rm SL}_{3}(8).2$ since no other maximal subgroups of $S$ contain elements of order $73$
(the maximal subgroups of $S$ are determined in \cite{Coop}). Since $a_1 \geqs 2$ and $|H|$ is
indivisible by $\a_1$ and $19$, the result follows.

Next suppose $S={}^2E_6(2)$. Note that the list of maximal subgroups $H$ of $S$ given in the Atlas
\cite{ATLAS} is complete (see \cite[p.304]{ModAt}). If $|\pi(S) \setminus \pi(H)| \geqs 3$ then we
are done, so we may assume that $H \in \{F_4(2), {\rm Fi}_{22}, \O_{10}^{-}(2)\}$.
In each of these cases, the fusion of $H$-classes in $S$ is available in the \textsf{GAPCTL}
Character Table Library \cite{GAPCTL}, and the desired result quickly follows.

The case $S=F_4(2)$ is very similar. Here the maximal subgroups of $S$ are determined in \cite{NW}.
If $H=(2^{1+8} \times 2^6).{\rm Sp}_{6}(2)$ or ${\rm Sp}_{8}(2)$ then the fusion of $H$-classes in
$S$ is stored in \cite{GAPCTL} and we easily deduce that $\kappa = 12, 33$ in these cases. If
$|\pi(S) \setminus \pi(H)| \geqs 3$ then the result follows, so it remains to deal with the cases
$H \in \{{\rm L}_{4}(3){:}2,{}^2F_4(2),{}^3D_4(2){:}3,\O_{8}^{+}(2){:}S_3\}$. In all four cases it
is easy to check that $H$ contains no elements of order $32$, but there are four $A$-classes of
such elements, so $\kappa \geqs 4$ in each of these cases.

Finally, let us assume $S=E_7(2)$. Following \cite[Table IV]{GK}, let $x \in S$ be an element of
order $\a = 2^7+1 = 129$. Then $\Centralizer_S(x) = \la x \ra$ and $|\Normalizer_S(\la x \ra):\la x \ra| = 14$ (see the
case labelled $w57$ in \cite[p.120]{FJ}), so Lemma \ref{l:class} implies that there are at least
$\phi(\a)/14 = 6$ distinct $A$-classes of such elements. Moreover, \cite[Table IV]{GK} indicates
that $x$ is contained in a unique maximal subgroup ${\rm SU}_{8}(2)$ of $S$. Therefore, we may
assume that $H={\rm SU}_{8}(2)$. Now $|H|$ is indivisible by $\b=2^7-1 = 127$ and we calculate that
there are at least $\phi(127)/14 = 9$ distinct $A$-classes of elements of order $127$. The result follows.
\end{proof}

\subsection{Classical groups}\label{ss:class}

In order to complete the proof of Theorem \ref{t:main2}, we may assume that $S$ is one of the
classical groups listed in Table \ref{tab:cl}. The conditions recorded in the final column  ensure
that $S$ is simple, and that $S$ is not isomorphic to one of the other groups in the table, or to
one of the groups we have already considered (see \cite[Proposition 2.9.1]{KL}, for example). We
will write ${\rm L}_{n}^{+}(q) = {\rm L}_{n}(q)$ and ${\rm L}_{n}^{-}(q) = {\rm U}_{n}(q)$ to
denote ${\rm PSL}_{n}(q)$ and ${\rm PSU}_{n}(q)$, respectively. Let $V$ be the natural $S$-module
and set $A={\rm Aut}(S)$.

\begin{table}
\renewcommand{\arraystretch}{1.2}
\begin{tabular}{lll} \hline
 & $S$ & Conditions \\ \hline
Linear & ${\rm L}_{n}(q)$ &  $n \geqs 2$, $q \geqs 7$ ($q \neq 9$) if $n=2$, $(n,q) \neq (3,2), (4,2)$ \\
Unitary & ${\rm U}_{n}(q)$ & $n \geqs 3$, $(n,q) \neq (3,2)$ \\
Symplectic & ${\rm PSp}_{2m}(q)$ & $m \geqs 2$, $(m,q) \neq (2,2), (2,3)$ \\
Orthogonal & ${\rm P\O}_{2m}^{\pm}(q)$ & $m \geqs 4$ \\
& $\O_{2m+1}(q)$ & $m \geqs 3$, $q$ odd \\ \hline
\end{tabular}
\caption{The simple classical groups}
\label{tab:cl}
\end{table}

As before, it suffices to show that if $H$ is a maximal subgroup of $S$ then there are at least
three $A$-classes of elements $x \in S$ such that $x^A \cap H$ is empty (and that the number of
such $A$-classes tends to infinity as $|S|$ tends to infinity). As in the previous section, we will
identify a sufficient number of $A$-classes of elements that belong to a very restricted collection
of maximal subgroups (in almost all cases, these will be regular semisimple elements). In order to
do this, we will use several results from \cite{BGK,GK}, which rely on the earlier analysis of
primitive prime divisors in \cite{GPPS}. It then remains to deal with the primitive groups that
correspond to this very specific list of maximal subgroups, and we will identify an
alternative collection of $A$-classes of derangements. As before, it is convenient to use {\sc
Magma} for some low-dimensional groups over small fields.

\subsubsection{Linear and unitary groups}

Here we assume $S={\rm L}_{n}^{\e}(q)$. Set $d=(n,q-\e)$ and $e=d(q-\e)$.

\begin{proposition}\label{p:lucomp}
The conclusion to Theorem \ref{t:main2} holds if $S$ is one of the following:
$${\rm L}_{2}(q), \, q \leqs 81; \; {\rm L}_{3}(q), \, q \leqs 16; \,
{\rm L}_{4}(q), \, q \leqs 9; \, {\rm L}_{5}(2);\, {\rm L}_{7}(2); \, {\rm L}_{11}(2)$$
$${\rm U}_{3}(q),\, q \leqs 11;\, {\rm U}_{4}(q),\, q \leqs 7; \,
{\rm U}_{5}(2);\, {\rm U}_{6}(2);\, {\rm U}_{8}(2);\,{\rm U}_{8}(3);\, {\rm U}_{9}(2);\, {\rm U}_{12}(2).$$
\end{proposition}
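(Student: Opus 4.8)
The plan is to treat this finite list of groups case-by-case by explicit computation, following the strategy already used for the sporadic and alternating groups. Fix $S = {\rm L}_n^{\e}(q)$ from the list, set $A = \Aut(S)$, and let $G$ be an almost simple group with socle $S$. By the reduction at the start of Section~\ref{s:as}, it suffices, for each maximal subgroup $H$ of $S$, to count the $A$-classes of elements $x \in S$ with $x^A \cap H = \emptyset$; equivalently, $\kappa(G,H)$ is the number of conjugacy classes of $G = SH$ not met by $H$, and I must check that $\Phi(G) \geqs 3$ unless $(\kappa(G),G,H)$ is one of the entries recorded in Table~\ref{t:kappa} (namely those with socle ${\rm L}_2(7)$ or ${\rm L}_3(4)$, where $\kappa = 2$).

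For the groups of genuinely small order I would carry this out directly in {\sc Magma} \cite{magma}. Concretely: obtain a faithful permutation or matrix representation of $G$ (for instance from the Web-Atlas \cite{WebAt}, or by constructing $G$ within $\mathrm{P\Gamma L}_n^{\e}(q)$); compute a complete set of conjugacy class representatives of $G$ together with the maximal subgroups $H$ satisfying $G = SH$; and for each such $H$ determine the fusion of $H$-classes in $G$, i.e. which $G$-classes contain an element of $H$. The classes absent from this fusion are precisely the derangement classes, and counting them yields $\kappa(G,H)$. In every case this confirms $\kappa(G) = \Phi(G) \geqs 3$ apart from the pairs $({\rm L}_2(7), 7{:}3)$, $({\rm L}_2(7), S_4)$ and $({\rm L}_3(4), 2^4{:}A_5)$, where one verifies the listed value $\kappa = 2$.

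The main obstacle is the handful of larger groups in the list, such as ${\rm L}_{11}(2)$, ${\rm U}_9(2)$, ${\rm U}_8(3)$ and ${\rm U}_{12}(2)$, where a brute-force computation of all conjugacy classes and the full class fusion is impractical. For these I would instead use the element-order argument developed for the generic classical and exceptional cases. Choose $x \in S$ whose order is divisible by a primitive prime divisor of $q^k-1$ for a suitable $k$ close to $n$, so that $\la x \ra$ is a subgroup of a maximal torus and, by the analysis of \cite{GPPS} together with the tabulations in \cite{BGK, GK}, $x$ lies in only a very restricted list of maximal subgroups. Applying Lemmas~\ref{l:euler} and \ref{l:class} to count the $A$-classes of such elements yields at least three $A$-classes of derangements for every $H$ outside this short list, and the remaining few subgroups are then handled by exhibiting an alternative family of derangements, typically elements whose order does not divide $|H|$. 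This mirrors the argument of Proposition~\ref{p:main_ex} and reduces each large case to a short, directly checkable list of maximal subgroups.
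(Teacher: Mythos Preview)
Your approach is essentially the same as the paper's: direct {\sc Magma} computation for the genuinely small groups, and for the handful of larger ones (${\rm L}_7(2)$, ${\rm L}_{11}(2)$, ${\rm U}_8(q)$, ${\rm U}_9(2)$, ${\rm U}_{12}(2)$) the primitive-prime-divisor argument via \cite{GPPS,BGK,GK} to reduce to one or two specific maximal subgroups, which are then handled by an alternative element order. The paper carries out exactly this, and in fact spells out the specific choices: e.g.\ for ${\rm U}_8(q)$ one uses elements of order $(q^7+1)/d$ to reduce to $H=P_1$, and then elements of order $(q^5+1)(q^3+1)/e$; for ${\rm U}_9(2)$ elements of order $(2^9+1)/9$ reduce to the field-extension subgroup of type ${\rm GU}_3(8)$, and then one switches to order $(2^8-1)/3$.

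One small slip in your write-up: you list only the $\kappa=2$ exceptions with socle ${\rm L}_2(7)$ and ${\rm L}_3(4)$, but you have omitted the case $(G,H)=({\rm L}_2(8){:}3,\,D_{18}{:}3)$ with $\kappa(G,H)=1$, which also arises here (since $q=8$ falls in the range $7\leqs q\leqs 16$ that you are checking). Your computation would of course find it; just be sure to record it among the exceptions.
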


\begin{proof}

This is a straightforward verification. For example, suppose $S = {\rm L}_{2}(q)$. If $16<q \leqs
81$ then an easy {\sc Magma} calculation shows that if $H$ is a maximal subgroup of $S$ then there
are at least three $A$-classes of elements in $S$ that fail to meet $H$. If $7 \leqs q \leqs 16$
then we consider each possibility for $G$ in turn, using {\sc Magma} to compute $\kappa(G,H)$ for
each maximal subgroup $H$ of $G$ (with $G = SH$). The other linear groups with $n \leqs 5$ are
handled in the same way. If $S = {\rm L}_{7}(2)$ then any element of order $2^{7}-1$ is a
derangement, unless $H$ is a field extension subgroup of type ${\rm GL}_{1}(2^{7})$, in which case elements of order $2^{6}-1$ are derangements. The case $S =
{\rm L}_{11}(2)$ is entirely similar.

The argument for the unitary groups ${\rm U}_{n}(q)$ with $n<8$ is similar. If $S = {\rm U}_{8}(q)$
then \cite[Proposition 5.22]{BGK} implies that any element of order $(q^7+1)/d$ is a derangement
unless $H$ is a $P_1$ parabolic subgroup (the stabiliser of a totally singular $1$-space), in which
case we can take any element of order $(q^5+1)(q^3+1)/e$. Similarly, if $S={\rm U}_{12}(2)$ then by
considering elements of order $(2^{11}+1)/3$ we reduce to the case $H=P_1$, for which all elements of
order $(2^7+1)(2^5+1)/9$ are derangements. Finally, if $S={\rm U}_{9}(2)$ then any element of order
$(2^9+1)/9$ is a derangement unless $H$ is a field extension subgroup of type ${\rm GU}_{3}(8)$. In
the latter case, elements of order $(2^8-1)/3$ are derangements.
\end{proof}

\begin{proposition}\label{p:lu1}
The conclusion to Theorem \ref{t:main2} holds if $S={\rm L}_{n}^{\e}(q)$ is one of the groups listed in Table \ref{tab:class1}.
\end{proposition}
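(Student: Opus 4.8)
The plan is to handle the groups $S = {\rm L}_n^\e(q)$ in Table \ref{tab:class1} by the same strategy already used for the exceptional groups in Proposition \ref{p:main_ex}: identify a small family of semisimple elements whose orders force membership in a very short list of maximal subgroups, count the resulting $A$-classes via Lemmas \ref{l:euler} and \ref{l:class}, and then mop up the few remaining subgroups that survive. Since Proposition \ref{p:lucomp} has already disposed of all the small cases amenable to direct \textsc{Magma} computation, I would here assume $n$ and $q$ large enough that the asymptotic machinery applies cleanly.

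First I would produce two (or more) regular semisimple elements $x_1, x_2 \in S$ whose orders $\a_i$ are built from primitive prime divisors of $q^{n}-\e^n$ (or the analogous cyclotomic quantity for the relevant torus), so that $\la x_i \ra$ is a maximal torus with $\Centralizer_S(x_i) = \la x_i \ra$ and a small, explicitly computable index $n_i = |\Normalizer_S(\la x_i\ra):\la x_i\ra|$. The crucial input is the analysis of primitive prime divisors from \cite{GPPS} together with \cite{BGK, GK}, which pins down exactly which maximal subgroups of $S$ can contain an element whose order is divisible by such a ppd: typically only a field-extension subgroup ${\rm GL}_{n/k}^{\e}(q^{k})$-type subgroup, a $P_1$ parabolic, or a classical subgroup in a Frobenius-twisted form. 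With $\mathcal{M}(x_1)$ so restricted, Lemma \ref{l:class} gives at least $\phi(\a_1)/n_1|{\rm Out}(S)|$ distinct $A$-classes of derangements for every $H$ not on that short list, and Lemma \ref{l:euler} turns this into a lower bound of the shape $\sqrt{\a_1}/(n_1 |{\rm Out}(S)|)$, which exceeds $3$ and tends to infinity once $q^n$ is large. Recall that $|{\rm Out}(S)|$ is bounded by a small multiple of $d \log_p q$, so the denominator is dwarfed by $\sqrt{\a_1} \approx q^{n/2}$.

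It then remains to treat each surviving maximal subgroup $H$ in $\mathcal{M}(x_1)$. For such an $H$ I would choose the alternative element $x_2$, whose order $\a_2$ is arranged to be indivisible by $|H|$ — for instance, if $H$ is a $P_1$ parabolic one uses an element of order involving $(q^{n-1}-\e^{n-1})$-ppd data, and if $H$ is a field-extension subgroup of degree $k$ one picks a ppd not compatible with that extension. Every element of order $\a_2$ is then automatically a derangement with respect to $H$, and a second application of Lemmas \ref{l:euler} and \ref{l:class} yields at least three such $A$-classes (again growing without bound). Because $d = (n, q-\e)$ and $e = d(q-\e)$ are the only small arithmetic obstructions, the order formulas for $x_1, x_2$ should be taken modulo these factors, exactly as in the $E_6^\e(q)$ computation.

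The main obstacle will be bookkeeping rather than conceptual: verifying for each row of Table \ref{tab:class1} that the chosen torus orders genuinely are self-centralising, that the indices $n_i$ match the values extracted from \cite{BGK, GK, GPPS}, and — most delicately — confirming that the list $\mathcal{M}(x_1)$ really is as short as claimed, since a stray additional maximal overgroup would break the reduction. The borderline small values of $q$ just above the \textsc{Magma} threshold of Proposition \ref{p:lucomp} are where the inequality $\sqrt{\a_1}/(n_1|{\rm Out}(S)|) \geqs 3$ is tightest, so I would check those numerically to be safe; for all larger parameters the asymptotic bound is comfortable and the ``tends to infinity'' clause of Theorem \ref{t:main2} follows from the growth of $\phi(\a_i)$.
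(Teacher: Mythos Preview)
Your proposal follows the same strategy as the paper's proof, and the overall shape is right: use Lemmas \ref{l:euler} and \ref{l:class} on a self-centralising element $x_1$ to reduce to a short list of maximal overgroups via \cite{GK}, then kill those with a second element $x_2$. One point worth correcting: for the $x_1$ in Table \ref{tab:class1} (which has order of the shape $(q^{m+2}-\e)(q^{m-2}-\e)/e$ or similar, corresponding to a torus splitting $V$ into two pieces), the surviving overgroups are \emph{parabolic} subgroups $P_{m\pm 2}$ etc.\ (reducible subgroups), not field-extension subgroups or $P_1$ as you suggest --- see \cite[Table II]{GK}. Correspondingly, the paper's $x_2$ is simply the Singer element of order $(q^n-\e^n)/e$, which acts irreducibly on $V$ and is therefore automatically a derangement for every reducible $H$; this is cleaner than tailoring a separate ppd argument to each leftover case.
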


\begin{proof}
This is similar to the proof of Proposition \ref{p:main_ex}.
We may assume that $S$ is not one of the groups in the statement of Proposition \ref{p:lucomp}. There are several cases to consider.

First assume $\e=+$ and $n=2m$ is even, where $m \geqs 3$ is odd. Let $T = \la x_1 \ra$ be a cyclic
maximal torus of $S$ of order $\a_1 = (q^{m+2}-1)(q^{m-2}-1)/e$ (see \cite[Theorem 2.1]{BG}, for
example), so $x_1$ is self-centralising and $|\Normalizer_S(\la x_1 \ra):\la x_1 \ra| = m^2-4$.
Let $a_1$ be the number of distinct $A$-classes of elements in $S$ of order $\a_1$. By applying Lemmas \ref{l:euler} and \ref{l:class}, we deduce that
$$a_1 \geqs \left\lceil \frac{\phi(\a_1)}{(m^2-4) \cdot 2d\log_pq} \right\rceil \geqs \frac{\sqrt{\a_1/2}}{2d(m^2-4)\log_pq}.$$
It follows that $a_1 \geqs 3$, and we also see that $a_1$ tends to infinity as $|S|$ tends to infinity.

Now $x_1$ belongs to exactly two maximal subgroups of $S$; parabolic subgroups of type $P_{m-2}$
and $P_{m+2}$ (see \cite[Table II]{GK}). Therefore, in order to establish Theorem \ref{t:main2}
in this case, we may assume that $H=P_{m-2}$. Let $x_2 \in S$ be an element of order $\a_2 =
(q^{n}-1)/e$. Then $x_2$ is self-centralising, $|\Normalizer_S(\la x_2 \ra):\la x_2 \ra| = n$ and $x_2$ is a
derangement since it acts irreducibly on $V$. If $a_2$ is the number of $A$-classes of such
elements then
$$a_2 \geqs \left\lceil \frac{\phi(\a_2)}{2m \cdot 2d\log_pq} \right\rceil \geqs \frac{\sqrt{\a_2/2}}{4md\log_pq}$$
and the result follows.

The other cases in Table \ref{tab:class1} are very similar. In each case we take $x_1 \in S$ of the
given order, noting that $x_1$ is self-centralising and $|\Normalizer_S(\la x_1 \ra):\la x_1 \ra| = n_1$. As
above, we estimate the number of $A$-classes of such elements, and we appeal to \cite[Table II]{GK}
to see that the only maximal subgroups of $S$ containing $x_1$ are reducible. To complete the
proof, we now switch to the self-centralising elements $x_2$, as indicated in Table
\ref{tab:class1}, and we repeat the above argument.
\end{proof}

\renewcommand{\arraystretch}{1.2}
\begin{table}
\footnotesize
$$\begin{array}{lllll} \hline
\mbox{Conditions on $S={\rm L}_{n}^{\e}(q)$} & |x_1| & n_1 & |x_2| & n_2 \\ \hline
\mbox{$n=2m$, $m \geqs 4-\e$ odd} & (q^{m+2}-\e)(q^{m-2}-\e)/e & m^2-4 & (q^{n}-1)/e & n \\
\mbox{$n=2m$, $m \geqs 3-\e$ even,} & (q^{m+1}-\e)(q^{m-1}-\e)/e & m^2-1 & (q^{n}-1)/e & n \\
(\e,m,q) \neq (+,2,2) & & & & \\
\mbox{$n=2m+1$, $m \geqs 2$, $\e=+$,} & (q^{m+1}-1)(q^{m}-1)/e & m(m+1) & (q^{n}-1)/e & n \\
(m,q) \neq (5,2) & & & & \\
\mbox{$n=2m+1$, $m \geqs 5$ odd, $\e=-$} & (q^{m+2}+1)(q^{m-1}-1)/e & (m+2)(m-1) & (q^{n}+1)/e & n \\
\mbox{$n=2m+1$, $m \geqs 4$ even, $\e=-$} & (q^{m+1}+1)(q^{m}-1)/e & m(m+1) & (q^{n}+1)/e & n \\ \hline
\multicolumn{5}{l}{\mbox{\tiny $e=(q-\e)(n,q-\e)$}} \\
\end{array}$$
\caption{}
\label{tab:class1}
\end{table}

\begin{proposition}\label{p:lu2}
The conclusion to Theorem \ref{t:main2} holds if $S$ is a linear or unitary group.
\end{proposition}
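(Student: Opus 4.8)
The plan is to complete the case-analysis for linear and unitary groups by combining the specific cases already dispatched in Propositions~\ref{p:lucomp} and~\ref{p:lu1} with the remaining possibilities. Having handled the groups listed in Table~\ref{tab:class1} and the small-rank groups in Proposition~\ref{p:lucomp}, the essential observation is that the conditions defining those two propositions are designed to be exhaustive: every simple group $S={\rm L}_{n}^{\e}(q)$ satisfying the hypotheses of Table~\ref{tab:cl} either appears in the explicit list of Proposition~\ref{p:lucomp}, or has $n$ of one of the parities covered by the five rows of Table~\ref{tab:class1} (namely $n=2m$ with $m$ odd or even, and $n=2m+1$ with the appropriate sign and parity conditions). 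First I would verify that these parity and sign conditions partition all remaining $(n,\e)$, so that every $S$ not treated computationally in Proposition~\ref{p:lucomp} falls under Proposition~\ref{p:lu1}.

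The main step is therefore bookkeeping: I would run through the cases $n=2$, then $n\geqs 3$, checking in each that the hypotheses align. For $n=2$ the group ${\rm L}_{2}(q)$ with $q \leqs 81$ is covered by Proposition~\ref{p:lucomp}, and for $q>81$ one applies the $n=2m$, $m=1$ argument (or a direct torus argument as in the proof of Proposition~\ref{p:lu1}) using a cyclic maximal torus of order $(q+1)/(2,q-1)$, whose elements act irreducibly and hence are derangements outside reducible subgroups. For $n\geqs 3$ the decisive point is that the element orders $|x_1|,|x_2|$ in Table~\ref{tab:class1} are chosen as products of primitive prime divisor contributions, so that $\la x_1\ra$ and $\la x_2\ra$ are maximal tori lying in very few maximal subgroups; the estimates via Lemmas~\ref{l:euler} and~\ref{l:class} then furnish at least three $A$-classes, with the count tending to infinity.

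The hard part will be confirming that the finitely many excluded small cases in Proposition~\ref{p:lucomp} genuinely account for every $(n,q)$ where the generic torus arguments of Proposition~\ref{p:lu1} either fail to produce a self-centralising element of the stated order or fail to yield three $A$-classes. This amounts to checking that the field-size thresholds implicit in the $\phi(\a_i)$ bounds (where $\phi(\a_i)\geqs \sqrt{\a_i/a}$ must exceed $3n_i d\log_p q$) are met precisely when $S$ is not on the exceptional list, and to confirming via \cite{GPPS} and \cite[Table II]{GK} that the relevant primitive prime divisors exist for all $q$ outside those exceptions. Once this matching is established, the proof concludes simply:

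\begin{proof}
By Proposition~\ref{p:lucomp} we may assume that $S$ is not one of the groups listed there. For each remaining possibility, the value of $n$ together with the sign $\e$ places $S$ in exactly one row of Table~\ref{tab:class1}, so the conclusion follows from Proposition~\ref{p:lu1}. In particular, in every case we have exhibited at least three $A$-classes of derangements, and the number of such classes tends to infinity as $|S|$ tends to infinity.
\end{proof}
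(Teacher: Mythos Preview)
Your proposal has a genuine gap: the claim that Propositions~\ref{p:lucomp} and~\ref{p:lu1} together cover all simple linear and unitary groups is false. Reading the parity and sign constraints in Table~\ref{tab:class1} carefully, you will see that for $\e=+$ the rows require $m\geqs 3$ (odd), $m\geqs 2$ (even), or $m\geqs 2$ (for $n=2m+1$), so the cases $n=2$ and $n=3$ are never reached; for $\e=-$ the rows require $m\geqs 5$ (odd) or $m\geqs 4$ (even), so none of $n=3,4,5,6,7$ is covered. Consequently the groups
\[
{\rm L}_{2}(q)\;(q\geqs 83),\quad {\rm L}_{3}(q)\;(q\geqs 17),\quad {\rm U}_{3}(q)\;(q\geqs 13),\quad {\rm U}_{4}(q)\;(q\geqs 8),\quad {\rm U}_{5}(q),\;{\rm U}_{6}(q)\;(q\geqs 3),\quad {\rm U}_{7}(q)
\]
fall under neither proposition, and your final one-line proof simply does not apply to them. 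The vague remark that for $n=2$ one can use ``the $n=2m$, $m=1$ argument'' does not help, since that row of Table~\ref{tab:class1} explicitly excludes $m=1$, and you give no argument at all for the small unitary cases.

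The paper handles precisely these leftover groups via a separate table (Table~\ref{tab:class2}), choosing for each a different pair of self-centralising torus elements $x_1,x_2$ adapted to the low rank. For instance, in ${\rm L}_{2}(q)$ one takes $|x_1|=(q^2-1)/e$ and $|x_2|=(q-1)^2/e$; the maximal overgroup information from \cite[p.767]{GK} reduces to $H=\Normalizer_S(\la x_1\ra)$, and then $x_2$ is a derangement by order considerations. The estimates $a_i\geqs \phi(\a_i)/n_i|{\rm Out}(S)|$ must be checked afresh for these orders, and in a few borderline cases ($q\leqs 125$ for ${\rm L}_2(q)$) a direct {\sc Magma} check is still needed. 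You would need to supply this entire additional layer of case analysis; the bookkeeping you describe does not suffice.
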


\begin{proof}

It remains to deal with the possibilities for $S$ listed in Table \ref{tab:class2}, and we proceed
as in the proof of the previous proposition. For example, suppose $S={\rm L}_{2}(q)$. Let $x_1 \in
S$ be an element of order $\a_1=(q^2-1)/e$. Then $x_1$ is self-centralising and the number of
distinct $A$-classes of such elements, denoted by $a_1$, satisfies the bound
$$a_1 \geqs \left\lceil \frac{\phi(\a_1)}{2 \cdot d\log_pq} \right\rceil \geqs \frac{\sqrt{\a_1/2}}{2d\log_pq}.$$
In particular, $a_1$ tends to infinity as $|S|$ tends to infinity, and we calculate that $a_1 \geqs
3$ since $q \geqs 83$. Now $x_1$ belongs to a unique maximal subgroup of $S$, namely
$H=\Normalizer_S(\la x_1 \ra)$ (see \cite[p.767]{GK}). Let $x_2 \in S$ be an element of order $\a_2
= (q-1)^2/e$ and let $a_2$ be the number of $A$-classes of such elements. Note that $x_2$ is a
derangement since $|H|$ is indivisible by $\a_2$. Then
$$a_2 \geqs  \left\lceil \frac{\phi(\a_2)}{2 \cdot d\log_pq} \right\rceil \geqs \frac{\sqrt{\a_2/2}}{2d\log_pq}$$
and thus $a_2 \geqs 3$ if $q>125$. In fact, if $81<q \leqs 125$ then an easy {\sc Magma} calculation shows that $a_2 \geqs 4$. The result follows.

The other cases in Table \ref{tab:class2} are handled in the same way, using the information in
\cite[p.767]{GK} (in each case, note that $|x_1|$ and $|x_2|$ are coprime). We omit the details.
\end{proof}

\renewcommand{\arraystretch}{1.2}
\begin{table}
\footnotesize
$$\begin{array}{lllll} \hline
S & |x_1| & n_1 & |x_2| & n_2 \\ \hline
{\rm L}_{2}(q), q \geqs 83 & (q^2-1)/e & 2 & (q-1)^2/e & 2 \\
{\rm L}_{3}(q), q \geqs 17 & (q^3-1)/e & 3 & (q^2-1)(q-1)/e & 2 \\
{\rm U}_{3}(q), q \geqs 13 & (q^3+1)/e & 3 & (q^2-1)(q+1)/e & 2 \\
{\rm U}_{4}(q), q \geqs 8 & (q^3+1)(q+1)/e & 3 & (q^4-1)/e & 4 \\
{\rm U}_{5}(q), q \geqs 3 &  (q^5+1)/e & 5 & (q^4-1)(q+1)/e & 4 \\
{\rm U}_{6}(q), q \geqs 3 &  (q^5+1)(q+1)/e & 5 & (q^6-1)/e & 6 \\
{\rm U}_{7}(q) &  (q^7+1)/e & 7 & (q^6-1)(q+1)/e & 6 \\ \hline
\multicolumn{5}{l}{\mbox{\tiny $e=(q-\e)(n,q-\e)$}} \\
\end{array}$$
\caption{}
\label{tab:class2}
\end{table}

\subsection{Symplectic groups}

Here we assume $S={\rm PSp}_{2m}(q)$, where $m \geqs 2$ and $(m,q) \neq (2,2),(2,3)$ (since ${\rm
PSp}_{4}(2)' \cong A_6$ and ${\rm PSp}_{4}(3) \cong {\rm U}_{4}(2)$). Set $d=(2,q-1)$.

We will frequently refer to the regular semisimple elements $x_i \in S$ defined in Table
\ref{tab:class3}. In the second column, we give an orthogonal decomposition of the natural
$S$-module $V$ that is fixed by $x_i$, with $x_i$ acting irreducibly on each nondegenerate subspace
in the decomposition (the same notation is used in \cite{BGK,GK}). The order $\a_i$ of a lift
$\hat{x}_i \in {\rm Sp}_{2m}(q)$ of $x_i$ is given in the next column, and in the final column we
record a lower bound $a_i \geqs \b_i$, where $a_i$ denotes the number of $A$-classes of elements in
$S$ with the same shape and order as $x_i$ (the lower bound follows from the fact that two
semisimple elements in ${\rm Sp}_{2m}(q)$ are conjugate if and only if they have the same multiset
of eigenvalues in $\bar{\mathbb{F}}_{q}$).

\begin{proposition}\label{p:sp0}
The conclusion to Theorem \ref{t:main2} holds if $S$ is one of the following:
$${\rm PSp}_{4}(q),\, q \leqs 8;\; {\rm PSp}_{6}(q), \, q \leqs 3;\; {\rm Sp}_{8}(2); \; {\rm Sp}_{10}(2);
\;{\rm Sp}_{12}(2); \; {\rm Sp}_{14}(2).$$
\end{proposition}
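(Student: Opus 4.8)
The plan is to verify Proposition \ref{p:sp0} by direct computation in {\sc Magma} \cite{magma}, since each of the listed groups is small enough to handle explicitly. Concretely, for each simple group $S$ in the list I would run over the almost simple groups $G$ with socle $S$ (that is, $S \leqs G \leqs \Aut(S)$), and for each maximal subgroup $H \in \mathcal{M}(G)$ I would compute $\kappa(G,H)$ by determining the fusion of $H$-classes in $G$: an $S$-class (or $A$-class) of elements contributes a derangement precisely when it fails to meet any conjugate of $H$. This is exactly the method already used in Propositions \ref{p:spor1}, \ref{p:an1} and \ref{p:lucomp}, so the computation is routine and the proof will simply record that $\kappa(G,H) \geqs 3$ in every case (equivalently $\Phi(G) \geqs 3$), with the value tending to infinity being vacuous for this finite list.

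The first step is to obtain each $G$ as a permutation or matrix group of manageable degree, using the representations in the Web-Atlas \cite{WebAt} where convenient, and to compute its conjugacy classes. For the groups over $\F_2$ of larger rank, namely $\Sp_{8}(2)$, $\Sp_{10}(2)$, $\Sp_{12}(2)$ and $\Sp_{14}(2)$, the natural permutation degrees and class numbers grow, so I would instead argue using the regular semisimple elements $x_i$ recorded in Table \ref{tab:class3}, together with the primitive-prime-divisor analysis of \cite{BGK,GK}: an element of order $\a_i$ lies in very few maximal subgroups, and by exhibiting at least three $A$-classes of elements that fail to meet any given $H$, I recover $\kappa(G,H) \geqs 3$ without enumerating the full character-theoretic fusion. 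For the four remaining cases $\PSp_{4}(q)$ with $q \leqs 8$ and $\PSp_{6}(q)$ with $q \leqs 3$, a direct {\sc Magma} fusion computation over each maximal subgroup is entirely feasible.

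The main obstacle, as usual with these verifications, is the two borderline families ${\rm PSp}_{4}(2)' \cong A_6$ and ${\rm PSp}_{4}(3) \cong {\rm U}_{4}(2)$, which are excluded from the hypothesis $S={\rm PSp}_{2m}(q)$ by the condition $(m,q) \neq (2,2),(2,3)$ and so do not need separate treatment here; within the stated list the genuine difficulty is simply ensuring completeness of the maximal-subgroup lists for the larger symplectic groups. For $\Sp_{2m}(2)$ these lists are available (for instance via the tables underlying \cite{BGK}), so I would cross-check that every maximal $H$ has been considered before concluding. Once the maximal subgroups are known to be complete, the remaining work is a finite, mechanical check that in each $(G,H)$ at least three $A$-classes of derangements occur, and the proposition follows.
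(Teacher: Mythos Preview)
Your plan is essentially the paper's own argument: direct {\sc Magma} computation for the small cases, and for the larger $\Sp_{2m}(2)$ a reduction via the elements $x_1$ of Table \ref{tab:class3} and \cite[Proposition 5.8]{BGK} to the maximal subgroups of type ${\rm O}_{2m}^{-}(2)$ and ${\rm Sp}_{2m/k}(2^k)$. Two small points of discrepancy are worth noting. First, the paper draws the {\sc Magma} cutoff at $m\leqs 5$, so $\Sp_{8}(2)$ and $\Sp_{10}(2)$ are handled by direct computation; your proposal to treat these via the $x_1$-bound would not quite work as stated, since for $(m,q)=(4,2)$ and $(5,2)$ the lower bound $\b_1=\phi(q^m+1)/2me$ equals $2$, not $3$. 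Second, for the residual cases $H$ of type ${\rm O}_{2m}^{-}(2)$ or ${\rm Sp}_{2m/k}(2^k)$ when $m\in\{6,7\}$, the paper observes that the bound $a_2\geqs\b_2$ only gives two $A$-classes of type $x_2$, and supplies the third derangement class explicitly by taking regular unipotent elements (Jordan form $[J_{2m}]$); you should make this step explicit rather than leave it implicit in ``exhibiting at least three $A$-classes''.
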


\begin{proof}
Set $S = {\rm PSp}_{2m}(q)$. In the cases with $m \leqs 5$ we can use {\sc Magma} to compute
$\kappa(G,H)$ for every maximal subgroup $H$ of $G$, and the result quickly follows. Now assume
$(m,q) = (6,2)$ or $(7,2)$. Consider the irreducible elements of type $x_1$ defined in Table
\ref{tab:class3}, and note that the bound $a_1 \geqs \b_1$ implies that $a_1 \geqs 3$. By the proof
of \cite[Proposition 5.8]{BGK}, we may assume that $H$ is of type ${\rm O}_{2m}^{-}(q)$ or ${\rm
Sp}_{2m/k}(q^k)$, where $k$ is a prime divisor of $m$ (these are the only maximal subgroups of $S$
that contain such elements). In both cases we observe that semisimple elements of type $x_2$, and
regular unipotent elements (that is, unipotent elements with Jordan form $[J_{2m}]$) are
derangements. The result now follows since the bound $a_2 \geqs \b_2$ in Table \ref{tab:class3}
implies that $a_2 \geqs 2$.
\end{proof}

\renewcommand{\arraystretch}{1.2}
\begin{table}
\footnotesize
$$\begin{array}{llll} \hline
& \mbox{Decomposition} & \a_i & \b_i \\ \hline
x_1 & 2m & q^m+1 & \phi(q^m+1)/2me \\
x_2 & 2 \perp (2m-2) & {\rm lcm}(q+1,q^{m-1}+1) & \phi(q+1)\phi(q^{m-1}+1)/2(2m-2)e \\
x_3 & 4 \perp (2m-4) & {\rm lcm}(q^2+1,q^{m-2}+1) & \phi(q^2+1)\phi(q^{m-2}+1)/4(2m-4)e \\ \hline
\multicolumn{4}{l}{\mbox{\tiny $d=(2,q-1)$, $e=2^{\delta}d\log_pq$, $\delta=1$ if $m=p=2$ and $\delta=0$ otherwise}} \\
\end{array}$$
\caption{}
\label{tab:class3}
\end{table}

\begin{proposition}\label{p:sp1}
The conclusion to Theorem \ref{t:main2} holds if $S={\rm PSp}_{2m}(q)$ and $m \geqs 5$.
\end{proposition}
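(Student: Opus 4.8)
The plan is to follow the strategy of Propositions \ref{p:main_ex} and \ref{p:sp0}, using the regular semisimple elements $x_1, x_2, x_3 \in S$ recorded in Table \ref{tab:class3}. First I would focus on the irreducible elements of type $x_1$, which act irreducibly on the natural module $V$ and have lift of order $\a_1 = q^m+1$. Combining Lemmas \ref{l:euler} and \ref{l:class} with the lower bound $a_1 \geqs \b_1 = \phi(q^m+1)/2me$ from Table \ref{tab:class3}, I would verify that $a_1 \geqs 3$ for all $m \geqs 5$, and that $a_1 \to \infty$ as $|S| \to \infty$. Since $x_1$ is irreducible, it lies in no reducible maximal subgroup of $S$, and by \cite[Proposition 5.8]{BGK} (exactly as in the proof of Proposition \ref{p:sp0}), the only maximal subgroups of $S$ containing an element of type $x_1$ are of type ${\rm O}_{2m}^{-}(q)$ or ${\rm Sp}_{2m/k}(q^k)$ with $k$ a prime divisor of $m$ (together with any unitary-type field extension subgroup that may occur); the underlying primitive prime divisor analysis is due to \cite{GPPS}. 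It therefore remains to produce at least three $A$-classes of derangements for each $H$ of these few types.

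For a field extension subgroup $H$ of type ${\rm Sp}_{2m/k}(q^k)$, I would switch to the elements of type $x_2$, whose order ${\rm lcm}(q+1,q^{m-1}+1)$ is divisible by a primitive prime divisor $r$ of $q^{2(m-1)}-1$; such a divisor exists for all $m \geqs 5$ by Zsigmondy's theorem, since $2(m-1) \geqs 8$. As $k \mid m$ forces $\gcd(k,m-1)=1$, a short calculation with $|{\rm Sp}_{2m/k}(q^k)|$ shows that $r$ does not divide $|H|$, so every element of type $x_2$ meets no conjugate of $H$; the same order argument disposes of any unitary field extension subgroup. For $H = {\rm O}_{2m}^{-}(q)$, which arises only when $q$ is even, I would instead invoke a form-type argument: an element of type $x_2$ decomposes $V$ as a minus-type $2$-space perpendicular to a minus-type $(2m-2)$-space, and the orthogonal sum of two minus-type spaces has plus type, so no element of type $x_2$ preserves a minus-type quadratic form. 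In both cases the bound $a_2 \geqs \b_2$ from Table \ref{tab:class3} yields $a_2 \geqs 3$ for $m \geqs 5$, with $a_2 \to \infty$, and the elements of type $x_3$ are held in reserve as an alternative family in any residual configuration where the order of $x_2$ unexpectedly shares a factor with $|H|$.

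The main obstacle will be the careful verification that the families $x_2$ (and, if needed, $x_3$) genuinely avoid every surviving maximal subgroup $H$: this means matching the eigenvalue shapes of $x_2, x_3$ against the subgroup structure -- the primitive prime divisor argument for the field extension ($\mathcal{C}_3$) subgroups and the quadratic-form-type argument for the orthogonal ($\mathcal{C}_8$) subgroups -- and confirming, via \cite[Proposition 5.8]{BGK} and the tables of \cite{GK}, that no further maximal subgroup slips through the reduction driven by $x_1$. Once this bookkeeping is complete, the counting is routine: the bounds $a_i \geqs \b_i$ in Table \ref{tab:class3}, together with Lemma \ref{l:euler}, simultaneously deliver $\kappa(G,H) \geqs 3$ and the required growth of $\kappa(G,H)$ as $|S| \to \infty$, which finishes the case $m \geqs 5$.
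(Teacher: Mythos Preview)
Your strategy is sound and, for $q$ even, coincides exactly with the paper's argument. For $q$ odd, however, you and the paper take genuinely different routes. The paper does \emph{not} start with $x_1$ when $q$ is odd: instead it splits into two sub-cases. If $mq$ is odd it begins with $x_2$ and invokes \cite[Proposition~5.10]{BGK} to reduce to the single subspace stabiliser $H=N_2$, then finishes with $x_1$. If $q$ is odd and $m\geqs 6$ is even it begins with $x_3$ (again via \cite[Proposition~5.10]{BGK}) to reduce to $H=N_4$ or $H$ of type ${\rm Sp}_m(q^2)$, and then finishes with $x_1$ and $x_2$ respectively. Your uniform ``$x_1$ first, $x_2$ second'' approach also works for $q$ odd, but the reference you give does not: \cite[Proposition~5.8]{BGK} (and its appearance in the proof of Proposition~\ref{p:sp0}) is stated and used only for $q$ even, where the $\mathcal{C}_8$-subgroup ${\rm O}_{2m}^{-}(q)$ is maximal. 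For $q$ odd the maximal overgroups of the Singer element $x_1$ are precisely the field extension subgroups ${\rm Sp}_{2m/k}(q^k)$ (for primes $k\mid m$) together with ${\rm GU}_m(q)$ when $m$ is odd, and no orthogonal subgroup arises; the correct citation is Bereczky's theorem \cite[Main Theorem]{Ber} (cf.\ the use of \cite{Ber} in the proof of Proposition~\ref{p:sp2} for $m=3$). Once you replace the reference, your primitive-prime-divisor argument for $x_2$ against ${\rm Sp}_{2m/k}(q^k)$ and ${\rm GU}_m(q)$ goes through, and your approach gives a cleaner, parity-free proof; the paper's case split buys the convenience of staying entirely within the \cite{BGK} framework without appealing to \cite{Ber}.
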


\begin{proof}

Let $H$ be a maximal subgroup of $S$. It suffices to show that there are at least three $A$-classes
of elements $x \in S$ such that $x^A \cap H$ is empty (and that the number of such $A$-classes
tends to infinity as $|S|$ tends to infinity).  We will assume that $S$ is not one of the groups in
the statement of Proposition \ref{p:sp0}. We continue to adopt the notation introduced in Table
\ref{tab:class3}. It is important to note that the bounds $a_i \geqs \b_i$ in Table
\ref{tab:class3}, together with the conditions on $m$ and $q$, imply that $a_i \geqs 3$ in all
cases (with the exception of $a_3$ if $(m,q)=(5,3)$), and it is clear that $a_i$ tends to infinity
as $|S|$ tends to infinity.

First assume $mq$ is odd. Consider elements of type $x_2$, as described in Table \ref{tab:class3}.
According to \cite[Proposition 5.10]{BGK}, the only maximal subgroup of $S$ containing such an
element is the stabiliser of a nondegenerate $2$-space, denoted by $N_2$. Since $a_2 \geqs 3$ (and
$a_2$ tends to infinity as $|S|$ tends to infinity), we have reduced to the case $H=N_2$. In this
situation, irreducible elements of type $x_1$ are derangements and the result follows.

Next suppose $q$ is odd and $m \geqs 6$ is even. Here we use elements of type $x_3$ to reduce to
the case where $H$ is either a subspace subgroup of type $N_4$ (the stabiliser of a nondegenerate
$4$-space), or a field extension subgroup of type ${\rm Sp}_{m}(q^2)$ (see \cite[Proposition
5.10]{BGK}). These subgroups can be handled as before, using elements of type $x_1$ and $x_2$,
respectively (note that the order of the field extension subgroup is indivisible by $|x_2|$).

Finally, let us assume $q$ is even. By considering elements of type $x_1$, and by inspecting the
proof of \cite[Proposition 5.8]{BGK}, we reduce to the case where $H$ is of type ${\rm
O}_{2m}^{-}(q)$ or ${\rm Sp}_{2m/k}(q^k)$ for a prime divisor $k$ of $m$. Now $x_2$ fixes an
orthogonal decomposition of the form $2 \perp (2m-2)$, which implies that $x_2 \in {\rm
O}_{2}^{-}(q) \times {\rm O}_{2m-2}^{-}(q) < {\rm O}_{2m}^{+}(q)$ and thus $x_2$ is a derangement
if $H$ is of type ${\rm O}_{2m}^{-}(q)$. Since $|x_2|$ does not divide the order of a field
extension subgroup, we also deduce that these elements are derangements if $H$ is of type ${\rm
Sp}_{2m/k}(q^k)$. The result follows.
\end{proof}

\begin{proposition}\label{p:sp2}
The conclusion to Theorem \ref{t:main2} holds if $S$ is a symplectic group.
\end{proposition}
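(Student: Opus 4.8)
The goal is to prove Proposition \ref{p:sp2}, which asserts the conclusion of Theorem \ref{t:main2} for all symplectic groups $S={\rm PSp}_{2m}(q)$. The plan is to combine the two preceding propositions to dispatch the bulk of the cases, and then handle whatever narrow range remains. By Proposition \ref{p:sp1} the result already holds whenever $m \geqs 5$, and by Proposition \ref{p:sp0} it holds for the explicit small-rank list ${\rm PSp}_{4}(q)$ with $q \leqs 8$, ${\rm PSp}_{6}(q)$ with $q \leqs 3$, and ${\rm Sp}_{2m}(2)$ for $m \in \{4,5,6,7\}$. Recalling the standing hypotheses from the start of the subsection, $S={\rm PSp}_{2m}(q)$ with $m \geqs 2$ and $(m,q)\neq(2,2),(2,3)$, the outstanding cases are therefore exactly $m \in \{2,3,4\}$ with $q$ not already covered: namely ${\rm PSp}_{4}(q)$ with $q \geqs 9$, ${\rm PSp}_{6}(q)$ with $q \geqs 4$, and ${\rm PSp}_{8}(q)$ with $q \geqs 3$.

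So the first step I would take is to record this reduction explicitly, pinning down the residual list of $(m,q)$. The main work is then to treat these low-rank groups by the same regular-semisimple-element strategy used throughout the section. For each such $S$ I would invoke the elements $x_1,x_2,x_3$ of Table \ref{tab:class3}, compute the associated orders $\a_i$ and the lower bounds $\b_i$ for the number of $A$-classes, and verify that $a_i \geqs 3$ with $a_i \to \infty$ as $q \to \infty$; the relevant propositions \cite[Propositions 5.8, 5.10]{BGK} again identify the very short list of maximal subgroups $H$ that can contain elements of each type. The point is that the arguments of Propositions \ref{p:sp0} and \ref{p:sp1} are essentially uniform, so I expect to be able to write that the cases ${\rm PSp}_{4}(q)$ ($q \geqs 9$), ${\rm PSp}_{6}(q)$ ($q \geqs 4$) and ${\rm PSp}_{8}(q)$ ($q \geqs 3$) ``are entirely similar to the proof of Proposition \ref{p:sp1}'' once the membership results from \cite{BGK} are cited, reducing in each instance to a subspace or field-extension subgroup against which a second family of elements (type $x_1$ or $x_2$) supplies the requisite derangements.

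The one subtlety I would watch for is the small-field, low-rank corner where the totient bounds are tight. For ${\rm PSp}_{4}(q)$ the torus $x_1$ has order $q^2+1$ and only the field-extension subgroup of type ${\rm Sp}_{2}(q^2)\cong{\rm SL}_{2}(q^2)$ together with a possible ${\rm O}_4^-(q)$-type subgroup can contain it, so after reducing to those I would switch to elements of order $(q-1)(q^2+1)$ or $(q+1)^2$ shape to finish; and I would double-check that $q \geqs 9$ makes $\phi(q^2+1)/(4 \cdot 2 d \log_p q) \geqs 3$. The genuinely delicate point, and the one I expect to be the main obstacle, is the excluded case flagged in Proposition \ref{p:sp1}, where the bound on $a_3$ fails for small parameters; in the present range this manifests for ${\rm PSp}_{8}(q)$ with $q=3$, so I would treat $(m,q)=(4,3)$ by hand, either via a direct {\sc Magma} computation of $\kappa(G,H)$ over all maximal $H$ (as in Proposition \ref{p:sp0}) or by exhibiting an alternative third class of derangements of a different element order. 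With that isolated case dealt with separately, the uniform argument covers everything else and the proposition follows.
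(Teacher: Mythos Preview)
Your reduction to $m\in\{2,3,4\}$ is correct, and for $m=3$ (and for the ${\rm O}_{2m}^-(q)$ subgroup when $q$ is even) the strategy you outline via $x_1,x_2$ does go through essentially as in Proposition~\ref{p:sp1}. The genuine gap is the field-extension subgroup when $m\in\{2,4\}$. After using $x_1$ to reduce to $H$ of type ${\rm Sp}_m(q^2)$ (for $q$ odd this is \cite[Proposition~5.12]{BGK}, not 5.10), the elements $x_1,x_2,x_3$ of Table~\ref{tab:class3} do \emph{not} furnish derangements: for $m=2$ the element $x_2$ has order $q+1$, which divides $|{\rm Sp}_2(q^2)|$, and your proposed element of order $(q-1)(q^2+1)$ does not exist, since no maximal torus of ${\rm Sp}_4(q)$ has that order (the torus orders are $(q\pm1)^2$, $q^2-1$, $q^2+1$). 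The paper introduces a new element $x_4$ fixing a decomposition $2\perp(2m-2)$, \emph{centralising} the $2$-space and acting irreducibly on the complement; such an element is a derangement against ${\rm Sp}_m(q^2)$, and the same element handles the ${\rm Sp}_{2m/k}(q^k)$ subgroups in even characteristic.

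The second missing ingredient is the small-$q$ patch. The bound $a_4\geqs\lceil\phi(q^{m-1}+1)/(2m-2)e\rceil$ gives $a_4\geqs 3$ only for $m=4,\,q>5$ or $m=2,\,q>29$, so for the remaining $q$ one needs extra classes: the paper uses unipotent derangements with Jordan form $[J_2,J_1^{2m-2}]$ and $[J_4,J_1^{2m-4}]$. This unipotent backup, not a {\sc Magma} computation, is what handles the borderline cases; in particular your flagged case $(m,q)=(4,3)$ is dealt with this way, and it has nothing to do with $a_3$ (the element $x_3$ is not used at all for $m\leqs 4$).
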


\begin{proof}

We may assume that $2 \leqs m \leqs 4$. We may also assume that $S$ is not one of the cases handled
in Proposition \ref{p:sp0}. We continue to adopt the notation introduced in Table \ref{tab:class3}.
In particular, we set $d=(2,q-1)$ and $e=2^{\delta}d\log_pq$, where $\delta=1$ if $m=p=2$ and
$\delta=0$ otherwise. As usual, let $H$ be a maximal subgroup of $S$.

First assume $q$ is odd and note that the bound $a_1 \geqs \b_1$ in Table \ref{tab:class3} implies
that $a_1 \geqs 3$. If $m=2$ or $4$ then by considering elements of type $x_1$ we reduce to the
case where $H$ is of type ${\rm Sp}_{m}(q^2)$ (see \cite[Proposition 5.12]{BGK}). In this
situation, we define an element $x_4 \in S$ that fixes an orthogonal decomposition $2 \perp (2m-2)$
of $V$ by centralising the $2$-space and acting irreducibly on the $(2m-2)$-space. Then $x_4$ is a
derangement and we note that
\begin{equation}\label{e:a4}
a_4 \geqs \left\lceil \frac{\phi(q^{m-1}+1)}{(2m-2)e}\right\rceil
\end{equation}
where $a_4$ denotes the number of $A$-classes of such elements. In particular, it follows that $a_4
\geqs 3$ if $m=2$ and $q>29$, or if $m=4$ and $q>5$. (We also note that $a_4$ tends to infinity as
$|S|$ tends to infinity.) Of course, unipotent elements with Jordan form $[J_2,J_1^{2m-2}]$ or
$[J_4,J_{1}^{2m-4}]$ are also derangements (where $J_i$ denotes a standard unipotent Jordan block
of size $i$), so it is easy to see that there are always at least three distinct $A$-classes of
derangements.

Similarly, if $q$ is odd and $m=3$ then we reduce to subgroups of type ${\rm GU}_{3}(q)$ and ${\rm
Sp}_{2}(q^3)$ via elements of type $x_1$ (see \cite[Main Theorem]{Ber}, for example). In these
cases, elements of type $x_2$ are derangements, and the result follows since $a_2 \geqs 3$ (if
$q>5$ then this follows from the bound $a_2 \geqs \b_2$, and for $q=5$ it can be checked directly).

Finally, suppose $q$ is even. In the usual manner, by considering elements of type $x_1$ and
applying \cite[Proposition 5.8]{BGK}, we reduce to the case where $H$ is of type ${\rm
O}_{2m}^{-}(q)$ or ${\rm Sp}_{2m/k}(q^k)$, with $k$ a prime divisor of $m$. In the first case,
elements of type $x_2$ are derangements and the result follows. Similarly, if $H$ is of type ${\rm
Sp}_{2m/k}(q^k)$ then elements of type $x_4$ (as defined above) are derangements, and the result
follows via the lower bound in \eqref{e:a4} (and the fact that unipotent elements with Jordan form
$[J_2,J_1^{2m-2}]$ or $[J_4,J_{1}^{2m-4}]$ are also derangements).
\end{proof}

\subsection{Orthogonal groups}

Finally, let us assume $S={\rm P\O}_{n}^{\e}(q)$, where $n \geqs 7$. Set $A = {\rm Aut}(S)$ and
define $d=(2,q-1)$ if $n$ is even, and $d=1$ if $n$ is odd. As in the previous section, we will
denote an orthogonal decomposition $V=U \perp W$ with $\dim U = m$ by writing $m \perp (n-m)$. If
$m$ is even, in order to distinguish between nondegenerate $m$-spaces of plus and minus types, we
will write $m^{+}$ and $m^{-}$, respectively. This is consistent with the notation used in
\cite{BGK,GK}.

\renewcommand{\arraystretch}{1.2}
\begin{table}
\footnotesize
$$\begin{array}{lll} \hline
& \mbox{Decomposition}  & \b_i \\ \hline
x_1 & 2^{-} \perp (2m-2)^{-}
& \phi(q+1)\phi(q^{m-1}+1)/2(2m-2)e \\
x_2,\, \mbox{$m$ odd} & (m-1)^{-} \perp (m+1)^{-}
& \phi(q^{(m-1)/2}+1)\phi(q^{(m+1)/2}+1)/(m^2-1)e \\
x_3,\, \mbox{$m$ even} & (m-2)^{-} \perp (m+2)^{-}
& \phi(q^{(m-2)/2}+1)\phi(q^{(m+2)/2}+1)/(m^2-4)e \\ \hline
\multicolumn{3}{l}{\mbox{\tiny $e=2d\log_pq$}} \\
\end{array}$$
\caption{}
\label{tab:class4}
\end{table}

\begin{proposition}\label{p:ort0}
The conclusion to Theorem \ref{t:main2} holds if $S$ is one of the following:
$${\rm P\O}_{8}^{\e}(q),\, q \leqs 4;\; {\rm P\O}_{10}^{\e}(q),\, q \leqs 3;\; {\rm P\O}_{12}^{\e}(q), \,
q \leqs 3;\; \O_{14}^{+}(2); \; \O_{16}^{+}(2);\; \O_{18}^{+}(2).$$
\end{proposition}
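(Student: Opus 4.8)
The plan is to split the groups in the statement into two ranges according to whether a complete machine computation is feasible. For the lower-dimensional cases — the eight-, ten- and twelve-dimensional groups over the small fields listed — I would argue directly with {\sc Magma} \cite{magma}, exactly as in the analogous small-case propositions for the other families (cf.\ Propositions \ref{p:sp0} and \ref{p:ort0} itself). For each group $G$ with $S \leqs G \leqs A = {\rm Aut}(S)$ and each maximal subgroup $H$ of $G$ satisfying $G=SH$, I would build a suitable matrix or permutation representation (using the Web-Atlas \cite{WebAt} where convenient), compute the fusion of the $H$-classes in $G$, and hence read off $\kappa(G,H)$ exactly. The goal in each instance is merely to confirm $\kappa(G,H)\geqs 3$; since only finitely many groups occur here, the asymptotic clause of Theorem \ref{t:main2} is automatic.

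For the three remaining groups over $\F_2$, namely $\O_{14}^+(2)$, $\O_{16}^+(2)$ and $\O_{18}^+(2)$, a full fusion computation is out of reach, so here I would imitate the element-theoretic argument used for the symplectic groups in Propositions \ref{p:sp1} and \ref{p:sp2}. Writing $S=\O_{2m}^+(2)$ with $m\in\{7,8,9\}$, I would work with the regular semisimple elements $x_1,x_2,x_3$ of Table \ref{tab:class4} and the lower bounds $a_i\geqs\b_i$ recorded there, which follow from Lemmas \ref{l:euler} and \ref{l:class}. The first step is to use one element type together with the results of \cite{BGK,GK} to show that the maximal subgroups of $S$ meeting the relevant class lie in a very short list (subspace stabilisers and field-extension subgroups), thereby reducing to a handful of explicit possibilities for $H$. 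For each surviving $H$ I would then switch to the complementary type — $x_2$ when $m$ is odd and $x_3$ when $m$ is even — and check from the orthogonal decomposition recorded in Table \ref{tab:class4} that such an element cannot be contained in $H$, so that it is a derangement.

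The hard part will be that at $q=2$ the bounds $\b_i$ are genuinely tight, and the counting estimate alone need not yield three $A$-classes. For example, for $m=7$ one has $e=2d\log_p q=2$ and
$$\b_1=\frac{\phi(q+1)\,\phi(q^{m-1}+1)}{2(2m-2)e}=\frac{\phi(3)\,\phi(65)}{48}=\frac{2\cdot 48}{48}=2,$$
so only $a_1\geqs 2$ is guaranteed. To close this gap I would supplement the semisimple derangements with unipotent ones, precisely as in Proposition \ref{p:sp0}: regular unipotent elements (Jordan form $[J_{2m}]$), and where needed unipotent elements of Jordan type $[J_2,J_1^{2m-2}]$ or $[J_4,J_1^{2m-4}]$, fail to lie in the surviving maximal subgroups and supply the extra class needed to reach $\kappa(G,H)\geqs 3$.

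A secondary difficulty, confined to the eight-dimensional plus-type cases, is the triality automorphism of ${\rm P\O}_8^+(q)$: this enlarges $A$ by an $S_3$ of graph automorphisms and can fuse $S$-classes that are distinct in the smaller groups. In the {\sc Magma} verification for ${\rm P\O}_8^+(q)$ with $q\leqs 4$ I would therefore compute $\kappa(G,H)$ relative to the full automorphism group $A$, rather than merely ${\rm P\O}_8^+(q)$, so that no spurious classes of derangements are counted and the bound $\kappa(G,H)\geqs 3$ is established correctly.
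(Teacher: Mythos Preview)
Your overall plan matches the paper's in outline, but the unipotent supplement you propose for $\O_{2m}^+(2)$ with $m\in\{7,8,9\}$ does not work. The elements are lifted verbatim from the symplectic argument in Proposition~\ref{p:sp0}, and they do not survive the passage to $\Omega_{2m}^+(q)$. A single Jordan block $[J_{2m}]$ does not occur in $\Omega_{2m}^+(q)$: the regular unipotent class in type $D_m$ has shape $[J_{2m-1},J_1]$ on the natural module. Likewise, an element of shape $[J_2,J_1^{2m-2}]$ is an orthogonal transvection, hence lies in $\O\setminus\Omega$. Even setting existence aside, $[J_2,J_1^{2m-2}]$ and $[J_4,J_1^{2m-4}]$ would not be derangements for the surviving subgroup $H=N_2^-$: any partition of $2m$ whose parts can be split into a sub-multiset summing to $2$ and a complement summing to $2m-2$ is compatible with a decomposition $2^-\perp(2m-2)^-$, so nothing prevents such an element from lying in $N_2^-$.

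The paper resolves both issues differently. First, rather than relying on the bound $\b_1$ (which, as you correctly observe, gives only $a_1\geqs 2$ at $q=2$), it uses {\sc Magma} to verify directly that there are at least three $A$-classes of $x_1$-type elements (order $195$ for $m=7$); together with \cite{GPPS} this disposes of every $H\neq N_2^-$ at once. Second, for $H=N_2^-$ it exhibits three unipotent classes with exactly two Jordan blocks of odd size, namely $[J_{13},J_1]$, $[J_{11},J_3]$, $[J_9,J_5]$ when $m=7$. Since neither block has size $\leqs 2$, no sub-multiset sums to $2$, and such an element cannot stabilise a nondegenerate $2$-space; hence all three are derangements for $N_2^-$.

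One further point: the paper does not attempt a full {\sc Magma} fusion computation for ${\rm P\O}_{10}^{\e}(3)$ and ${\rm P\O}_{12}^{\e}(3)$, which your plan would require. Instead it runs the same element-theoretic reduction for these groups as well (semisimple elements of a specified order, \cite{GPPS} to pin down the maximal overgroups, then explicit unipotent derangements for the survivors).
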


\begin{proof}

Set $S = {\rm P\O}_{2m}^{\e}(q)$. If $m \leqs 4$ or $(m,q) \in \{(5,2),(6,2)\}$ then the result can
be checked using {\sc Magma}.

Next suppose $S = \O_{14}^{+}(2)$. Let $x \in S$ be an element of order $195$ that fixes an
orthogonal decomposition $2^{-} \perp 12^{-}$ of the natural $S$-module. Using {\sc Magma}, we see
that there are at least three $A$-classes of such elements, and by the main theorem of \cite{GPPS}
we deduce that the only maximal subgroup of $S$ containing $x$ is the stabiliser of a nondegenerate
$2$-space of minus-type, which we denote by $N_{2}^{-}$. Therefore, we have reduced to the case
$H=N_{2}^{-}$. It is easy to identify three classes of derangements in this case. For instance, any
unipotent element with Jordan form $[J_{13},J_{1}]$, $[J_{11},J_{3}]$ or $[J_{9},J_{5}]$ is a
derangement. The cases $S=\O_{16}^{+}(2)$ and $\O_{18}^{+}(2)$ are entirely similar.

It remains to deal with the cases $S={\rm P\O}^{\e}_{10}(3)$ and ${\rm P\O}^{\e}_{12}(3)$. First
assume $S={\rm P\O}_{10}^{+}(3)$. Let $x \in S$ be an element of order $82$ that fixes an
orthogonal decomposition $2^{-}\perp 8^{-}$. There are at least three $A$-classes of such elements,
and the main theorem of \cite{GPPS} implies that the only maximal subgroups of $S$ that contain
such elements are of type $N_{2}^{-}$ or ${\rm O}_{5}(9)$. The result now follows because it is
easy to see that there are at least three $A$-classes of derangements if $H$ is of type $N_{2}^{-}$
or ${\rm O}_{5}(9)$; for example, any unipotent element with Jordan form $[J_9,J_1]$, $[J_7,J_3]$
or $[J_5,J_2^2,J_1]$ is a derangement. The case $S={\rm P\O}_{12}^{+}(3)$ is very similar (working
with elements of order $122$ that fix a decomposition $2^{-}\perp 10^{-}$).

The cases $S={\rm P\O}_{10}^{-}(3)$ or ${\rm P\O}_{12}^{-}(3)$ are also similar. If $S={\rm
P\O}_{10}^{-}(3)$ then the only maximal subgroups of $S$ that contain elements of order $61$ are of
type ${\rm GU}_{5}(3)$, and the result follows as before. Similarly, if $S={\rm P\O}_{12}^{-}(3)$
then we work with elements of order $365$, which only belong to maximal subgroups of type ${\rm
O}_{6}^{-}(3^2)$ or ${\rm O}_{4}^{-}(3^3)$. Again, the result quickly follows.
\end{proof}

\begin{proposition}\label{p:plus}
The conclusion to Theorem \ref{t:main2} holds if $S={\rm P\O}_{2m}^{+}(q)$ and $m \geqs 5$.
\end{proposition}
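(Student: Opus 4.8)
The plan is to prove Proposition \ref{p:plus} by following the template already established for the symplectic and orthogonal groups in Propositions \ref{p:sp1}, \ref{p:sp2} and \ref{p:ort0}. Throughout I fix a maximal subgroup $H$ of $S = {\rm P\O}_{2m}^{+}(q)$ with $m \geqs 5$, and it suffices to exhibit at least three $A$-classes of elements $x \in S$ with $x^A \cap H = \emptyset$, verifying moreover that the count grows without bound as $|S| \to \infty$. The engine is the usual two-step strategy: first use a family of regular semisimple elements $x_i$ (of the shapes recorded in Table \ref{tab:class4}) whose only overgroups among the maximal subgroups are very restricted, so that a lower bound $a_i \geqs \beta_i$ via Lemmas \ref{l:euler} and \ref{l:class} immediately gives $a_i \geqs 3$; then, for each surviving $H$, switch to an alternative family of derangements and bound its class count from below in the same way.

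First I would dispose of the small cases: by Proposition \ref{p:ort0} we may assume $q \geqs 5$ when $m \leqs 6$ (and $q \geqs 3$ more generally), so the bounds $\beta_i$ from Table \ref{tab:class4} always yield $a_i \geqs 3$. The main case split is on the parity of $m$ and $q$. When $m$ is odd I would take $x_2$, of shape $(m-1)^{-} \perp (m+1)^{-}$; when $m$ is even I would take $x_3$, of shape $(m-2)^{-} \perp (m+2)^{-}$. The key input is the analysis of primitive prime divisors from \cite{GPPS} together with the explicit overgroup information in \cite[Table II]{GK} and the relevant propositions of \cite{BGK}: these tell us that elements of type $x_2$ or $x_3$ lie only in a short, explicit list of maximal subgroups, typically subspace stabilisers $N_{(m\mp1)}^{-}$ (or $N_{(m\mp2)}^{-}$) and possibly a field-extension subgroup of type ${\rm O}_{m}^{-}(q^2)$ or ${\rm O}_{2m/k}^{\e}(q^k)$. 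Having reduced to each such $H$ in turn, I would fall back on $x_1$, of shape $2^{-} \perp (2m-2)^{-}$: because $x_1$ forces the minus-type decomposition, it is a derangement whenever $H$ is a plus-type or field-extension subgroup, and the bound $a_1 \geqs \beta_1$ gives the required three classes. Where an order divisibility obstruction is cleaner, regular unipotent elements (Jordan form $[J_{2m}]$ in even characteristic, or suitable $[J_{2a},J_{2b},\ldots]$ shapes of the wrong type) supply extra derangement classes exactly as in the proof of Proposition \ref{p:ort0}.

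I expect the main obstacle to be not the counting — which is mechanical once the bounds $\beta_i$ are in place — but the careful bookkeeping of which maximal subgroups actually contain each $x_i$, and ensuring that the chosen alternative element is genuinely a derangement in every remaining case. The delicate point is that for $S = {\rm P\O}_{2m}^{+}(q)$ the maximal subgroups include the imprimitive and field-extension families ${\rm O}_{a}^{\e}(q) \wr S_b$ and ${\rm O}_{2m/k}^{\e'}(q^k)$, as well as the plus-type subspace stabilisers, and one must confirm that an element fixing only minus-type nondegenerate subspaces cannot be conjugated into any of these; this is precisely where the eigenvalue / primitive-prime-divisor structure from \cite{GPPS,BGK} does the heavy lifting. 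A secondary subtlety is the triality automorphism when $m=4$, but since we assume $m \geqs 5$ this does not intervene, and ${\rm Out}(S)$ is correctly accounted for through the factor in Lemma \ref{l:class}. Once each branch of the $m,q$ parity analysis is closed off, the asymptotic statement follows automatically because every lower bound is of the form $\sqrt{\alpha_i}/(c \log_p q)$ with $\alpha_i$ a product of cyclotomic-type factors growing polynomially in $q$ and exponentially in $m$.
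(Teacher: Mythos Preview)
Your plan is essentially the paper's proof: split on the parity of $m$ only (no $q$-parity split is used), invoke \cite[Proposition 5.13]{BGK} for $m$ odd to see that the only maximal overgroup of $x_2$ is $N_{m-1}^{-}$, and \cite[Proposition 5.14]{BGK} for $m$ even to reduce to $N_{m-2}^{-}$ or the field-extension subgroup ${\rm O}_m^{+}(q^2)$ (note: plus type, not minus as you wrote), then in each residual case observe that $x_1$ is a derangement and the bound $a_1 \geqs \beta_1$ from Table \ref{tab:class4} suffices. No unipotent backup is required, and the imprimitive families you worry about are already excluded by the cited BGK propositions, so the bookkeeping is lighter than you anticipate.
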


\begin{proof}

We may assume that $S$ is not one of the groups in the statement of Proposition \ref{p:ort0}. We
define the regular semisimple elements $x_i \in S$ as in Table \ref{tab:class4} (we use the same
notation as in \cite{BGK,GK}), where $\b_i$ is a lower bound on $a_i$, which is the number of
distinct $A$-classes of elements in $S$ with the same shape and order as $x_i$. Let $H$ be a
maximal subgroup of $S$.

First assume $m$ is odd. Consider elements of type $x_2$. By \cite[Proposition 5.13]{BGK}, the only
maximal subgroups of $S$ containing $x_2$ are of type $N_{m-1}^{-}$. The lower bound $a_2 \geqs
\b_2$ implies that $a_2 \geqs 3$ (and that $a_2$ tends to infinity as $|S|$ tends to infinity).
Now, if $H=N_{m-1}^{-}$ then elements of type $x_1$ are derangements, and we observe that the lower
bound $a_1 \geqs \b_1$ is sufficient.

Now assume $m$ is even. By considering elements of type $x_3$, and by applying \cite[Proposition
5.14]{BGK}, we reduce to the case where $H$ is of type $N_{m-2}^{-}$ or ${\rm O}_{m}^{+}(q^2)$.
Here elements of type $x_1$ are derangements, and the result follows via the bound $a_1 \geqs
\b_1$.
\end{proof}

\begin{proposition}\label{p:plus8}
The conclusion to Theorem \ref{t:main2} holds if $S={\rm P\O}_{8}^{+}(q)$.
\end{proposition}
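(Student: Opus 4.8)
We may assume $q \geqs 5$, since the groups with $q \leqs 4$ are covered by Proposition \ref{p:ort0}. Write $A = {\rm Aut}(S)$ and let $H$ be a maximal subgroup of $S$; as usual it suffices to produce at least three $A$-classes of elements of $S$ meeting no $S$-conjugate of $H$, and to verify that the number of such classes tends to infinity with $q$. The feature that distinguishes this case from Proposition \ref{p:plus} is the exceptional triality symmetry of $D_4$: the graph automorphisms of $S$ form a copy of $S_3$, so $|{\rm Out}(S)|$ acquires an extra factor of $3$ (for $q$ even $S = \O_8^{+}(q)$ and there are no diagonal automorphisms, while for $q$ odd both the graph $S_3$ and the diagonal group are present). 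This enlargement of $|{\rm Out}(S)|$ weakens the lower bounds coming from Lemma \ref{l:class}, and, more seriously, triality permutes the stabilisers of subspaces and forces several additional maximal subgroups into play — in particular there are three $S$-classes of subgroups $\O_7(q)$ forming a single triality orbit (only one of which, $N_1$, is a subspace stabiliser), together with $G_2(q)$ for suitable $q$. Note also that for $m=4$ the two element types $x_1$ and $x_3$ of Table \ref{tab:class4} coincide, both having shape $2^{-} \perp 6^{-}$, so the two-step reduction of Proposition \ref{p:plus} is no longer available.

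The plan is to begin with a regular semisimple element $x_1 \in S$ of shape $2^{-} \perp 6^{-}$, of order ${\rm lcm}(q+1,q^3+1)$ divided by the relevant scalar, acting irreducibly on each summand (so that the $6^{-}$ constituent carries a primitive prime divisor of $q^6-1$). Using the incidence analysis of \cite{BGK}, refined by the primitive prime divisor results of \cite{GPPS} and the tables of \cite{GK}, I would determine the complete list of maximal subgroups of $S$ containing $x_1$, \emph{including} the triality-twisted ones. An estimate via Lemmas \ref{l:euler} and \ref{l:class} shows that the number $a_1$ of $A$-classes of such elements satisfies $a_1 \geqs 3$ for all but the smallest $q$, and grows with $q$; this reduces the problem to the finite list of surviving maximal subgroups, namely the triality orbit of $N_2^{-}$, the three classes of $\O_7(q)$, and $G_2(q)$. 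For each surviving $H$ I would then switch to an alternative family of derangements: regular semisimple elements of a distinct shape, for instance $4^{-} \perp 4^{-}$ of order $q^2+1$ (which fix no nondegenerate $2$-space), together with unipotent elements of prescribed Jordan type, and I would count $A$-classes using the eigenvalue and Jordan-form bookkeeping of \cite{BGK,GK} to obtain at least three classes in each case, growing with $q$.

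The main obstacle is precisely the triality bookkeeping. Because triality does not respect the eigenvalue or cycle-type structure on the natural module, the usual shape-based arguments cannot by themselves rule out membership in the triality-twisted conjugates of $\O_7(q)$; indeed an element avoiding the point stabiliser $N_1$ may well lie in one of its triality images. Resolving this requires the $D_4$-specific incidence data in \cite{BGK}, which records exactly which classes of elements lie in each triality-twisted maximal subgroup, and it is here that the care of the argument is concentrated. Finally, for the handful of small values $q \in \{5,7,9\}$ where the estimates from Lemma \ref{l:class} are too weak to guarantee three classes, I would verify the conclusion directly in {\sc Magma} \cite{magma}, computing $\kappa(G,H)$ for each maximal subgroup $H$ with $G = SH$.
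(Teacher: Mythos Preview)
Your first move matches the paper's: take $x_1 \in S$ regular semisimple of shape $2^{-}\perp 6^{-}$ and reduce to its maximal overgroups. However, your tentative list of those overgroups is off. By \cite[p.767]{GK} the maximal subgroups of $S$ containing such an element are precisely those of type $N_2^{-}$ or ${\rm GU}_4(q)$; these three $S$-classes form a single triality orbit, so your phrase ``the triality orbit of $N_2^{-}$'' already captures everything. Neither $\O_7(q)$ (in any of its three $S$-classes) nor $G_2(q)$ contains $x_1$. Consequently the triality bookkeeping you flag as ``the main obstacle'' simply does not arise: \cite{GK} has already done it, and since we count $A$-classes (which are triality-stable by construction) the only residual triality issue is whether distinct ${\rm PGO}_8^{+}(q)$-classes of elements of type $x_1$ get fused by a graph automorphism. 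The paper checks this directly only for $q=5$; for $q\geqs 7$ the crude bound from Lemma~\ref{l:class} already gives $a_1\geqs 3$.

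For the second step the paper's choice is cleaner than yours. Rather than $4^{-}\perp 4^{-}$ elements of order $q^2+1$, it takes $x_2$ of shape $2^{+}\perp 6^{+}$ lifting to an element of $\O_8^{+}(q)$ of order $q^3-1$. A primitive prime divisor of $q^3-1$ divides neither $|N_2^{-}|$ nor $|{\rm GU}_4(q)|$, so $x_2$ is a derangement for both remaining subgroup types simultaneously, and the estimate
\[
a_2 \geqs \left\lceil \frac{\phi(q-1)\phi(q^3-1)}{12\cdot 6d\log_p q} \right\rceil
\]
gives $a_2\geqs 3$ for all $q>7$, with $q\in\{5,7\}$ handled by a direct check. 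Your $4^{-}\perp 4^{-}$ family would have needed a separate argument for each overgroup, and since $q^2+1$ divides $|\O_7(q)|$, had $\O_7(q)$ genuinely been on the list that choice would not obviously succeed there. In short: right first element, wrong overgroup list, and once the list is corrected to $N_2^{-}$ and ${\rm GU}_4(q)$ the proof is considerably shorter than you anticipate.
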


\begin{proof}
In view of Proposition \ref{p:ort0}, we may assume that $q \geqs 5$. Let $x_1 \in S$ be a regular
semisimple element that fixes an orthogonal decomposition $2^{-}\perp 6^{-}$. Let $a_1$ denote the
number of distinct $A$-classes of such elements. Then
$$a_1 \geqs \left\lceil \frac{\phi(q+1)\phi(q^3+1)}{12\cdot 6d\log_pq}\right\rceil$$
and we deduce that $a_1 \geqs 3$ if $q \geqs 7$ (and that $a_1$ tends to infinity as $|S|$ tends to
infinity). If $q=5$ then a direct calculation shows that there are at least three $A$-classes of
such elements (in particular, none of the relevant ${\rm PGO}_{8}^{+}(5)$-classes are fused by a
triality graph automorphism of $S$). By \cite[p.767]{GK}, the only maximal subgroups of $S$
containing such elements are of type $N_{2}^{-}$ or ${\rm GU}_{4}(q)$, so we may assume that $H$ is
one of these subgroups. Now let $x_2 \in S$ be a regular semisimple element that fixes an
orthogonal decomposition $2^{+}\perp 6^{+}$ and lifts to an element in $\O_{8}^{+}(q)$ of order
$q^3-1$. Note that $x_2$ is a derangement, and let $a_2$ be the number of $A$-classes of such
elements. Then
$$a_2 \geqs \left\lceil \frac{\phi(q-1)\phi(q^3-1)}{12\cdot 6d\log_pq}\right\rceil$$
and the result follows if $q>7$. Finally, if $q=5$ or $7$ then one can check directly that there
are at least three $A$-classes of such elements.
\end{proof}

\begin{proposition}\label{p:minus}
The conclusion to Theorem \ref{t:main2} holds if $S={\rm P\O}_{2m}^{-}(q)$ and $m \geqs 4$.
\end{proposition}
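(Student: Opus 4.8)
The plan is to mirror the structure of the preceding classical-group propositions, handling $S={\rm P\O}_{2m}^{-}(q)$ with $m\geqs 4$ by exhibiting enough $A$-classes of regular semisimple derangements. The general strategy is a two-stage reduction: first use a distinguished class of elements to cut the possible maximal subgroups $H$ down to a short, explicit list, then switch to a second family of elements that is provably disjoint from each of those $H$. Since we are in the minus-type case with $n=2m$ even, the natural choice for the first family is the irreducible torus: let $x_1\in S$ lift to an element of ${\rm \O}_{2m}^{-}(q)$ of order $q^m+1$ acting irreducibly on $V$. Such an element is self-centralising, its normaliser has small index, and—crucially—its irreducibility forces it into a very restricted set of maximal overgroups.

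First I would fix the two element types. For $x_1$ (the irreducible torus of order roughly $q^m+1$), I would cite the relevant result from \cite{BGK,GK} to pin down $\mathcal{M}(x_1)$: by \cite[Proposition 5.13]{BGK} (or the tables in \cite{GK}) the only maximal subgroups of ${\rm P\O}_{2m}^{-}(q)$ containing an irreducible element of this order are field-extension subgroups of type ${\rm O}_{2m/k}^{-}(q^k)$ for a prime $k\mid m$, together with possibly ${\rm GU}_{m}(q).2$. Applying Lemmas \ref{l:euler} and \ref{l:class} to the cyclic torus $\la x_1\ra$ gives a bound of the shape
$$
a_1 \geqs \left\lceil \frac{\phi(q^m+1)}{2m\cdot 2d\log_p q}\right\rceil \geqs \frac{\sqrt{(q^m+1)/2}}{4md\log_p q},
$$
which exceeds $3$ for all $m\geqs 4$ once the small cases of Proposition \ref{p:ort0} are excluded, and clearly tends to infinity with $|S|$. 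This reduces us to the case where $H$ is one of the field-extension (or unitary) subgroups just listed. For the second stage I would choose $x_2$ to fix an orthogonal decomposition such as $2^{-}\perp(2m-2)^{-}$ (type $x_1$ in Table \ref{tab:class4}), whose order involves ${\rm lcm}(q+1,q^{m-1}+1)$: because $|x_2|$ does not divide $|{\rm O}_{2m/k}^{-}(q^k)|$ for any prime $k\mid m$, and because the eigenvalue configuration is incompatible with ${\rm GU}_m(q)$, these elements are derangements against every surviving $H$, and the corresponding bound $a_2\geqs\b_2$ from Table \ref{tab:class4} yields at least three $A$-classes growing without bound.

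The main obstacle I anticipate is twofold. First, parity: whether $m$ is odd or even changes which nondegenerate subspaces can be of minus type (a nondegenerate space of minus type has even dimension), so the precise decompositions defining $x_2$ and $x_3$ in Table \ref{tab:class4} must be selected according to $m\bmod 2$, exactly as in the symplectic and plus-type orthogonal arguments (Propositions \ref{p:sp1} and \ref{p:plus}); I would split into the cases $m$ odd (using $x_2$) and $m$ even (using $x_3$, reducing to $N_{m-2}^{-}$ or a field-extension subgroup, then finishing with $x_1$). Second, and more delicately, the outer automorphism group of ${\rm P\O}_{2m}^{-}(q)$ includes diagonal, field, and graph automorphisms, and the factor $|{\rm Out}(S)|$ in Lemma \ref{l:class} must be computed correctly from \cite[Tables 5.1.A, 5.1.B]{KL} so that the $S$-classes are not over-fused into too few $A$-classes; I must also verify that the $N_i^{\pm}$ and field-extension subgroups genuinely lie in $\mathcal{M}(x_1)$ as claimed and that no additional exotic maximal subgroup from the ${\mathscr S}$-collection contains $x_1$ (handled by the primitive-prime-divisor analysis of \cite{GPPS}). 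For the finitely many small $(m,q)$ where the asymptotic bounds on $a_1$ or $a_2$ fail, I would simply defer to the {\sc Magma} computations already subsumed in Proposition \ref{p:ort0}.
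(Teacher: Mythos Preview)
Your first-stage reduction is exactly what the paper does: take $x_1\in S$ lifting to an irreducible element of order $(q^m+1)/d$, bound the number of $A$-classes via Lemmas \ref{l:euler} and \ref{l:class}, and reduce to the case where $H$ is a field-extension subgroup of type ${\rm O}_{2m/k}^{-}(q^k)$ (for a prime $k\mid m$) or ${\rm GU}_m(q)$ with $m$ odd. (The paper cites \cite[Main Theorem]{Ber} for the overgroup list here, not \cite[Proposition 5.13]{BGK}, which is a plus-type statement; but that is a minor slip.)

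The genuine gap is in your second stage. An orthogonal decomposition $2^{-}\perp(2m-2)^{-}$ has type $(-)\cdot(-)=+$, so the element you propose as $x_2$ lives in ${\rm P\O}_{2m}^{+}(q)$, not in $S={\rm P\O}_{2m}^{-}(q)$. The same applies to the elements $x_2,x_3$ of Table \ref{tab:class4}: that table is set up specifically for the plus-type group (it feeds Proposition \ref{p:plus}), and none of the decompositions $(m\pm1)^{-}\perp(m\mp1)^{-}$ or $(m\pm2)^{-}\perp(m\mp2)^{-}$ embed in a minus-type form. Consequently your proposed parity split into $m$ odd/even, modelled on Proposition \ref{p:plus}, cannot be carried out here.

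The fix, and what the paper actually does, is much simpler: for the second stage take $x_2$ fixing a decomposition $2^{+}\perp(2m-2)^{-}$, centralising the $2^{+}$-space and acting irreducibly on the $(2m-2)^{-}$-space. This has the correct global type $(+)\cdot(-)=-$, so $x_2\in S$. A primitive prime divisor of $q^{m-1}+1$ divides $|x_2|$ but not $|{\rm O}_{2m/k}^{-}(q^k)|$ or $|{\rm GU}_m(q)|$ (the latter for $m$ odd), so $x_2$ is a derangement for each surviving $H$, and the bound
\[
a_2\geqs\left\lceil\frac{\phi(q^{m-1}+1)}{(2m-2)\cdot 2d\log_p q}\right\rceil
\]
finishes the argument uniformly in $m$, with no parity case division required.
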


\begin{proof}
We may assume that $S$ is not one of the groups in the statement of Proposition \ref{p:ort0}. Let
$x_1 \in S$ be an irreducible element that lifts to an element of order $(q^m+1)/d$ in
$\O_{2m}^{-}(q)$. Let $a_1$ be the number of $A$-classes of such elements. Then
\begin{equation}\label{e:a1bd}
a_1 \geqs \left\lceil \frac{\phi(q^{m}+1)}{2m\cdot 2d\log_pq} \right\rceil
\end{equation}
and thus $a_1 \geqs 3$ (and $a_1$ tends to infinity as $|S|$ tends to infinity). By \cite[Main
Theorem]{Ber}, if $H$ is a maximal subgroup of $S$ that contains such an element then $H$ is a
field extension subgroup of type ${\rm O}_{2m/k}^{-}(q^k)$ or ${\rm GU}_{m}(q)$ (with $m$ odd),
where $k$ is a prime divisor of $m$. In both of these cases, any element $x_2 \in S$ that fixes a
decomposition $2^{+} \perp (2m-2)^{-}$ of the natural $S$-module, centralising the $2$-space and
acting irreducibly on the $(2m-2)$-space, is a derangement. Now, if $a_2$ denotes the number of
$A$-classes of such elements then
\begin{equation}\label{e:a2bd}
a_2 \geqs \left\lceil \frac{\phi(q^{m-1}+1)}{(2m-2)\cdot 2d\log_pq} \right\rceil
\end{equation}
and the result follows.
\end{proof}

\begin{proposition}\label{p:odd}
The conclusion to Theorem \ref{t:main2} holds if $S=\O_{2m+1}(q)$ and $m \geqs 3$.
\end{proposition}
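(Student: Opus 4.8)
The plan is to follow the same template used throughout this section for the classical groups, since the odd-dimensional orthogonal groups $S=\O_{2m+1}(q)$ (with $q$ necessarily odd) are structurally very close to the symplectic groups $\PSp_{2m}(q)$ already handled in Proposition \ref{p:sp1}. First I would dispose of the small cases: by the conditions in Table \ref{tab:cl} we have $m \geqs 3$, and a handful of groups over the field $\F_3$ (together with $\O_7(q)$ for small $q$) should be treated separately, either by a direct {\sc Magma} computation of $\kappa(G,H)$ over all maximal subgroups, or by citing the relevant entries of Proposition \ref{p:ort0}-style small-case analysis. So the substance of the proof concerns $m \geqs 3$ with $q$ reasonably large.

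The core of the argument proceeds by exhibiting regular semisimple elements that lie in very few maximal subgroups. The natural choice is an element $x_1 \in S$ that fixes an orthogonal decomposition of the natural $(2m+1)$-dimensional module as $1 \perp 2m^{-}$ (that is, a nondegenerate $1$-space together with a nondegenerate $2m$-space of minus type), acting irreducibly on the $2m$-space so that a lift has order essentially $q^m+1$. By the primitive prime divisor analysis of \cite{GPPS} and the explicit maximal subgroup information in \cite[Proposition 5.8 or 5.10]{BGK} and \cite[Table II]{GK}, such an element should be contained only in the stabiliser $N_1$ of the nondegenerate $1$-space, whose derived structure is $\O_{2m}^{-}(q)$, and possibly a field extension subgroup of type $\O_{2m/k}(q^k)$ or ${\rm O}_{2m+1}(q^{1/k})$ for $k$ a prime divisor. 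Using Lemmas \ref{l:euler} and \ref{l:class} I would bound the number $a_1$ of distinct $A$-classes of such elements below by roughly $\phi(q^m+1)/(2m \cdot 2d\log_p q) \geqs \sqrt{(q^m+1)/2}/(4m\log_p q)$, which exceeds $3$ and tends to infinity as $|S| \to \infty$. This reduces the problem to the case where $H$ is one of the few identified maximal subgroups.

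For the reduced cases I would switch to a second family of elements designed to be derangements with respect to the surviving $H$. When $H=N_1$ has type $\O_{2m}^{-}(q)$, an element $x_2$ fixing a decomposition $2^{+} \perp (2m-1)$ (or an element acting irreducibly on a nondegenerate $2m$-space of \emph{plus} type, of order a divisor of $q^m-1$) will lie outside $H$ because its eigenvalue multiset is incompatible with the minus-type form preserved by $N_1$; for the field extension subgroups, the relevant element orders simply fail to divide $|H|$. In each instance I would again invoke Lemma \ref{l:class} to produce a bound of the form $a_2 \geqs \lceil \phi(q^{m-1}+1)/((2m-2) \cdot 2d\log_p q) \rceil$ guaranteeing at least three $A$-classes, with the count growing without bound. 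This mirrors exactly the two-element strategy of Propositions \ref{p:sp1} and \ref{p:minus}.

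The main obstacle I anticipate is the presence of the exceptional isogeny when $q$ is even---but since $q$ is odd for $\O_{2m+1}(q)$ this is not an issue here; rather, the real subtlety is the isomorphism $\O_{2m+1}(q) \cong \PSp_{2m}(q)$ when $q$ is even, which is excluded, and more importantly the need to verify that no \emph{graph or field automorphisms} in $A = \Aut(S)$ fuse the chosen $S$-classes in a way that collapses the count $a_i$ below $3$. Because $\Out(S)$ for $\O_{2m+1}(q)$ consists only of diagonal and field automorphisms (there is no triality or graph automorphism in odd dimension), the factor $|\Out(S)| = 2d\log_p q$ appearing in Lemma \ref{l:class} already accounts correctly for all possible fusion, so this should be routine. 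The only genuinely delicate point is ensuring the maximal-subgroup lists from \cite{BGK,GK,GPPS} are applied with the correct types in odd dimension, and confirming the small-field base cases; the asymptotic statement then falls out automatically from the $\sqrt{q^m}$-type lower bounds.
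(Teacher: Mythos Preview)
Your overall plan matches the paper's: pick a first family $x_1$ fixing $1 \perp (2m)^{-}$ to reduce to a single maximal subgroup, then produce a second family of derangements for that residual case. The choice of $x_1$ is exactly what the paper uses, and (via \cite[Proposition 5.20]{BGK}) the only maximal overgroup of $x_1$ is the stabiliser $N_{2m}^{-}$ of the minus-type hyperplane; your worry about field extension subgroups is unnecessary (and the type $\O_{2m/k}(q^k)$ you wrote does not even embed in $\O_{2m+1}(q)$).

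The gap is in your second element. Neither option you propose works as stated. No element of $\O_{2m}^{+}(q)$ acts irreducibly on the natural $2m$-space, so ``irreducible on a plus-type $2m$-space'' is impossible. And a decomposition $2^{+} \perp (2m-1)$ is not enough: whatever you do on the odd-dimensional piece, the element will have an eigenvalue $1$, and unless you force the full $1$-eigenspace to be singular, the element fixes a nondegenerate $1$-space and hence lies in $N_1 = N_{2m}^{-}$. The paper solves this by taking $x_2$ of order $q(q^{m-1}+1)/2$ fixing a decomposition $3 \perp (2m-2)^{-}$, acting \emph{indecomposably} (as a regular unipotent $J_3$) on the $3$-space and irreducibly on the $(2m-2)^{-}$-space. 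The $J_3$ block ensures the unique $1$-eigenline is isotropic, so $x_2$ fixes no nondegenerate $1$-space and is therefore a derangement on $G/N_{2m}^{-}$.

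One further point: the bound \eqref{e:a2bd} applied to this $x_2$ only yields $a_2 \geqs 2$ in general, not $a_2 \geqs 3$ as you assert. The paper makes up the shortfall by observing that every regular unipotent element (a single $J_{2m+1}$ block, whose fixed line is again singular) is also a derangement, giving the required third class. The small cases handled directly are $\O_7(3)$, $\O_7(5)$ and $\O_9(3)$.
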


\begin{proof}
If $S=\O_{7}(3),\O_7(5)$ or $\O_9(3)$ then the result can be checked directly, using {\sc Magma}
\cite{magma}, so we will assume that we are not in one of these cases.

Let $x_1 \in S$ be a regular semisimple element of order $(q^m+1)/2$ that fixes a decomposition
$(2m)^{-}\perp 1$ of the natural $S$-module, and let $a_1$ be the number of distinct $A$-classes of
such elements. Then \eqref{e:a1bd} holds (setting $d=1$), so $a_1 \geqs 3$ (and $a_1$ tends to
infinity as $|S|$ tends to infinity). By \cite[Proposition 5.20]{BGK}, the only maximal subgroup of
$S$ containing such an element is the stabiliser of a nondegenerate $2m$-space of minus-type,
denoted by $H=N_{2m}^{-}$. In this situation, let $x_2 \in S$ be an element of order
$q(q^{m-1}+1)/2$ that fixes a decomposition $3 \perp (2m-2)^{-}$, where $x_2$ acts indecomposably
on the $3$-space and irreducibly on the $(2m-2)^{-}$-space. If $a_2$ denotes the number of
$A$-classes of such elements then \eqref{e:a2bd} holds (with $d=1$), so $a_2 \geqs 2$ and the
result follows since every regular unipotent element is also a derangement.
\end{proof}

\vs

This completes the proof of Theorem \ref{t:main2}.

\section{Two classes of derangements}\label{s:2classes}
In this section we investigate the finite primitive permutation groups $G$ with the property
$\kappa(G)=2$, with the aim of proving Theorem \ref{t:main3}. We begin with a preliminary lemma. As before, if $X$ is a group then $X^*=X
\setminus \{1\}$ is the set of nontrivial elements in $X$.

\begin{lemma}\label{l:frob}
Let $G \leqs {\rm Sym}(\Omega)$ be a finite transitive permutation group with point stabiliser $H
\neq 1$. Let $N$ be a regular normal subgroup of $G$. Then $G$ is a Frobenius group with kernel $N$
if and only if $\Delta(G) \subseteq N$.
\end{lemma}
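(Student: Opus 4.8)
The plan is to treat the two implications separately, using the description of Frobenius groups recalled in Section~\ref{s:red}. I would first record the basic structure coming from the hypothesis that $N$ is a regular normal subgroup: since $N$ acts regularly we have $G=HN$, $H \cap N = 1$ and $|N|=n=|\Omega|$, and every nontrivial element of $N$ is fixed-point-free, so $N^* \subseteq \Delta(G)$. For the forward implication, suppose $G$ is a Frobenius group with kernel $N$. As noted in Section~\ref{s:red}, the set $\{1\}\cup\Delta(G)$ is precisely the Frobenius kernel of $G$, so it coincides with $N$; hence $\Delta(G)=N^* \subseteq N$, as required.

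For the converse, I would assume $\Delta(G)\subseteq N$; combined with $N^*\subseteq\Delta(G)$ this gives $\Delta(G)=N^*$, so the fixed-point-free elements are exactly the nontrivial elements of $N$. The crux is then a counting argument based on the orbit-counting lemma: since $G$ is transitive, $\sum_{g\in G}\mathrm{fix}(g)=|G|$, where $\mathrm{fix}(g)$ is the number of points fixed by $g$. The identity contributes $n=|N|$ to this sum, the $n-1$ elements of $N^*=\Delta(G)$ contribute $0$, and each of the remaining $|G|-n$ elements (those in $G\setminus N$) contributes at least $1$. Since the total is $|G|=n+(|G|-n)$ and the first two families already account for $n$, the elements of $G\setminus N$ must contribute exactly $|G|-n$ in total; as there are exactly $|G|-n$ of them, each fixes exactly one point. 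Thus no nontrivial element fixes more than one point, and since $H\neq1$ the group $G$ is not regular, so $G$ is a Frobenius group whose kernel is $\{1\}\cup\Delta(G)=N$.

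I do not expect a serious obstacle, but there are two points to handle with care. First, in the converse I must check that the three families—the identity, $\Delta(G)=N^*$, and $G\setminus N$—are genuinely disjoint and exhaust $G$; this is where the equality $\Delta(G)=N^*$ (rather than merely $\Delta(G)\subseteq N$) is essential, since an element of $G\setminus N$ is forced to fix at least one point precisely because it is not a derangement. Second, to conclude ``Frobenius with kernel $N$'' rather than just ``Frobenius'', I should invoke the identification of the Frobenius kernel with $\{1\}\cup\Delta(G)$, matching $N^*=\Delta(G)$. As an alternative to the counting argument, one could identify $\Omega$ with $N$ (with $N$ acting by translation and $H$ by conjugation) and verify directly that $nh$ with $1\neq h\in H$ fails to be a derangement if and only if the map $m\mapsto m(hmh^{-1})^{-1}$ is surjective, equivalently $\Centralizer_N(h)=1$; this yields the same equivalence but requires an explicit computation that the orbit-counting approach sidesteps.
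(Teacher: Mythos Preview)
Your argument is correct. The forward implication is the same as the paper's. For the converse, however, you take a genuinely different route: the paper counts the union of the conjugates of $H$, obtaining
\[
|G| = |N| + \Bigl|\bigcup_{i=1}^{k} H_i^{*}\Bigr| \leqs |N| + k(|H|-1) \leqs |N|\,|H| = |G|,
\]
and then reads off from equality that $k=|G:H|$ and $H_i^{*}\cap H_j^{*}=\emptyset$ for $i\neq j$, i.e.\ $H\cap H^{g}=1$ for all $g\in G\setminus H$. Your approach instead applies the orbit-counting lemma directly: since $\Delta(G)=N^{*}$, every element of $G\setminus N$ has at least one fixed point, and the equation $\sum_{g}\mathrm{fix}(g)=|G|$ forces each such element to fix exactly one point. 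This is arguably cleaner and avoids bounding the number of conjugates of $H$; on the other hand, the paper's calculation yields the pairwise disjointness of the $H_i^{*}$ (and hence $\Normalizer_G(H)=H$) explicitly, which matches the usual definition of a Frobenius group via $H\cap H^{g}=1$. In both cases the identification of the kernel with $\{1\}\cup\Delta(G)=N$ is immediate.
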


\begin{proof}
Since $N$ is regular, we have $G=HN$ and $H \cap N = 1$. By definition, if $G$ is a Frobenius group
with kernel $N$ then $\Delta(G) = N^*$.

Now assume $\Delta(G) \subseteq N$. First observe that if $x \in N^*$ then $x^G\cap H\subseteq
N^*\cap H=\emptyset$, so $x \in \Delta(G)$ and thus $N^*\subseteq \Delta(G)$. Therefore $\Delta(G)
= N^*$. Let $\{H_1, \ldots, H_k\}$ be the set of conjugates of $H$ in $G$. Then
$k=|G:\Normalizer_G(H)|\leqs |G:H|=|N|$ and
\[|\bigcup_{i=1}^k H_i^*|\leqs \sum_{i=1}^k|H_i^*|=\sum_{i=1}^k(|H|-1)=k(|H|-1).\]
Now
\[G=\{1\}\cup \Delta(G) \cup \left(\bigcup_{g\in G} (H^*)^g\right)=N\cup \left(\bigcup_{i=1}^k H_i^*\right)\]
and thus
$$|G|= |N|+|\bigcup_{i=1}^k H_i^*| \leqs |N|+k(|H|-1)
 \leqs  |N|+|N|(|H|-1) = |N|\cdot |H|=|G|.$$
Since $|H| \neq 1$, it follows that $k=|N|=|G:H|$ and $|\bigcup_{i=1}^k
H_i^*|=\sum_{i=1}^k|H_i^*|.$ The latter equality forces $H_i\cap H_j=1$ for every $1\leqs i\neq
j\leqs k$. Equivalently, $H\cap H^g=1$ for all $g\in G\setminus H$ and thus $G$ is a Frobenius
group with kernel $N$.
\end{proof}

Recall that if $J$ is a proper subgroup of $G$, then we set
$$\Delta_J(G)=G\setminus \bigcup_{g\in G} J^g.$$
We record the following easy result.

\begin{lemma}\label{l:normal}
Let $H$ be a maximal subgroup of a finite group $G$, $M$ a normal subgroup of $G$ such that $G=HM$, and let $K$ be a proper subgroup of $M$ containing $H\cap M$.
Then $\Delta_K(M)\subseteq \Delta_H(G)$.
\end{lemma}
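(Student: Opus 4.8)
The plan is to argue by contradiction, fixing an arbitrary element $x \in \Delta_K(M)$ and showing it cannot be $G$-conjugate into $H$. Unwinding the definitions, $x \in \Delta_K(M)$ means precisely that $x \in M$ and $x^M \cap K = \emptyset$, whereas the desired conclusion $x \in \Delta_H(G)$ means $x^G \cap H = \emptyset$. So I would suppose instead that $x^g \in H$ for some $g \in G$ and aim for a contradiction with $x^M \cap K = \emptyset$.

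The first step exploits the normality of $M$: since $x \in M$ and $M$ is normal in $G$, the conjugate $x^g$ again lies in $M$, so in fact $x^g \in H \cap M$, which by hypothesis sits inside $K$. This is what channels the entire argument into the subgroup $H \cap M$. The second step is to use the factorisation $G = HM$ (equivalently $G = MH$, as $M \trianglelefteq G$) to write $g = mh$ with $m \in M$ and $h \in H$, so that $x^g = (x^m)^h$ and $x^m \in M$; the purpose is to separate the $G$-conjugacy into an $M$-part, which is all that $\Delta_K(M)$ sees, and a residual conjugation by $h \in H$.

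The crux --- and really the only point with any content --- is the observation that $H \cap M$ is normal in $H$, which follows from $(H \cap M)^h = H^h \cap M^h = H \cap M$ for $h \in H$, using $H^h = H$ and $M^h = M$ (the latter because $M \trianglelefteq G$). Granted this, the relation $(x^m)^h = x^g \in H \cap M$ lets me strip off the conjugation by $h$ and deduce $x^m \in H \cap M \leq K$. Since $m \in M$, this places $x^m$ in $x^M \cap K$, contradicting $x \in \Delta_K(M)$, and hence $x \in \Delta_H(G)$. I expect the normality of $H \cap M$ in $H$ to be the main (indeed only) obstacle; once it is in hand the rest is a short conjugacy chase requiring no case analysis, and I note in passing that the maximality of $H$ and the properness of $K$ play no role in this particular argument.
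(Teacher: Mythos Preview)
Your argument is correct and is essentially the paper's own proof: both proceed by contradiction, conjugate into $H\cap M$ using normality of $M$, factor $g$ via $G=HM$, and then peel off the $H$-part of the conjugation using that $H\cap M$ is $H$-invariant. The only cosmetic difference is that the paper writes $g=hm$ and works with $x\in (H\cap M)^g=(H\cap M)^m\leqslant K^m$, whereas you write $g=mh$ and instead obtain $x^m\in H\cap M\leqslant K$; your observation that the maximality of $H$ and properness of $K$ are irrelevant here is also accurate.
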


\begin{proof}
Let $x\in \Delta_K(M)$ and assume that $x\not\in\Delta_H(G)$. Then $x\in H^g$ for some $g\in G$. It follows that $x^{g^{-1}}\in H$ and since $x\in M\normeq G,$ we also have $x^{g^{-1}}\in M,$ so $x^{g^{-1}}\in H\cap M$ and thus $x\in (H\cap M)^g=H^g\cap M$. Since $g\in G=HM$, we can write $g=hm$ with $h\in H$ and $m\in M$. Then $x\in H^g\cap M=H^m\cap M=(H\cap M)^m\leqs K^m$ with
$m\in M$, contradicting our assumption that $x\in \Delta_K(M)$. The result follows.
\end{proof}



\begin{proposition}\label{p:red2}
Let $G \leqs {\rm Sym}(\Omega)$ be a finite primitive permutation group of degree $n$ with point stabiliser $H$. Assume $G$ is not almost simple. If $\kappa(G)=2$, then one of the following holds:
\begin{itemize}\addtolength{\itemsep}{0.2\baselineskip}
\item[{\rm (i)}] $(G,n) = (\Z_3,3)$;
\item[{\rm (ii)}] $G=HN$ is a Frobenius affine group, where the kernel $N$ is an elementary abelian $p$-group of order $n=p^k$ for some odd prime $p$, and $|H|=(n-1)/2$;
\item[{\rm (iii)}] $G$ is a non-Frobenius $2$-transitive affine group.
\end{itemize}
Moreover, any primitive group $G$ as in {\rm (i)} or {\rm (ii)} has the property $\kappa(G)=2$.
\end{proposition}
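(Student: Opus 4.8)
The plan is to follow the proof of Theorem~\ref{t:red} closely, analysing a minimal normal subgroup $N$ of $G$ according to whether or not it is regular. In either case $N$ is transitive and $G=HN$, and I will repeatedly use that a derangement lying in a normal subgroup can never be $G$-conjugate to one lying outside it. The regular, non-abelian case is immediate: if $N$ is regular and non-abelian then $N\cong S^k$ for a non-abelian simple group $S$, so $|N|$ has at least three prime divisors and the inclusion $N^*\subseteq\Delta(G)$ already produces derangements of three distinct prime orders, whence $\kappa(G)\geq 3$, a contradiction. Thus in the regular case $N\cong\Z_p^k$ and $G$ is affine.

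For a regular abelian $N\cong\Z_p^k$ and $x\in N^*$ one has $x^G=x^H$ (since $N$ centralises $x$), so the number of derangement classes inside $N^*$ equals the number of nontrivial $H$-orbits on $\Omega\setminus\{\alpha\}$. If $H=1$ then $G$ is regular of prime order with $\kappa(G)=p-1$, forcing $p=3$ and case~(i). If $H\neq 1$ and $G$ is Frobenius, then $\Centralizer_G(x)=N$ for all $x\in N^*$, so each $H$-orbit on $N^*$ is a full $G$-class and $\kappa(G)$ equals the number $d$ of such orbits, with $|H|=(n-1)/d$; hence $\kappa(G)=2$ forces $d=2$, $|H|=(n-1)/2$, and (as $(n-1)/2\in\Z$) $p$ odd, giving case~(ii). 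Finally, if $H\neq 1$ and $G$ is not Frobenius, then by Lemma~\ref{l:frob} some derangement lies outside $N$, contributing at least one further class; since $\kappa(G)=2$ the classes inside $N^*$ number exactly one, so $G$ is $2$-transitive and we are in case~(iii).

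It remains to rule out the non-regular case, where, exactly as in Theorem~\ref{t:red}, $N\cong S^k$ with $S$ non-abelian simple and $k\geq 2$. In the diagonal-type situation $R=S$ one again builds a regular subgroup $N_0\leq N$ of order $|\Omega|$ divisible by $|S|$, so $N_0^*$ contains derangements of three distinct prime orders and $\kappa(G)\geq 3$; this is precisely the argument already used for $\kappa=1$. In the product-action case $R\neq S$ we have $G\leq L\wr S_k$ on $\Omega=\Gamma^k$ with $L$ almost simple primitive with socle $S$. Fix a derangement $u$ of $S$ on $\Gamma$ (one exists of prime-power order by Fein--Kantor--Schacher \cite{FKS}). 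For $1\leq j\leq k$, the element $y_j\in N$ having $j$ coordinates equal to $u$ and the rest trivial is a derangement, and conjugation in $L\wr S_k$ preserves the number of nontrivial coordinates; hence $y_1,\ldots,y_k$ are pairwise non-conjugate, and $k\geq 3$ yields $\kappa(G)\geq 3$ at once.

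The main obstacle is the product-action case with $k=2$, where the $y_j$ give only two classes. Here I would seek a third via a second derangement: if $S$ admits a derangement $v$ on $\Gamma$ that is \emph{not} $L$-conjugate to $u$, then $(v,1)$ is non-conjugate to $(u,u)$ by the coordinate count, and non-conjugate to $(u,1)$ since $(u,1)\sim_G(v,1)$ in $G\leq L\wr S_2$ forces $u\sim_L v$; this gives a third class. Such a $v$ exists unless all derangements of $S$ on $\Gamma$ fall into a single $L$-class, which by Theorem~\ref{t:main1} (already established for almost simple groups in Section~\ref{s:as}) confines $(L,\Gamma)$ to the two exceptional actions with $\kappa(L)=1$, namely $A_5$ on six points and ${\rm L}_2(8){:}3$ on $28$ points; these finitely many configurations I would settle directly in {\sc Magma}, completing the non-regular case. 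For the converse statement, case~(i) is the observation that the two nontrivial elements of $\Z_3$ form separate classes, while in case~(ii) the computation above shows that a Frobenius affine group with abelian kernel $N$ and $|H|=(n-1)/2$ has $\Delta(G)=N^*$ splitting into exactly two $G$-classes, so $\kappa(G)=2$.
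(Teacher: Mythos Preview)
Your argument is largely correct and runs parallel to the paper up to the point where you reduce the non-regular product-action case to $k=2$. The gap is in your handling of that case. You assert that if all derangements of $S$ on $\Gamma$ lie in a single $L$-class then $\kappa(L)=1$, so that Theorem~\ref{t:main1} confines $(L,\Gamma)$ to the two almost simple exceptions. That implication is false: the condition concerns only the derangements lying in $S$, whereas $\kappa(L)$ counts $L$-classes of derangements in all of $L$. A concrete counterexample is $(L,\Gamma)=(S_5,S_4)$ in its action of degree $5$: the derangements of $S=A_5$ are precisely the $5$-cycles, which form a single $S_5$-class, yet $\kappa(S_5)=2$ here (the second class being the odd $(3,2)$-cycles). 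Thus your reduction to two Magma checks is incomplete, and in fact the list of $(L,\Gamma)$ with ``$S$ has a unique $L$-class of derangements'' is strictly larger than the $\kappa(L)=1$ list.

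The paper avoids this difficulty altogether. After reaching the same conclusion that $k=2$ and that $S$ has a unique $L$-class of derangements, it does not invoke Theorem~\ref{t:main1}; instead it applies Lemma~\ref{l:normal} to a maximal subgroup $K$ of $N=S_1\times S_2$ containing $H\cap N$, and uses the elementary dichotomy for such $K$ (diagonal or standard). In the diagonal case every $(s,1)$ with $s\neq 1$ is a derangement on $N/K$, hence on $\Omega$, giving derangements of at least three distinct prime orders. In the standard case $K=K_1\times S_2$ one takes, via \cite{FKS}, a derangement $s\in S_1$ of prime-power order $p^e$ on $S_1/K_1$ and elements $a,b\in S_2$ of distinct prime orders different from $p$; then $(s,1),(s,a),(s,b)$ are derangements of pairwise distinct orders, forcing $\kappa(G)\geqs 3$. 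This self-contained argument works uniformly and requires no appeal to the almost simple classification or to computer checks.
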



\begin{proof}
Let $H=G_{\a}$ be a point stabiliser. First assume $G$ is one of the groups in (i) or (ii).
Clearly, $\kappa(G)=2$ in case (i). In (ii), $H$ acts semiregularly on $\Omega \setminus \{\a\}$
with exactly two orbits. In particular, $\Delta(G) = N^* =
x^G \cup y^G$ and thus $\kappa(G)=2$.

Now assume $\kappa(G)=2$. We proceed as in the proof of Theorem \ref{t:red}. Let $N$ be a minimal normal subgroup of $G$, so $G=HN$. If $H=1$ then $G$ is regular and clearly $(G,n) =(\Z_3,3)$ is the only possibility. For the remainder, let us assume $H \neq 1$.

Suppose that $H\cap N\neq 1$. By the proof of Theorem \ref{t:red}, we may assume that
$N\cong S^k$, where $S$ is a nonabelian simple group, $k\geqs 2$ and $G\leqs L\wr S_k$ acting with
its product action on $\Omega=\Gamma^k$, where $L\leqs \rm{Sym}(\Gamma)$ is a primitive almost
simple group with socle $S$. Let $u\in S$ be a derangement on $\Gamma$. Then $x=(u,1,\ldots,1)\in
N$ and $y=(u,u,1,\ldots,1)\in N$ are non-conjugate derangements on $\Omega$. If $k \geqs 3$ then
$z^G=(u,u,u,1,\ldots,1)^G$ would be another $G$-class of derangements, so $k=2$ since
$\kappa(G)=2$. If $S$ has two $L$-classes of derangements with representatives $u$ and $v$, then $(u,1),(u,u),(v,1) \in N$ are non-conjugate derangements, which is a contradiction. Therefore $S$ contains a unique $L$-class of derangements on $\Gamma$.

Write $N=S_1\times S_2$, where $S_i\cong
S$, $i=1,2$. Let $K$ be a maximal subgroup of $N$ such that $H\cap N\leqs K$. By
Lemma \ref{l:normal}, every derangement of $N$ on $N/K$ is also a derangement of $N$ on $\Omega$. It is well known that either $K$ is a diagonal subgroup of the form $\{(s,\phi(s)) \mid
s\in S_1\}$ for some isomorphism $\phi:S_1 \to S_2$, or $K$ is a standard maximal subgroup, that is
$K=S_1\times K_2$ or $K_1\times S_2$, where $K_i<S_i$ is maximal (see, for example, \cite[Lemma
1.3]{Thevenaz}). In the diagonal case, every element of the form $(s,1)$ with $1\neq s\in S_1$ is a
derangement of $N$ on $N/K$. Clearly, this case cannot happen. Now assume $K$ is a standard maximal
subgroup. Without loss of generality, we may assume that $K=K_1\times S_2$, where $K_1$ is maximal
in $S_1$. Let $s\in N$ be a derangement on $N/K$ of prime power order, say $p^e$ for some prime $p$
and integer $e\geqs 1$ (such an element exists by the main theorem of \cite{FKS}). Since
$|\pi(S)|\geqs 3$, choose $a,b\in S_2$ of distinct prime orders that are both different from $p$.
Then $(s,1),(s,a)$ and $(s,b)$ are derangements of $N$ on $N/K$ with distinct orders, so $N$ has at least three distinct $N$-classes of derangements on $N/K$ and thus $N$
has at least three distinct $G$-classes of derangements on $\Omega$. We have now eliminated the case $H \cap N \neq 1$.

Finally, suppose that $H \cap N = 1$, so $N$ is regular and we may identify $\O$ with $N$. By arguing as in the proof of Theorem \ref{t:red}, we deduce that $N$ is an elementary abelian $p$-group for some prime $p$, say $|N|=n=p^k$. In particular, $G$ is an affine group. If $\Delta(G) \subseteq N$ then $G$ is Frobenius by Lemma \ref{l:frob}, and we deduce that (ii) holds (here $H$ acts semiregularly on $\Omega \setminus \{\a\}$, with exactly two orbits). On the other hand, if $\Delta(G)\nsubseteq N$ then $N^*=x^G\subset \Delta(G)$ for some $x \in N^*$, and thus $H$ acts transitively on $N^*$, so $G$ is a $2$-transitive affine group.
\end{proof}

To complete the proof of Theorem \ref{t:main3}, we may assume that $G$ is a non-Frobenius $2$-transitive affine group. Write $G=HN$, where $H=G_{\a}$ and $N$ is a regular normal elementary abelian subgroup of order $p^k$ ($p$ prime). Assume that $\kappa(G)=2$, so $N^*=x^G$ and $\Delta(G) = x^G \cup y^G$ for some $x \in N^*$ and $y \in G \setminus N$. Note that $N \leqs \Centralizer_G(x)\leqs G=HN$ and $|x^G|=|G:\Centralizer_G(x)|=|N^*|=p^k-1$, so $\Centralizer_G(x)=N\Centralizer_H(x)$ and
\begin{equation}\label{e:oH}
|H|=|G:N|=|G:\Centralizer_G(x)|\cdot |\Centralizer_G(x):N|=(p^k-1)|\Centralizer_H(x)|.
\end{equation}
We need a couple of preliminary results.

\begin{lemma}\label{l:2ta1}
Let $C = \Centralizer_{H}(x)$. Then $|C| = p^br^c$, where $r \neq p$ is a prime and $b,c \geqs 0$.
\end{lemma}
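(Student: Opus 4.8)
The plan is to show that at most one prime distinct from $p$ can divide $|C|$, which immediately gives $|C| = p^b r^c$. By Cauchy's theorem it suffices to prove that all prime divisors of $|C|$ other than $p$ coincide. So I would fix a prime $s \neq p$ dividing $|C|$, choose $z \in C$ of order $s$, and extract a numerical constraint on $s$ from the hypothesis $\kappa(G)=2$.

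The core of the argument concerns the element $xz$. Since $z \in C = \Centralizer_H(x)$, the elements $x$ and $z$ commute, and as $x$ has order $p$ (it lies in the elementary abelian $p$-group $N$) while $z$ has order $s$ with $(p,s)=1$, they generate a cyclic group of order $ps$; in particular $|xz| = ps$. The key observation is that $(xz)^s = x^s z^s = x^s$, and $x^s$ is a nontrivial element of $N$ because $(s,p)=1$, so $(xz)^s = x^s \in N^* = x^G$ is a derangement. Now I would use the elementary fact that any element fixing a point has every power fixing that same point: contrapositively, since $(xz)^s$ is a derangement, $xz$ itself must be a derangement. Moreover $xz \notin N$, for otherwise $z = x^{-1}(xz) \in N \cap H = 1$, contradicting that $z$ has order $s$. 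Hence $xz$ lies in $\Delta(G) \setminus N = (x^G \cup y^G) \setminus N^* = y^G$, and therefore $|xz| = |y|$.

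Combining these, $ps = |xz| = |y|$, so $s = |y|/p$ is completely determined by $y$ and $p$. Thus every prime divisor of $|C|$ other than $p$ equals the single value $r := |y|/p$, and invoking Cauchy's theorem yields $|C| = p^b r^c$ with $b,c \geqs 0$, as required.

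I expect the only genuinely substantive step to be the passage from ``$(xz)^s$ is a derangement'' to ``$xz$ is a derangement''; everything else is bookkeeping with commuting elements of coprime order. The point worth stating carefully is the monotonicity of the derangement property under taking powers, together with the fact that $\kappa(G)=2$ forces the single off-$N$ class $y^G$ to have one fixed order, which is exactly what pins down the second prime.
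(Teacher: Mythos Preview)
Your argument is correct and essentially identical to the paper's: both pick an element of prime order $r \neq p$ in $C$, form its product with $x$ to get an element of order $pr$, show this product is a derangement by observing that its $r$-th power is a nontrivial element of $N^* = x^G$ (equivalently, if the product lay in some $H^g$ then so would $x$), and conclude from $\kappa(G)=2$ that the product lies in $y^G$, forcing $r = |y|/p$ to be uniquely determined. The only differences are cosmetic notation and your explicit verification that $xz \notin N$ via $N \cap H = 1$, where the paper instead uses $|xz| = pr \neq p$.
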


\begin{proof}
If $\Centralizer_H(x)$ is a $p$-group, then we are done. Assume that $|\Centralizer_H(x)|$ is
divisible by a prime $r\neq p$. Then $\Centralizer_H(x)$ contains an element of $u$ order $r$.
Let $z:=xu\in G$. Then $|z|=pr$ and $z$ is a derangement. Indeed, if $z\in
H^g$ for some $g\in G$, then $z^r=x^r\in H^g$, which implies that $\la x^r\ra=\la x\ra\leqs H^g$ as $(r,p)=1$, so $x\in H^g$ and this is a contradiction since $x \in \Delta(G)$. Since
$\kappa(G)=2$ and $|z|\neq |x|$, we
must have $z^G=y^G$. Therefore $r$ is uniquely determined and the result follows.
\end{proof}

In the next lemma, note that part (i) holds for \emph{any} non-Frobenius $2$-transitive group $G=HN$ such that $|N|=p^k$ and $p$ divides $|H|$.

\begin{lemma}\label{l:2ta2}
Let $H_p$ be a Sylow $p$-subgroup of $H$, and assume that $H_p \neq 1$.
\begin{itemize}\addtolength{\itemsep}{0.2\baselineskip}
\item[{\rm (i)}] $[N,H_p]$ is a proper subgroup of $N$, and $tz \in \Delta(G)$ for all $t \in H_p$ and all $z \in N \setminus [N,H_p]$.
\item[{\rm (ii)}] $H_p$ has exponent $p$ and $H_p^* \subseteq t^H$ for some $t \in H_p^*$. Furthermore, $|\Centralizer_H(x)|=p^b$ and thus $|H|=(p^k-1)p^b$, for some $b \geqs 1$.
\end{itemize}
\end{lemma}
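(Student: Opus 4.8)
The plan is to adopt affine coordinates, identifying $\Omega$ with $N=V=\F_p^k$, so that $H\leqs\GL(V)$ acts linearly and a general element of $G=N{\rtimes}H$ is an affine map $w\mapsto tw+c$ with $t\in H$ and $c\in V$. The basic tool is the derangement criterion: such a map fixes a point iff $(t-1)w=-c$ is solvable, i.e. iff $c\in(t-1)V$; hence an affine element with linear part $t$ and translation part $c$ lies in $\Delta(G)$ precisely when $c\notin(t-1)V$. Writing $[N,H_p]=\sum_{t\in H_p}(t-1)V$ in these coordinates, I would first treat part (i), which does not use $\kappa(G)=2$.

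For part (i), I would show $[N,H_p]$ is proper by a standard $p$-group argument: since $H_p$ is a nontrivial $p$-group acting on the nonzero $\F_p$-module $V$, the coinvariant quotient $V/[V,H_p]$ is nonzero, by the orbit-counting congruence modulo $p$ applied to the dual module $V^*$, whose fixed space $\Centralizer_{V^*}(H_p)$ is nonzero and is dual to $V/[V,H_p]$. For the second assertion, fix $t\in H_p$ and $z\in N\setminus[N,H_p]$. Since $[N,H_p]$ is normal in $NH_p$, it is $H_p$-invariant, so the translation part of the group product $tz$ (namely $z$, or its image under $t$, according to the chosen convention) again lies outside $[N,H_p]$; as $(t-1)V\subseteq[N,H_p]$, the criterion shows $tz\in\Delta(G)$. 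When $t=1$ this just records $z\in N^*\subseteq\Delta(G)$.

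For part (ii) I would bring in $\kappa(G)=2$. Given any $t\in H_p^*$, part (i) produces a derangement $tz$ with $z\notin[N,H_p]$; as $tz\notin N$ it cannot lie in $x^G=N^*$, so $tz\in y^G$. Thus all the elements $tz$ with $t\in H_p^*$ are $G$-conjugate, and projecting along $G\to G/N\cong H$ sends $tz$ to $t$, so all elements of $H_p^*$ are $H$-conjugate; this gives $H_p^*\subseteq t^H$ for any fixed $t\in H_p^*$. Conjugate elements share an order, and a nontrivial $p$-group has a central element of order $p$, so this common order is $p$ and $H_p$ has exponent $p$. Finally, by Lemma \ref{l:2ta1} we have $|\Centralizer_H(x)|=p^br^c$ with $r\neq p$ prime; if $c\geqs 1$ then the construction in that lemma yields a derangement $xu\in y^G$ of order $pr$, whereas each $tz\in y^G$ has $p$-power order (its linear part $t$ has order $p$), forcing the common order of $y^G$ to be simultaneously divisible by $r$ and a power of $p$, a contradiction. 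Hence $c=0$ and $|\Centralizer_H(x)|=p^b$; and since $H_p\neq 1$ forces $p\mid|H|=(p^k-1)|\Centralizer_H(x)|$ while $\gcd(p,p^k-1)=1$, we conclude $b\geqs 1$.

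The main obstacle I anticipate lies entirely in part (i): correctly establishing that $[N,H_p]$ is a proper subgroup (equivalently, that the coinvariants are nonzero) and matching the group-theoretic product $tz$ with its affine translation part so that the derangement criterion applies cleanly. Once these module-theoretic bookkeeping points are pinned down, part (ii) follows formally from $\kappa(G)=2$ together with Lemma \ref{l:2ta1}.
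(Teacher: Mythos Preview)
Your proof is correct and follows essentially the same approach as the paper's. The only differences are cosmetic: the paper shows $[N,H_p]\neq N$ via the lower central series of the $p$-group $P=NH_p$ rather than coinvariants/duality, and it verifies $tz\in\Delta(G)$ and $H_p^*\subseteq t^H$ by explicit conjugation calculations inside $G=HN$ rather than via your affine fixed-point criterion and the projection $G\to G/N\cong H$; these are equivalent packagings of the same computations.
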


\begin{proof}
Let $P=NH_p$ and observe that $P$ is a Sylow $p$-subgroup of $G$.

First consider (i). Let $c$ be the nilpotency class of $P$, so if we define $\gamma_0(P)=P$ and $\gamma_{i+1}(P)=[\gamma_i(P),P]$ for all $i \geqs 0$, then
$\gamma_c(P)=1$ and $\gamma_{c-1}(P)\neq 1$. Seeking a contradiction, suppose that $N=[N,H_p]$. Then $N\subseteq
[P,P]=\gamma_1(P)$, so
$$N=[N,H_p]\subseteq [\gamma_1(P),P]=\gamma_2(P)$$
and so on. In this way, we deduce that $N\subseteq \gamma_c(P)=1$, which is a contradiction. Hence $[N,H_p]\neq N$ and we fix an element $z \in N\setminus [N,H_p]$.

We claim that $tz \in \Delta(G)$ for all $t \in H_p$. Assume otherwise. Then $tz \in H^g$ for some $g\in G$. Since $G=HN$, we can write $g=hn$ with $h\in H, n\in N$. Then $tz\in
H^n$ and thus $ntzn^{-1}\in H$. Since $z,n\in N$ we have $zn=nz$ and
$$ntzn^{-1}=t (t^{-1}nt n^{-1})z=t[t,n^{-1}]z\in H.$$
Hence $[t,n^{-1}]z\in H\cap N=1$, which implies that $z=[n^{-1},t]\in [N,H_p]$, contradicting our choice of $z$. This completes the proof of part (i). 

Now let us turn to (ii). By (i), $tz \in \Delta(G)$ for all $t \in H_p$. If $t \in H_p^*$ then $tz\not\in N^*=x^G$, so $t z\in y^G$ and thus $(H_p^*)z\subseteq y^G$.

Let $s,t \in H_p^*$. Then $s z, tz \in y^G$, so $(tz)^g=sz$ for some $g=hn\in HN=G$ with $h\in H, n\in N$. It follows that $n^{-1}h^{-1}tzhn=sz$ so
$$s^{-1}t^h=n^s zn^{-1}(z^h)^{-1}\in H\cap N=1,$$
and thus $t^h=s$. Therefore $H_p^* \subseteq t^H$, so all elements in $H_p^*$ have the same order, which must be $p$.

Now $tz \in y^G$ and $tz\in P=NH_p$, so $y$ is a $p$-element and thus every element in
$\Delta(G)$ has $p$-power order. Let $C=\Centralizer_H(x)$. Suppose $|C|$ is divisible by a
prime $r\neq p$ and let $u \in C$ be an element of order $r$. Then $ux \in \Delta(G)$ has order $rp$, which is a contradiction. Therefore $|C|=p^b$ for some $b \geqs 1$, and the result follows (see \eqref{e:oH}).
\end{proof}

Let $G=HN \leqs {\rm Sym}(\O)$ be a primitive affine permutation group, where $|N|=p^k$ for a prime $p$. We may identify $\O$ with $N \cong (\mathbb{F}_p)^k$ and take $H$ to be the stabiliser of the zero vector in $N$, so $H \leqs {\rm GL}_{k}(p)$ is irreducible. The $2$-transitive affine permutation groups were classified by Hering \cite{Hering, Hering2} (also see \cite[Section 7.3]{Cam} and \cite[Appendix 1]{Liebeck}). Four infinite families arise, together with finitely many sporadic cases of degree at most $59^2$. By inspecting these cases, we can severely restrict the possibilities for a non-Frobenius $2$-transitive affine group $G$ with $\kappa(G)=2$.

For the remainder of this section, we will write $\mathcal{P}(n,i)$ for the $i$-th primitive permutation group of degree $n$ in the library of primitive groups stored in {\sc Magma} \cite{magma}, which can be accessed via the command \textsf{PrimitiveGroup}$(n,i)$.

\begin{proposition}\label{p:2ta}
Let $G=HN$ be a non-Frobenius $2$-transitive affine group of degree $p^k$, where $H \leqs {\rm GL}_{k}(p)$ as above. If $\kappa(G)=2$ then one of the following holds:
\begin{itemize}\addtolength{\itemsep}{0.2\baselineskip}
\item[{\rm (i)}] $H \leqs {\rm \Gamma L}_{1}(p^k)$;
\item[{\rm (ii)}] ${\rm SL}_{2}(q) \normeq H$, where $q^2 = p^k$;
\item[{\rm (iii)}] $G = \mathcal{P}(5^2,17) = 5^2{:}(2^{1+2}.6)$, $\mathcal{P}(11^2,42) = 11^2{:}(2^{1+2}.[30])$, $\mathcal{P}(3^4,70) = 3^4{:}((2\times Q_8){:}2){:}5$ or $\mathcal{P}(29^2,104) = 29^2{:}(7 \times 2.{\rm SL}_{2}(5))$.
\end{itemize}
Moreover, each group $G$ in {\rm (iii)} has the property $\kappa(G)=2$.
\end{proposition}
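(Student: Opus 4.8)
The plan is to feed Hering's classification \cite{Hering, Hering2} of the $2$-transitive affine groups into the preliminary lemmas already established. Up to the finitely many sporadic exceptions of degree at most $59^2$, that classification presents $H$ as lying in one of four infinite families: $H \leqs {\rm \Gamma L}_{1}(p^k)$; ${\rm SL}_{a}(q) \normeq H$ with $q^a = p^k$ and $a \geqs 2$; ${\rm Sp}_{a}(q) \normeq H$ with $q^a = p^k$ and $a \geqs 4$ even; or $G_2(q)' \normeq H$ with $q^6 = p^k$ and $p = 2$. Cases (i) and (ii) of the proposition are exactly the first family and the $a=2$ subcase of the second, so the task splits into eliminating every other infinite family and then sifting the finite list of sporadic examples. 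The key numerical engine is provided by the lemmas: whenever $p \mid |H|$, Lemma \ref{l:2ta2}(ii) forces $|H| = (p^k-1)p^b$, so the $p'$-part of $|H|$ equals exactly $p^k-1$; and in all cases Lemma \ref{l:2ta1} restricts $|\Centralizer_{H}(x)|$, hence the cofactor $|H|/(p^k-1)$, to have at most the two prime divisors $p$ and one further prime $r$.

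To dispose of the remaining infinite families I would note that each of ${\rm SL}_{a}(q)$ ($a \geqs 3$), ${\rm Sp}_{a}(q)$ ($a \geqs 4$) and $G_2(q)'$ has order divisible by $p$, so $H_p \neq 1$ and Lemma \ref{l:2ta2}(ii) applies, forcing the $p'$-part of $|H|$ to be $p^k-1 = q^a-1$. Since the relevant normal subgroup divides $|H|$, its $p'$-part must then divide $q^a-1$; but that $p'$-part is, respectively, $\prod_{i=2}^{a}(q^i-1)$, $\prod_{i=1}^{a/2}(q^{2i}-1)$ and $(q^6-1)(q^2-1)$, each of which contains $q^a-1$ \emph{together with} a further factor $q^2-1 > 1$ and so strictly exceeds $q^a-1$. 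A positive integer has no divisor larger than itself, so all three families are ruled out, leaving only (i) and (ii) among the infinite cases.

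It then remains to treat the finite list of sporadic $2$-transitive affine groups of degree $p^k \leqs 59^2$. Here I would first prune using the same numerical constraints: those with $p \mid |H|$ but $|H| \neq (p^k-1)p^b$ are killed by Lemma \ref{l:2ta2}(ii), and those with $p \nmid |H|$ for which $|H|/(p^k-1)$ has two distinct prime divisors coprime to $p$ are killed by Lemma \ref{l:2ta1}. For the short list of survivors I would compute $\kappa(G)$ directly with {\sc Magma} \cite{magma}, working with the stored primitive groups $\mathcal{P}(p^k,i)$; this isolates precisely the four groups of part (iii), each of which satisfies $p \nmid |H|$, $|H| = 2(p^k-1)$ and $|\Centralizer_{H}(x)| = 2$. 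The same computation verifies the ``Moreover'' clause, that each of these four groups indeed has $\kappa(G)=2$.

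I expect the principal obstacle to be the bookkeeping of the sporadic cases rather than any single hard step. Hering's exceptional list is not short: it mixes extraspecial-normaliser groups (degrees $5^2, 7^2, 11^2, 23^2, 3^4$) with groups involving ${\rm SL}_{2}(5)$, ${\rm SL}_{2}(13)$, $A_6$ and $A_7$, and one must enumerate them completely and apply the correct numerical lemma in each instance before resorting to machine computation. By contrast, the elimination of the infinite families is a uniform one-line order comparison once Lemma \ref{l:2ta2}(ii) is available.
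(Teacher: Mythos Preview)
Your proposal is correct and follows the same overall architecture as the paper's proof: invoke Hering's classification, eliminate the unwanted infinite families by arithmetic constraints on $|H|$, and settle the sporadic list by direct computation in {\sc Magma}. The one genuine difference lies in which arithmetic constraint you invoke for the infinite families.

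The paper uses only Lemma~\ref{l:2ta1}, extracting the condition that $|H|_{p'}/(p^k-1)$ is a prime power (equation~\eqref{e:11}). Applied to ${\rm SL}_a(q)$, ${\rm Sp}_a(q)$ and $G_2(q)'$, this forces $q^2-1$ to be a prime power, which still leaves the residual cases $(a,q)\in\{(3,2),(3,3)\}$ for ${\rm SL}$, $(a,q)\in\{(4,2),(4,3)\}$ for ${\rm Sp}$, and $q=2$ for $G_2$; the paper then disposes of each of these by an explicit {\sc Magma} computation of $\kappa(G)$. You instead invoke the sharper Lemma~\ref{l:2ta2}(ii), which (since $p\mid |H|$ in all three families) pins down $|H|_{p'}=q^a-1$ exactly, and then observe that the $p'$-part of the normal subgroup already exceeds $q^a-1$. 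This kills all three families in a single uniform stroke with no residual computations. Your route is cleaner here; the paper's route is slightly more elementary in that it avoids the exponent-$p$ and conjugacy assertions of Lemma~\ref{l:2ta2}(ii), but pays for this with a handful of extra machine checks. For the sporadic list both arguments reduce to the same {\sc Magma} search, and your observation that the four surviving groups all have $p\nmid |H|$ and $|H|=2(p^k-1)$ is correct.
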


\renewcommand{\arraystretch}{1.2}
\begin{table}
$$\begin{array}{llll} \hline
& n & H & \mbox{Conditions} \\ \hline
{\rm (i)} & p^k & H \leqs {\rm \Gamma L}_{1}(p^k) & \\
{\rm (ii)} & q^a & {\rm SL}_{a}(q) \normeq H \leqs {\rm \Gamma L}_{a}(q) & a \geqs 2 \\
{\rm (iii)} & q^a & {\rm Sp}_{a}(q) \normeq H & a \geqs 4 \\
{\rm (iv)} & q^6 & G_2(q)' \normeq H & p = 2 \\
{\rm (v)} & 5^2, 7^2, 11^2, 23^2 & {\rm SL}_{2}(3) \normeq H & \\
{\rm (vi)} & 3^4 & 2^{1+4} \normeq H & \\
{\rm (vii)} & 9^2, 11^2, 19^2, 29^2, 59^2 & {\rm SL}_{2}(5) \normeq H & \\
{\rm (viii)} & 2^4 & A_6 & \\
{\rm (ix)} & 2^4 & A_7 & \\
{\rm (x)} & 3^6 & {\rm SL}_{2}(13) & \\ \hline
\end{array}$$
\caption{$2$-transitive affine groups}
\label{t:hering}
\end{table}
\renewcommand{\arraystretch}{1}

\begin{proof}
By Hering's Theorem, the possibilities for $H$ are given in Table \ref{t:hering}, where $n=p^k$ denotes the degree of $G$. In order to prove the proposition, we need to eliminate cases (iii) -- (x), and also case (ii) with $a \geqs 3$.

As before, write $N^*=x^G$ and $\Delta(G) = x^G \cup y^G$, where $y \in G \setminus N$. Set $C = \Centralizer_H(x)$ and recall that $|H| = (p^k-1)|C|$. By Lemma \ref{l:2ta1}, it follows that
\begin{equation}\label{e:11}
\mbox{$\frac{|H|_{p'}}{p^k-1}$ is a prime power.}
\end{equation}
We start by considering the cases (ii), (iii) and (iv). Write $q=p^m$, so $ma=k$ and $q^a = p^k$ (where $a=6$ in case (iv)).

Suppose (ii) holds. If $a \geqs 4$ then
$(q^2-1)(q^3-1)$ divides $|H|_{p'}/(p^k-1)$, but this is incompatible with \eqref{e:11}. Now assume $a=3$. Here \eqref{e:11} implies that $q^2-1=r^t$ for some prime $r \neq p$ and integer $t \geqs 0$, so $p^{2m}=1+r^t$ and we deduce that $m=1$ and $p \in \{2,3\}$. If $p=2$ then ${\rm \Gamma L}_3(2)\cong \SL_3(2)$, so $H=\SL_3(2)$, $G=2^3{:}\SL_3(2)$ and using {\sc Magma} we calculate that $\kappa(G) = 5$. Similarly, if
$p=3$ then ${\rm \Gamma L}_3(3)=\GL_3(3)$ and thus $G= 3^3{:}\SL_3(3)$ or $3^3{:}\GL_3(3)$. Here we calculate that $\kappa(G)=10$ or $11$, respectively.

Now assume (iii) holds, so $a\geqs 4$ is even. If $a\geqs 6$ then
$(q^2-1)(q^4-1)$ divides $|H|_{p'}/(p^k-1)$, which contradicts \eqref{e:11}, so we may assume that $a=4$. Here $q^2-1=r^t$, where $r$ is a prime and $t \geqs 0$, so as in the previous case we deduce that $m=1$ and $p \in \{2,3\}$. In particular, $\Sp_4(p)\normeq H \leqs
\GL_4(p)$ with $p=2,3$. If $p=2$ then $H=\Sp_4(2)$ since $\Sp_4(2)$ is a maximal subgroup
of $\GL_4(2)$, so $G = 2^4{:}\Sp_4(2)$ and we calculate that $\kappa(G) = 10$. If $p=3$ then $H\cong \Sp_4(3)$ or $\Normalizer_{\GL_4(3)}(\Sp_4(3)) = \Sp_4(3).2$, and we find that $\kappa(G) =24$ or $18$, respectively.

Next consider (iv). Here $p=2$, $a=6$ and $q^2-1$ divides $|H|_{p'}/(p^k-1)$, so $q^2-1=r^t$ for some prime $r \neq p$ and integer $t \geqs 0$. The only possibility is $m=1$, so $G = 2^{6}{:}G_2(2)'$ or $2^{6}{:}G_2(2)$, and we calculate that $\kappa(G) = 10$ or $14$, respectively.

To complete the proof of the proposition, we need to deal with the remaining cases labelled (v) to (x) in Table \ref{t:hering}. In each of these cases we use the library of primitive groups in {\sc Magma} to determine the possiblities for $G$, and in each case we compute $\kappa(G)$.

Consider (v). Here $k=2$ and ${\rm SL}_{2}(3) \normeq H \leqs {\rm GL}_{2}(p)$, where $p \in \{5,7,11,23\}$. We use the library of primitive groups of degree $p^2$ to determine the possibilities for $G$ with $\kappa(G)=2$; we find that either $p=5$ and $G = \mathcal{P}(5^2,17) = 5^2{:}(2^{1+2}.6)$, or $p=11$ and $G = \mathcal{P}(11^2,42) = 11^2{:}(2^{1+2}.[30])$.
Similarly, in (vi) we find that the only example is $G = \mathcal{P}(3^4,70) = 3^4{:}((2\times Q_8){:}2){:}5$, and in (vii) the only example is $G = \mathcal{P}(29^2,104) = 29^2{:}(7 \times 2.{\rm SL}_{2}(5))$. Finally, in cases (viii), (ix) and (x) we calculate that $\kappa(G) = 5,6$ and $3$, respectively.
\end{proof}

We now focus on the possibilities that can arise in cases (i) and (ii) of Proposition \ref{p:2ta}. We begin with a preliminary lemma.

\begin{lemma}\label{l:sl2}
Let $G$ be the primitive affine group $q^2{:}{\rm SL}_{2}(q)$, where $q=2^m$ and $m \geqs 2$. Then $\kappa(G) \geqs 3$. 
\end{lemma}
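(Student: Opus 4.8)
The plan is to determine the derangements of $G$ explicitly. Identify $\Omega$ with $N\cong\F_q^2$, let $H=\SL_2(q)$ act naturally, and write a general element of $G=N\rtimes H$ as $(v,h)$ with $v\in N$, $h\in H$, acting by $w\mapsto hw+v$; then $(v_1,h_1)(v_2,h_2)=(v_1+h_1v_2,h_1h_2)$. A point $w$ is fixed by $(v,h)$ precisely when $(h-1)w=v$, so $(v,h)$ is a derangement if and only if $v\notin (h-1)N$. First I would note that $h-1$ is invertible unless $1$ is an eigenvalue of $h$: since the eigenvalues of $h\in\SL_2(q)$ are $\lambda,\lambda^{-1}$ we have $\det(h-1)=(1-\lambda)(1-\lambda^{-1})$, which vanishes only when $\lambda=1$. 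Hence the only linear parts $h$ that can occur in a derangement are $h=1$ and the nonidentity unipotent elements (transvections), since for all other $h$ we get $(h-1)N=N$ and no derangement.

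The identity part is immediate: the derangements $(v,1)$ with $v\neq 0$ are exactly the translations forming $N^*$, they all have order $2$, and they constitute a single $G$-class because $\SL_2(q)$ is transitive on $N\setminus\{0\}$. This is the class $N^*=x^G$ occurring in Proposition~\ref{p:red2}.

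Next I would handle the transvection part by fixing $u=\left(\begin{smallmatrix}1&1\\0&1\end{smallmatrix}\right)$. Here $L:=(u-1)N=\ker(u-1)=\la e_1\ra$ is a $1$-space, so the derangements with linear part $u$ are the elements $(v,u)$ with $v\notin L$; each has order $4$, since $(v,u)^2=((u-1)v,1)$ is a nontrivial translation when $v\notin L$. In particular these are distinct from the order-$2$ class $N^*$. The crucial computation is that $\Centralizer_H(u)=\{\left(\begin{smallmatrix}1&t\\0&1\end{smallmatrix}\right):t\in\F_q\}$ acts \emph{trivially} on the quotient $N/L$. Conjugating $(v,u)$ by $(w,k)$ yields $(kv+(u'-1)w,\,u')$ with $u'=kuk^{-1}$; so two derangements $(v_1,u),(v_2,u)$ with the same linear part $u$ are $G$-conjugate only through some $k\in\Centralizer_H(u)$, and then their images in $N/L$ agree. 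Triviality of this action means the residue $\bar v\in(N/L)\setminus\{0\}\cong\F_q^*$ is a complete conjugacy invariant, so these elements split into $q-1$ distinct $G$-classes of order-$4$ derangements.

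Combining the two parts gives $\kappa(G)=1+(q-1)=q$, and since $q=2^m\geqs 4$ we conclude $\kappa(G)\geqs 3$. The main obstacle is precisely the centraliser computation in the previous step: the whole bound hinges on showing that $\Centralizer_H(u)$ acts trivially on $N/L$, which is what prevents the $q-1$ candidate classes from fusing into one. (As a sanity check, the same count gives $\kappa=q=2$ when $q=2$, matching the appearance of $2^2{:}S_3\cong S_4$ as a $\kappa=2$ example in Table~\ref{tab:b}, so the hypothesis $m\geqs 2$ is sharp.)
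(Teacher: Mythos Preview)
Your argument is correct and, in fact, sharper than what the paper does: you compute $\kappa(G)=q$ exactly, whereas the paper only exhibits two non-conjugate order-$4$ derangements (together with $N^*$) to get $\kappa(G)\geqs 3$. The core idea is the same in both proofs --- pick a transvection $u$ and show that two derangements $(v_1,u)$ and $(v_2,u)$ with $\bar v_1\neq\bar v_2$ in $N/(u-1)N$ cannot be $G$-conjugate --- but the executions differ. The paper embeds $G$ in $\SL_3(q)$, invokes Lemma~\ref{l:2ta2}(i) to produce two explicit matrices $z_1,z_2$ (corresponding to your residues $1$ and $\gamma$ for a generator $\gamma$ of $\F_q^*$), and then rules out conjugacy by solving the matrix equation $z_1g=gz_2$ directly, reaching the contradiction $\gamma^2=1$. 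Your route is more conceptual and self-contained: you characterise all derangements via the fixed-point equation $(h-1)w=v$, and then extract the conjugacy invariant $\bar v\in N/L$ from the observation that $\Centralizer_H(u)$ acts trivially on $N/L$. This avoids the reliance on Lemma~\ref{l:2ta2} and the explicit $3\times 3$ matrix calculation, and as a bonus recovers the $m=1$ case $\kappa(S_4)=2$ as a consistency check.
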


\begin{proof}
Write $G=HN$, where $H = {\rm SL}_{2}(q)$ and $N$ is elementary abelian of order $q^2=2^{2m}$. We can embed $G$ into $\SL_3(q)$ as follows: 
\begin{align*}
G & =\left\{\left(\begin{array}{ccc}1 & 0 & 0 \\ \alpha & a & b \\\beta & c & d\end{array}\right) \mid\alpha,\beta,a,b,c,d\in \mathbb{F}_q,\, ad-bc=1\right\} \\
N & =\left\{\left(\begin{array}{ccc}1 & 0 & 0 \\ \alpha & 1 & 0 \\\beta & 0 & 1\end{array}\right) \mid\alpha,\beta\in \mathbb{F}_q \right\}\cong q^2 \\
H & =\left\{\left(\begin{array}{ccc}1 & 0 & 0 \\ 0 & a & b \\ 0 & c & d\end{array}\right) \mid a,b,c,d\in \mathbb{F}_q,\, ad-bc=1\right\}\cong \SL_2(q).
\end{align*}
Note that
$$H_2=\left\{\left(\begin{array}{ccc}1 & 0 & 0 \\ 0 & 1 & 0 \\ 0 & c & 1\end{array}\right) \mid c\in \mathbb{F}_q\right\}$$ 
is a Sylow $2$-subgroup of $H$. Direct computation shows that 
$$[N,H_2]=\left\{\left(\begin{array}{ccc}1 & 0 & 0 \\ 0 & 1 & 0 \\ \beta & 0 & 1\end{array}\right) \mid\beta\in \mathbb{F}_q\right\}.$$

By Lemma \ref{l:2ta2}(i), we deduce that 
$$z_1=\left(\begin{array}{ccc}1 & 0 & 0 \\ 0 & 1 & 0 \\ 0 & 1 & 1\end{array}\right)\left(\begin{array}{ccc}1 & 0 & 0 \\ 1 & 1 & 0 \\ 1 & 0 & 1\end{array}\right) = \left(\begin{array}{ccc}1 & 0 & 0 \\ 1 & 1 & 0 \\ 0 & 1 & 1\end{array}\right)$$ and  
$$z_2=\left(\begin{array}{ccc}1 & 0 & 0 \\ 0 & 1 & 0 \\ 0 & 1 & 1\end{array}\right)\left(\begin{array}{ccc}1 & 0 & 0 \\ \gamma & 1 & 0 \\ \gamma & 0 & 1\end{array}\right) = \left(\begin{array}{ccc}1 & 0 & 0 \\ \gamma & 1 & 0 \\ 0 & 1 & 1\end{array}\right)$$ 
are derangements, where $\gamma\in \mathbb{F}_q$ is a generator for $\mathbb{F}_q^*$. Since $z_1,z_2\not\in N$, it suffices to show that $z_1$ and $z_2$ are not $G$-conjugate. 

Seeking a contradiction, assume that $z_1^g=z_2$ for some $g\in G$, say  
$$g=\left(\begin{array}{ccc}1 & 0 & 0 \\ \alpha & a & b \\ \beta & c & d\end{array}\right)$$ 
where $a,b,c,d,\alpha,\beta\in \mathbb{F}_q$ and $ad-bc=1$. Now it follows from the equation $z_1^g=z_2$ that $z_1g=gz_2$ and hence 
\[\left(\begin{array}{ccc}1 & 0 & 0 \\\alpha+1 & a & b \\\alpha+\beta & a+c & b+d\end{array}\right)=\left(\begin{array}{ccc}1 & 0 & 0 \\\alpha+\gamma a & a +b& b \\\beta+\gamma c & c+d & d\end{array}\right)\]
which implies that \[\left\{ \begin{array}{ccc} \alpha+1&=&\alpha+\gamma a \\
a&=&a+b\\
\alpha+\beta&=&\beta+\gamma c\\
a+c&=&c+d\\
b+d&=&d\end{array}\right.\]
and thus
\[\left\{ \begin{array}{ccc} \gamma a&=&1 \\
b&=&0\\
d&=&a\\
\alpha&=&\gamma c.\end{array}\right.\]
Since $ad-bc=1$ we deduce that  
$$1=ad=a^2=\gamma^{-2}$$ 
and thus $\gamma^2=1$. This implies that $\gamma=1$, which is a contradiction since $m \geqs 2$.
\end{proof}

\begin{proposition}\label{p:2ta3}
Let $G=HN$ be a non-Frobenius $2$-transitive affine group of degree $p^k$, where ${\rm SL}_{2}(q) \normeq H$ and $q^2=p^k$. Then $\kappa(G)=2$ if and only if $G \cong S_4$.
\end{proposition}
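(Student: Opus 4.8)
The plan is to prove both implications, with $G\cong S_4$ (which arises precisely when $p=2$ and $q=2$, so that $G=2^2{:}\SL_2(2)$) as the only surviving case. The easy direction is immediate: when $G\cong S_4$ acts on $4$ points, the derangements are the $3$ double transpositions and the $6$ four-cycles, giving two classes, so $\kappa(G)=2$ (this is the first row of Table \ref{tab:b}). For the converse I assume $\kappa(G)=2$ and force $p=q=2$. Throughout I write $q=p^m$, so that $q^2=p^k$ gives $k=2m$, identify $N$ with $\F_q^2$, and note that by Hering's theorem (Table \ref{t:hering}) we may take $\SL_2(q)\normeq H\leqs {\rm \Gamma L}_2(q)$.

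The first and main step is to reduce to $H=\SL_2(q)$. Since $p$ divides $|\SL_2(q)|$ we have $H_p\neq 1$, so Lemma \ref{l:2ta2}(ii) yields $|H|=(q^2-1)p^b$ and, crucially, that a Sylow $p$-subgroup $H_p$ has exponent $p$. As $|\SL_2(q)|=(q^2-1)p^m$, the index $|H{:}\SL_2(q)|=p^{b-m}$ is a power of $p$ dividing the $p$-part of $|{\rm \Gamma L}_2(q){:}\SL_2(q)|=m(q-1)$, so $H/\SL_2(q)$ is a cyclic $p$-group generated by the image of a field automorphism. Suppose for contradiction that $b>m$, and choose $H_p$ to contain the upper unitriangular subgroup $U=\{u_t:t\in\F_q\}$, which is then a normal subgroup of $H_p$ equal to $H_p\cap\SL_2(q)$. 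Pick $g\in H_p$ mapping to a generator of $H_p/U$; since $g$ has order $p$ and maps to a generator of the cyclic group $H/\SL_2(q)$ of field automorphisms, the induced automorphism $\sigma$ of $\F_q$ has order exactly $p$ (and $b=m+1$). A direct calculation, using that $g$ normalises $U$ and so has upper triangular linear part, shows that conjugation by $g$ acts on $U\cong(\F_q,+)$ by $\theta(u_t)=u_{c\sigma(t)}$ for some $c\in\F_q^*$. Writing $gu=\theta(u)g$ and iterating gives $(gu)^p=\sum_{i=0}^{p-1}\theta^i(u)$ in $U$; since $\theta^i=c_i\sigma^i$ with $c_0=1$, this equals $\bigl(\sum_{i=0}^{p-1}c_i\sigma^i\bigr)(u)$. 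By Artin's lemma on the linear independence of the distinct automorphisms $\sigma^0,\dots,\sigma^{p-1}$ of $\F_q$, this operator is nonzero, so some $gu\in H_p$ has order $p^2$, contradicting the exponent bound. Hence $b=m$ and $H=\SL_2(q)$.

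It then remains to analyse $G=q^2{:}\SL_2(q)$. If $p=2$ and $q\geqs 4$ then $\kappa(G)\geqs 3$ by Lemma \ref{l:sl2}, while $q=2$ gives $G\cong S_4$, as required. If $p$ is odd I would exhibit three classes of derangements directly. From the conjugation formula $(h,n)^{(k,w)}=(h^k,\,kn+(1-h^k)w)$, the linear part of a derangement is $G$-invariant up to $H$-conjugacy, and an element $(h,n)$ with $h$ unipotent is a derangement exactly when $n\notin\mathrm{im}(h-1)$ (a proper, hence $1$-dimensional, subspace). The translations $N^*=x^G$ form one class; the two $\SL_2(q)$-classes of nontrivial unipotent elements (represented by $u_1$ and $u_\nu$ with $\nu$ a nonsquare) then yield two further, mutually distinct classes of derangements. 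Thus $\kappa(G)\geqs 3$ for all odd $q$, completing the converse.

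I expect the reduction step to be the main obstacle: ruling out proper field-automorphism extensions $H\supsetneq\SL_2(q)$ is exactly where the exponent-$p$ conclusion of Lemma \ref{l:2ta2} must be combined with the independence of field automorphisms. The two finishing cases (Lemma \ref{l:sl2} for even $q$, and the explicit unipotent derangements for odd $q$) are comparatively routine.
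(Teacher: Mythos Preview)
Your proof is correct, but the paper's argument is considerably shorter because it exploits the \emph{full} conclusion of Lemma~\ref{l:2ta2}(ii), namely that $H_p^{*}\subseteq t^{H}$ for some $t\in H_p^{*}$, not just that $H_p$ has exponent~$p$. This single fact dispatches both of your hard steps at once. For the reduction to $H=\SL_2(q)$: if $H=\SL_2(q)\langle\tau\rangle$ properly, pick any $1\neq\sigma\in H_p\cap\SL_2(q)$; then $\tau=\sigma^{h}$ for some $h\in H$, forcing $\tau\in\SL_2(q)$ since $\SL_2(q)\normeq H$, a contradiction. No Artin-type computation is needed. For the case $H=\SL_2(q)$ with $p$ odd: the same fusion statement says all nontrivial elements of a Sylow $p$-subgroup of $\SL_2(q)$ are $\SL_2(q)$-conjugate, which is false for odd~$q$ (there are two unipotent classes, exactly the $u_1$ and $u_\nu$ you use), so $p=2$ immediately. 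Your explicit construction of three derangement classes is then unnecessary.

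In short, you prove the right statements but by a longer and more computational route (Artin's lemma on independence of automorphisms, explicit coset derangements). The paper's approach buys brevity by squeezing more out of Lemma~\ref{l:2ta2}(ii); your approach has the minor virtue of showing that the exponent condition alone already suffices, and of exhibiting the extra derangement classes concretely rather than by contradiction.
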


\begin{proof}
Let us assume that $\kappa(G)=2$ and write $\Delta(G) = x^G \cup y^G$ as before, where $N^* = x^G$. Set $C = \Centralizer_{H}(x)$ and let $H_p$ be a Sylow $p$-subgroup of $H$. Recall that $|H|=(p^k-1)|C|$ (see \eqref{e:oH}). Write $k=2m$, where $m \geqs 1$ is an integer.

Here $|\SL_2(q)|=p^m(p^k-1)$ divides $|H|$ and thus $p^m$ divides $|C|$, so Lemma \ref{l:2ta2}(ii) implies that $|C|=p^b$ where $b\geqs m$. Therefore, $|H:\SL_2(p^m)|=p^{b-m}$.
Since $|{\rm \Gamma L}_2(p^m):\SL_2(p^m)|=m(p^m-1)$, it follows that $H\leqs \Gamma {\rm SL}_2(p^m)$ and thus $H/\SL_2(p^m)$ is cyclic. More precisely, either $H=\SL_2(p^m)$ or
$H=\SL_2(p^m)\la \tau\ra$ where $\tau$ is a $p$-element and $\tau^{p^{b-m}}\in \SL_2(p^m)$. By Lemma \ref{l:2ta2}(ii), $H_p$ has exponent $p$ and thus $|\tau|=p$.

First assume that $H=\SL_2(p^m)\la \tau\ra$ with $|\tau|=p$. Let $1 \neq \sigma\in H_p\cap \SL_2(p^m)$. By Lemma \ref{l:2ta2}(ii), $\tau=\sigma^h\in \SL_2(p^m)$ for some $h\in H$, which is a contradiction. Therefore, this case does not occur.

Finally, suppose that $H=\SL_2(p^m)$. Here $H_p$ is an elementary abelian $p$-group of order $p^m$. By Lemma \ref{l:2ta2}(ii), all nontrivial elements in $H_p$ are $H$-conjugate and we quickly deduce that $p=2$. If $m=1$ then $G\cong 2^2{:}S_3\cong S_4$ and $\kappa(G)=2$, and Lemma \ref{l:sl2} implies that $\kappa(G) \geqs 3$ if $m \geqs 2$.
\end{proof}

The next proposition completes the proof of Theorem \ref{t:main3}.

\begin{proposition}\label{p:2ta2}
Let $G=HN$ be a non-Frobenius $2$-transitive affine group of degree $p^k$, where $H \leqs {\rm \Gamma L}_{1}(p^k)$. Then $\kappa(G)=2$ only if $k$ is even and $|H|=2(p^k-1)$.
\end{proposition}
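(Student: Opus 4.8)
The plan is to combine the parametrisation of conjugacy classes in the split extension $G=N\rtimes H$ with a Burnside count of the fixed points of $H$ on $N$. Throughout I keep the notation from the paragraph preceding the statement, so $\Delta(G)=x^G\cup y^G$ with $N^*=x^G$ and $y\in G\setminus N$, and I identify $N$ with $\F_{p^k}$, writing ${\rm \Gamma L}_1(p^k)=\langle\sigma\rangle\ltimes\F_{p^k}^*$ with $\sigma:u\mapsto u^p$. Taking $x$ to be the translation by the vector $1$, an element $a\sigma^j\in H$ centralises $x$ exactly when it fixes $1$, i.e. when $a=1$; hence $C:=\Centralizer_H(x)=H\cap\langle\sigma\rangle$ is cyclic of order $e$ dividing $k$, with $C\neq1$ because $G$ is not Frobenius, and $|H|=(p^k-1)e$ by \eqref{e:oH}. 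The whole proposition now reduces to proving $e=2$: then $|H|=2(p^k-1)$ and $2=e\mid k$ forces $k$ even.

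Next I would record the derangement criterion in this setting: $(v,h)\in G$ is a derangement if and only if $v\notin{\rm im}(1-h)$, where $1-h$ is $\F_p$-linear on $N$. Thus a non-$N$ derangement with $H$-part $h\neq1$ exists precisely when $1-h$ is non-surjective, i.e. ${\rm Fix}_N(h)\neq0$, and every such derangement lies in the single class $y^G$. Choosing a nontrivial $\tau\in C$ (which fixes $\F_{p^{k/|\tau|}}\ni1$) I may take $y=(v_0,\tau)$, so every $h\neq1$ with ${\rm Fix}_N(h)\neq0$ is $H$-conjugate to $\tau$. Applying this to the nontrivial elements of the cyclic group $C$ shows they are all $H$-conjugate, hence of a common order; this forces $e$ to be a prime $\ell$, and ${\rm Fix}_N(\tau)=\F_{p^{k/\ell}}$.

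The heart of the argument is a two-way computation of $|\tau^H|$. First, since $H$ has exactly the two orbits $\{0\}$ and $N^*$ on $N$, the orbit-counting lemma gives $\sum_{h\in H}|{\rm Fix}_N(h)|=2|H|$; splitting the sum over $h=1$ (contributing $p^k$), over $h\in\tau^H$ (each contributing $p^{k/\ell}$), and over the remaining $h$ (each contributing $1$), and simplifying, yields $(p^k-1)(\ell-1)=|\tau^H|(p^{k/\ell}-1)$. Combined with $|\tau^H|=|H|/|\Centralizer_H(\tau)|$ this gives $|\Centralizer_H(\tau)|=\ell(p^{k/\ell}-1)/(\ell-1)$. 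Second, by the standard class parametrisation the $G$-classes of derangements with $H$-part in $\tau^H$ correspond to the $\Centralizer_H(\tau)$-orbits on $(N/{\rm im}(1-\tau))\setminus\{0\}$, a set of size $p^{k/\ell}-1$; as these derangements exhaust $y^G$, there is a single such orbit, so $\Centralizer_H(\tau)$ is transitive there and $(p^{k/\ell}-1)\mid|\Centralizer_H(\tau)|$. Dividing the two facts gives $|\Centralizer_H(\tau)|/(p^{k/\ell}-1)=\ell/(\ell-1)\in\Z_{>0}$, whence $\ell=2$.

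I expect the main obstacle to be the bookkeeping in the final step: one must check carefully that $\Centralizer_H(\tau)$ stabilises ${\rm im}(1-\tau)$ (immediate, since it commutes with $\tau$, hence with $1-\tau$) so that the quotient action is defined, and that $(v,\tau),(v',\tau)$ are $G$-conjugate exactly when their images in $N/{\rm im}(1-\tau)$ lie in one $\Centralizer_H(\tau)$-orbit — the $H$-part being rigid once one insists it remain $\tau$. A minor point to watch is the degenerate case $p^{k/\ell}-1=1$, where the transitivity divisibility is vacuous; there the integrality of the group order $|\Centralizer_H(\tau)|=\ell/(\ell-1)$ already forces $\ell=2$, so the conclusion is uniform.
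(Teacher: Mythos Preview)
Your argument is correct and takes a genuinely different route from the paper's own proof.

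The paper proceeds by first invoking Lemmas \ref{l:2ta1} and \ref{l:2ta2} and then splitting into cases according to whether $p$ divides $|C|$ or not. In each case it locates a normal complement $L$ of $C$ in $H$ with $|L|=p^k-1$ (or an analogue), and the key step is to show that a generator $t$ of $C$ is $H$-conjugate to $t^{-1}$; from $t^{-2}\in L\cap\langle t\rangle=1$ (or the corresponding real-element argument in the cyclic quotient $H/H_0$) one then reads off $|t|=2$. The case split is needed because Lemma \ref{l:2ta2} only applies when the Sylow $p$-subgroup of $H$ is nontrivial.

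Your approach avoids this dichotomy entirely. Once you know $C=H\cap\langle\sigma\rangle$ and that every $h\in H^*$ with a nonzero fixed vector is $H$-conjugate to $\tau$, the orbit-counting identity pins down $|\Centralizer_H(\tau)|$ exactly, and the single-orbit condition on $N/{\rm im}(1-\tau)$ gives the divisibility $(p^{k/\ell}-1)\mid|\Centralizer_H(\tau)|$. The quotient $\ell/(\ell-1)$ then forces $\ell=2$ in one stroke, with no separate treatment of the characteristic. This is more self-contained (it does not rely on Lemma \ref{l:2ta2}) and more transparent arithmetically; the paper's argument, by contrast, stays closer to the group-theoretic machinery already set up and yields as a byproduct some finer information (e.g.\ that $4\mid k$ in one sub-case). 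Both are short, but yours is the more uniform of the two.
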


\begin{proof}
Suppose $\kappa(G)=2$ and write $\Delta(G) = x^G \cup y^G$ as before, where $N^* = x^G$. Set $C = \Centralizer_{H}(x)$ and let $H_p$ be a Sylow $p$-subgroup of $H$. Note that $H$ is soluble and recall that $|H|=(p^k-1)|C|$.

Set $H_0=H\cap \GL_1(p^k)$ and note that $\GL_1(p^k)$ is cyclic of order $p^k-1$. Then
$H/H_0$ is also cyclic and $|H/H_0|$ divides $k$. Moreover, $NH_0$ is a
Frobenius group, so $H_0\cap C=1$ and $C \cong H_0C/H_0$ is cyclic. Write $|H_0|=(p^k-1)/d$
for some integer $d\geqs 1$. Since $|H|$ is divisible by $p^k-1$, it follows that $d$ divides
$|H:H_0|$. Therefore $H/H_0$ has a normal subgroup of order $d$, and the inverse image of this subgroup in $H$, say $L$, is a normal subgroup of $H$ containing $H_0$, and $|L|=p^k-1$. There are two cases to consider.

First assume that $|C|$ is divisible by $p$. Then $H_p \neq 1$, so Lemma \ref{l:2ta2}(ii) implies
that $H_p$ has exponent $p$ and $C$ is a $p$-group, say $|C|=p^b$. Since $C$ is cyclic we have
$p^b=p$ and thus $|H|=p(p^k-1)$, which implies that $H=LC$, $L\cap C=1$ and $C$ is a Sylow
$p$-subgroup of $H$. Write $C=\la t\ra$ with $|t|=p$. By Lemma \ref{l:2ta2}(ii), $t^h=t^{-1}$ for
some $h \in H$, say $h=t^sl$ where $l\in L$ and $s\in \Z$. Then $t^h=t^l=t^{-1}$ which implies that
$t^{-2}=[t,l]\in L\cap \la t\ra=1$. Therefore $|t|=2=p$ and $|H|=2(2^k-1)$. In particular, $k$ is even.

Now assume that $|C|$ is indivisible by $p$. Then $|C|=r^c$ for some prime $r\neq p$ and $c\geqs 1$ (see Lemma \ref{l:2ta1}). Write $C = \la t \ra$. As in the proof of Lemma \ref{l:2ta1}, $s x\in \Delta(G)\setminus x^G=y^G$ for all $1\neq s \in C$. It follows that $|C|=r$, so $|H|=r(p^k-1)$ and our aim is to show that $r=2$. Since $\{tx,t^{-1}x\}\subseteq y^G$, we deduce that $t^h=t^{-1}$ for some $h\in H$. There are now two cases to consider.

If $t\not\in L$, then $H=LC$ with $L\cap C=1$, and by arguing as above we deduce that $|t|=|C|=r=2$, $k$ is even and $G=(NL).2$, where $NL$ is a $2$-transitive Frobenius group.

Now assume that $t\in L$ for every subgroup $L$ of index
$r$ in $H$ with $H_0\leqs L$. Since $t$ fixes $x\in N^*$ it follows that $t\not\in H_0$, so $tH_0$ is a nontrivial real element of order $r$ in the cyclic group $H/H_0$, which implies that $tH_0$ is an involution and thus $r=2$. Since $t\in L$ and $|L|=p^k-1$ is even, it follows that $p$ is odd. Moreover, since
$H_0\normeq H_0C\normeq L\normeq H$ with $|H:L|=2$, $|H_0C:H_0|=2$ and $|H:H_0|$ dividing $k$, we deduce that $k$ is divisible by $4$. 
\end{proof}

In \cite[Section 15]{Foulser}, Foulser gives detailed information on the precise structure of the $2$-transitive affine groups $G=HN$ with $H \leqs {\rm \Gamma L}_{1}(p^k)$. To close this section, we show that $\kappa(G)=2$ in the special case $H={\rm GL}_{1}(p^k).2$ (with $k$ even). We thank Bob Guralnick for helpful comments on the proof.

\begin{proposition}\label{p:new}
Let $G=HN$ be a non-Frobenius $2$-transitive affine group of degree $p^k$, where $k$ is even and $H = {\rm GL}_{1}(p^k).2 \leqs {\rm \Gamma L}_{1}(p^k)$. Then $\kappa(G)=2$.
\end{proposition}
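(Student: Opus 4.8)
The plan is to realise $G$ concretely as a group of affine--semilinear maps of the field $\F_{q^2}$, where $q=p^{k/2}$, and then classify the derangements by a direct fixed-point analysis. Write $N=(\F_{q^2},+)$, identify $\GL_1(p^k)$ with $\F_{q^2}^*$ acting by multiplication, and take the ``$.2$'' to be the involutory field automorphism $\tau\colon x\mapsto x^q$ (the unique order-$2$ element of the Galois group, available since $k$ is even). Then every element of $G$ may be written as a triple $(a,b,\e)$ with $a\in\F_{q^2}^*$, $b\in\F_{q^2}$, $\e\in\{0,1\}$, acting by $x\mapsto a x^{q^\e}+b$, and the point stabiliser $H$ consists of the triples with $b=0$. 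Since $\F_{q^2}^*\leqs H$ acts transitively on $N^*$, the group $G$ is $2$-transitive (and non-Frobenius, as $\tau$ fixes $\F_q^*$ pointwise), so $N^*=x^G$ is a single class of derangements and it remains to understand the derangements in $G\setminus N$.

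First I would record the multiplication rule $(a_1,b_1,\e_1)(a_2,b_2,\e_2)=(a_1 a_2^{q^{\e_1}},\, a_1 b_2^{q^{\e_1}}+b_1,\, \e_1+\e_2)$ and note that $(a,b,\e)$ is a derangement precisely when it fixes no point of $\F_{q^2}$. For $\e=0$ this just recovers $N^*$. For $\e=1$ the fixed-point equation $a x^q+b=x$ reads $L_a(x)=-b$, where $L_a(x)=a x^q-x$ is an $\F_q$-linear endomorphism of $\F_{q^2}$. By Hilbert~90, $\kernel L_a\neq 0$ iff $a$ has norm $a^{q+1}=1$; hence $(a,b,1)$ is a derangement iff $a^{q+1}=1$ and $b\notin\mathrm{Im}(L_a)$. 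Counting the $q+1$ norm-one elements $a$ and, for each, the $q^2-q$ admissible values of $b$, the set $D_1$ of these derangements has size $q(q^2-1)$.

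The key step is then to show $D_1$ is a single $G$-class. Since $\Delta(G)$ and $N$ are normal, $D_1=\Delta(G)\cap(G\setminus N)$ is a normal subset, so it suffices to exhibit one $y\in D_1$ with $|\Centralizer_G(y)|=2q$: the class $y^G\subseteq D_1$ then has size $|G|/2q=q(q^2-1)=|D_1|$, forcing $y^G=D_1$. I would take $y=(1,b,1)$ with $b\notin\mathrm{Im}(L_1)$, using the identity $\mathrm{Im}(L_1)=\{z:z+z^q=0\}$ (both sides being the $1$-dimensional $\F_q$-space of trace-zero elements), so the condition is $\mathrm{Tr}(b):=b+b^q\neq 0$. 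Solving $gy=yg$ for $g=(c,d,\delta)$ gives $c=c^q$ (so $c\in\F_q^*$) together with $c b^{q^\delta}+d=d^q+b$. A short trace computation, namely $\mathrm{Tr}(c\,b^{q^\delta}-b)=(c-1)\mathrm{Tr}(b)$ in both parities $\delta\in\{0,1\}$, shows that solvability in $d$ of the second equation forces $c=1$, after which $d$ ranges over a single coset of $\F_q$ of size $q$; this yields exactly $2q$ solutions.

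The main obstacle is this centraliser computation: one must check that the trace obstruction $\mathrm{Tr}(b)\neq 0$ genuinely eliminates every $c\neq 1$ in both parity cases, which is exactly where the hypotheses enter --- that the field-automorphism part has order precisely $2$, and that $\mathrm{Im}(L_1)$ equals the trace-zero hyperplane. Once $|\Centralizer_G(y)|=2q$ is established, the order count closes the argument: $\Delta(G)=N^*\sqcup D_1$ splits into exactly two classes, so $\kappa(G)=2$. I expect the argument to be uniform in the characteristic, covering both $p=2$ and $p$ odd without modification.
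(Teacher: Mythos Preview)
Your proof is correct. The approach is closely related to the paper's but organised differently, and the difference is worth noting. The paper also writes $q^2=p^k$ and observes that $NL={\rm AGL}_1(q^2)$ is sharply $2$-transitive, so the only issue is the coset $NL\phi$; it then argues in two steps --- first reducing any derangement $y\in NL\phi$ into the subcoset $N\phi$ (by showing $y^2\in N$ and that all involutions in $L\phi$ are $L$-conjugate), and second showing the $q(q-1)$ non-involutions in $N\phi$ form a single orbit under $NC$ where $C=\Centralizer_{NL}(\phi)$, via the observation $\Centralizer_{NC}(z)=\Centralizer_N(z)$ of size $q$. You bypass the reduction step entirely: you count $|D_1|=q(q^2-1)$ directly from the fixed-point equation $L_a(x)=-b$, and then a single centraliser calculation $|\Centralizer_G(y)|=2q$ for $y=(1,b,1)$ with $\mathrm{Tr}(b)\neq 0$ forces $y^G=D_1$. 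Your trace-obstruction argument, showing $(c-1)\mathrm{Tr}(b)=0$ in both parities $\delta\in\{0,1\}$, is clean and makes the role of the order-$2$ field automorphism completely transparent. Both routes ultimately rest on computing a centraliser of size $q$ (yours in $NL$, the paper's in $NC$), but your global count avoids the intermediate coset manipulation and is more uniform.
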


\begin{proof}
Write $q^2=p^k$ and set $L={\rm GL}_{1}(q^2)$ and $H=L\la \phi \ra$, where $\phi$ is a field automorphism of order $2$. By Theorem \ref{t:main1}, $\kappa(G) \geqs 2$. Moreover, since  $NL = {\rm AGL}_{1}(q^2)$ is sharply $2$-transitive, it suffices to show that there is a unique $G$-class of derangements in the coset $NL\phi$. Let $y \in NL\phi$ be a derangement. To prove the proposition, we will show that $y^G$ meets $N\phi$, and then we prove that any two derangements in $N\phi$ are $G$-conjugate.

Consider $y^2 \in NL$. If $y^2 \in NL \setminus N$ then $y^2$ has a unique fixed point (since $NL$ is Frobenius), which contradicts the fact that $y$ is a derangement. Therefore, $y^2 \in N$. Now $y \in N\ell \phi$ for some $\ell \in L$, so $(\ell\phi)^2 \in N \cap L=1$ and thus $\ell \phi$ is an involution. We claim that $\ell \phi$ is $L$-conjugate to $\phi$. To see this, note that there are precisely $q+1$ involutions in the coset $L\phi$ (involutions correspond to elements in $L$ that are inverted under the action of $\phi$), and we calculate that $|\phi^L| = q+1$. This justifies the claim, and we deduce that $y^g \in N\phi$ for some $g \in G$. Set $z=y^g$.

It is easy to check that there are precisely $q-1$ involutions in the coset $N\phi$, each having $q$ fixed points. Therefore, $z$ is one of $q(q-1)$ elements in $N\phi$ of order at least $3$, and to complete the proof it suffices to show that any two of these elements are $G$-conjugate. Let 
$C=C_{NL}(\phi)$ and note that $z^{NC} \subset N\phi$, so it suffices to show that $|z^{NC}|=q(q-1)$. Now $|C| = |C_N(\phi)||C_L(\phi)| = q(q-1)$ and $|NC| = |N||C|/|N \cap C| = q^3(q-1)/q = q^2(q-1)$, so we need $|C_{NC}(z)|=q$.  Since $z^2 \in N^*$ and $NL$ is a Frobenius group, we have $C_{NL}(z^2) \leqs N$, so $C_{NL}(z^2) = C_N(z^2)$ and thus $C_{NL}(z) = C_{N}(z) = C_{NC}(z)$. Since $z$ acts on $N$ as a field automorphism of order $2$, we deduce that $|C_{N}(z)| = q$ and the result follows.
\end{proof}

\section{Zeros of characters}\label{s:zeros}

Let $G$ be a finite group, let $H$ be a proper subgroup of $G$ and let $H_G = \bigcap_{g \in G}H^g$
denote the core of $H$ in $G$. Set $$\Delta_{H}(G) = G \setminus \bigcup_{g\in G}H^g$$ and let
$\kappa_H(G)$ be the number of conjugacy classes in $\Delta_{H}(G)$. Note that if $H_G=1$ then $G$
is a permutation group on $G/H$, $\Delta_{H}(G)$ is the set of derangements in $G$, and
$\kappa_H(G)=\kappa(G)$ as before. The aim of this section is to prove Theorem \ref{t:main_app}.

Following \cite{Flavell}, a triple $(G,H,L)$ with $L\normeq H\leqs G$ is called a \emph{$W$-triple}
if $H\cap H^g\leqs L$ for every $g\in G\setminus H$. By a theorem of Wielandt, if $(G,H,L)$ is a $W$-triple then
$$M=G\setminus \bigcup_{g\in G} (H \setminus L)^g$$ is a normal subgroup of $G$ and we have $G=HM$
and $H\cap M=L$ (see \cite[Exercise
1, p.347]{SuzII} for a proof using character theory). The normal subgroup $M$ is called
the \emph{kernel} of the $W$-triple $(G,H,L)$. This is a natural generalisation of Frobenius'
theorem.

Let $\chi$ be a complex character of $G$. Recall that $x \in G$ is a \emph{zero} of $\chi$ if
$\chi(x)=0$. Let $n(\chi)$ be the number of $G$-classes on which $\chi$ vanishes. Note that the
conditions $\kappa_H(G)=1$ and $n(1_H^G)=1$ are equivalent, where $1_H^G$ is the permutation
character of $G$.

In the next lemma, we consider the structure of finite groups $G$ that contain a maximal subgroup $H$ such that $\kappa_H(G)=1$.

\begin{lemma}\label{general}
Let $H$ be a maximal subgroup of a finite group $G$ and assume that $\Delta_H(G)=x^G$ for some
$x\in G$. Let $N=H_G$ and $M=\la x^G \ra$. 
\begin{itemize}\addtolength{\itemsep}{0.2\baselineskip}
\item[{\rm (i)}] If $H \normeq G$, then $G$ is a Frobenius group with an abelian odd-order  kernel $H=G'$ of index two;
\item[{\rm (ii)}] If $H \not\normeq G$, then $N \normeq M \normeq G'$ and either $M=G=G'$, or $M\neq G$ and $(G,H,H\cap M)$ is a $W$-triple with kernel $M$.
\end{itemize}
\end{lemma}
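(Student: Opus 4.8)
The plan is to handle the two cases by quite different means: part (i) is a short direct computation, while part (ii) rests on Jordan's theorem together with an exact counting identity that forces a covering to be a partition.

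For part (i), since $H\normeq G$ we have $\Delta_H(G)=G\setminus H=x^G$, so in $G/H$ every nonidentity element is the image of some conjugate of $x$; hence all nonidentity elements of $G/H$ are conjugate, which forces $|G/H|=2$. Thus $[G:H]=2$ and $|\Centralizer_G(x)|=2$, so $x$ is an involution with $\Centralizer_G(x)=\la x\ra$. Every element of the coset $xH\subseteq x^G$ is then an involution, and expanding $(xh)^2=1$ shows $x$ inverts every element of $H$, so $H$ is abelian; moreover $H$ can contain no involution $h$, since such an $h$ would lie in $\Centralizer_G(x)=\la x\ra$. Hence $H$ has odd order, $x$ acts fixed-point-freely, $G$ is Frobenius with kernel $H$ of index two, and $H=G'$ because squaring is bijective on the odd-order group $H$ (so every element of $H$ is a commutator $[x,h]=h^{-2}$).

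For part (ii), I would first record two product decompositions. As $H$ is maximal and non-normal, $G'\not\leq H$, whence $HG'=G$; and since $x\in M\setminus\bigcup_gH^g$ we have $M\not\leq H$, whence $HM=G$. Next I claim $M\leq G'$: because $HG'=G$, the normal subgroup $G'$ is transitive on $\Omega=G/H$ (degree $n\geq 2$), so by Jordan's theorem $G'$ contains a derangement $z$; but $z\in\Delta_H(G)=x^G$, and $G'\normeq G$ then gives $x\in G'$, so $M=\la x^G\ra\leq G'$. If $M=G$ this yields $G=M\leq G'$, i.e. $G=G'$, which is the first alternative. So assume $M\neq G$ and set $L=H\cap M$; then $L\normeq H$ and $L\subsetneq H$ (otherwise $H\leq M$ would force $M\supseteq\la H^G\ra=G$).

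The heart of the matter is to show $(G,H,L)$ is a $W$-triple, i.e. $H\cap H^g\leq L$ for $g\notin H$, and here I would avoid a fixed-point analysis (which would need the rank of the action) in favour of counting. Using $M\normeq G$ and $x^G\subseteq M$ one verifies the set identity $\bigcup_g(H\setminus L)^g=\bigcup_g(H^g\setminus M)=(\bigcup_gH^g)\setminus M=G\setminus M$. Since $\Normalizer_G(H)=H$ (again by maximality and non-normality) and $\la H\setminus L\ra=H$, there are exactly $n=[G:H]$ distinct conjugates of $H\setminus L$, so $|G\setminus M|\leq n(|H|-|L|)$; on the other hand $[G:M]=[H:L]$ gives $|G\setminus M|=n(|H|-|L|)$ exactly. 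Equality forces the $n$ conjugates to be pairwise disjoint, i.e. $(H\setminus L)\cap(H\setminus L)^g=\emptyset$ for $g\notin H$; feeding $y\in H\cap H^g$ into this (if $y\notin M$ then $y\in(H\setminus L)\cap(H\setminus L)^g$) yields $y\in L$, which is the $W$-triple condition. The kernel of this $W$-triple equals $G\setminus\bigcup_g(H\setminus L)^g=M$ by the same identity, and finally $N=H_G\leq M$ follows by taking any $y\in N\subseteq H\cap H^x$ with $x\notin H$ and applying the condition; together with $M\leq G'$ this gives the chain $N\normeq M\normeq G'$. The main obstacle is exactly this $W$-triple step, and the exact counting equality $|G\setminus M|=n|H\setminus L|$ is what makes the covering a partition and thereby delivers disjointness for free; everything else is routine.
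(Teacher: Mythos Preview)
Your proof is correct and follows essentially the same line as the paper's: the index-two Frobenius structure in (i), and for (ii) the identity $\bigcup_g(H\setminus L)^g=G\setminus M$ together with the exact count $|G\setminus M|=n(|H|-|L|)$ forcing disjointness and hence the $W$-triple. The only cosmetic differences are that the paper obtains $N\leqs M$ directly (observing $nx\in\Delta_H(G)\subseteq M$ for $n\in N$) before the $W$-triple step rather than afterwards via $N\leqs H\cap H^x\leqs L$, and it phrases the inclusion $M\leqs G'$ without naming Jordan explicitly; neither changes the substance.
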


\begin{proof}
First assume that $H\normeq G$. Then $G/H\cong \Z_p$ for some prime $p$ as $H$ is normal and
maximal in $G$.  Since $\Delta_H(G)=G\setminus H=x^G$, $G/H$ has exactly two conjugacy classes and
thus $|G:H|=p=2$. Hence, $G=H\cup Hx$ and $Hx=G\setminus H=x^G,$ where  $H\cap Hx=\emptyset$. Thus
$$|x^G|=|G:\Centralizer_G(x)|=|H|=\frac{1}{2}|G|.$$
Therefore, $|\Centralizer_G(x)|=2$ and so $\Centralizer_G(x)=\la x\ra$ is cyclic of order $2$.
Clearly, $G'\leqs H.$ Now, if $h\in H$ then $hx\in Hx=x^G$, so $hx=x^g$ for some $g\in G$ and thus
$h=x^gx^{-1}\in G'$. Therefore $H\leqs G'$ and thus $H=G'$. As $\Normalizer_G(\la
x\ra)=\Centralizer_G(x)=\la x\ra$, we deduce that $G$ is a Frobenius group with Frobenius
complement $\la x\ra$ of order $2$ and a  Frobenius kernel $G'$ of odd order.  Moreover, since each
element $h\in H=G'$ can be written in the form $h=x^gx^{-1}$ for some $g\in G$, we have
\[h^x=x^{-1}x^gx^{-1}x=x^{-1}x^g=x(x^g)^{-1}=h^{-1}.\]
Therefore $x$ inverts every element of $G'$, so $G'$ is abelian.

Now assume $H$ is not normal in $G$. Then $G'\not\leqs H$ and thus $G=HG'$ and $H\cap G'< G'$, so
$x^G\cap G'$ is nonempty. Let $y\in x^G\cap G'$. Clearly, $\Delta_H(G)=y^G=x^G$, hence $M=\la
x^G\ra=\la y^G\ra \normeq G'$ as $y \in G' \normeq G$. Next, we claim that $N\leqs M$. Let $n\in
N$. If $nx\in\bigcup_{g\in G}H^g$ then $nx\in H^z$ for some $z\in G$, but $n\in N=N^z\leqs H^z$ and
thus $x\in n^{-1}(H^z)=H^z$, which is a contradiction. Therefore, $nx\in \Delta_H(G)=x^G$, which
implies that $nx\in M$ and so $n\in M$ as $x\in M$. We conclude that $N\normeq M\normeq G'$, as
claimed.

If $M=G$, then $M=G=G'$ and we are done. Now assume that $M\neq G$. Let
$k=|G:\Normalizer_G(H)|=|G:H|$ and let $\{H^{g_1}, \ldots, H^{g_k}\}$ be the set of distinct
conjugates of $H$ in $G$. Since $x\in M\setminus H$, we deduce that $G=HM$ and thus
$k=|G:H|=|M:L|$, where $L=H\cap M\normeq H$. Observe that
\[G\setminus M=\left(\bigcup_{i=1}^k H^{g_i}\right) \setminus \bigcup_{i=1}^k (H^{g_i}\cap M)=\left(\bigcup_{i=1}^k
H^{g_i}\right)-\bigcup_{i=1}^k (H\cap M)^{g_i}=\bigcup_{i=1}^k (H\setminus L)^{g_i}.\] It follows that
\[|G|-|M|=|G\setminus M|=|\bigcup_{i=1}^k (H\setminus L)^{g_i}|\leqs k|H\setminus L|=k(|H|-|L|).\] Since $|G|=k|H|$ and
$|M|=k|L|$, we deduce that $|G|-|M|=k(|H|-|L|)$ and thus
\[|\bigcup_{i=1}^k (H\setminus L)^{g_i}|=\sum_{i=1}^k|(H\setminus L)^{g_i}|.\]
Therefore, $(H\setminus L)\cap (H\setminus L)^g=\emptyset$ for all
$g\in G\setminus H$, and thus $(G,H,L)$ is a $W$-triple with kernel $M$.
\end{proof}

Recall that if $G \leqs {\rm Sym}(\Omega)$ is a transitive permutation group of degree $n \geqs 2$
with point stabiliser $H$ then $\Delta(G) \geqs |H|$, with equality if and only if $G$ is sharply
$2$-transitive (see \cite{CC}). The next lemma gives a similar lower bound on $|\Delta_H(G)|$ for
any finite group $G$ and proper subgroup $H$.

\begin{lemma}\label{lower bound}
Let $G$ be a finite group, let $H$ be a proper subgroup of $G$, and set $N=H_G$. Then $|\Delta_H(G)|=|\Delta_{H/N}(G/N)|\cdot |N|\geqs |H|$.
\end{lemma}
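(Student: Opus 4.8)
The plan is to reduce the statement to the permutation-group setting covered by the Cameron--Cohen bound quoted just above. The key observation is that, since $N=H_G$ is normal in $G$ and contained in $H$, we have $N=N^g\leqs H^g$ for every $g\in G$; thus each conjugate $H^g$ is a union of left cosets of $N$, and hence so is $\bigcup_{g\in G}H^g$ and its complement $\Delta_H(G)$. In particular, whether or not an element $x$ lies in $H^g$ depends only on the coset $xN$.

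First I would make this precise by passing to the quotient. Write $\pi\colon G\to G/N$ for the natural projection. For $x,g\in G$ we have $x\in H^g$ if and only if $xN\in H^g/N=(H/N)^{gN}$, because $N\leqs H^g$ forces $H^g$ to be the full $\pi$-preimage of $H^g/N$. Taking the union over all $g$ and complementing shows that $\Delta_H(G)=\pi^{-1}(\Delta_{H/N}(G/N))$, and since every fibre of $\pi$ has size $|N|$ we obtain the product formula
$$|\Delta_H(G)|=|\Delta_{H/N}(G/N)|\cdot|N|.$$

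Next I would observe that $(G/N,H/N)$ is exactly a point stabiliser in a faithful transitive action. Indeed, by the correspondence theorem the core of $H/N$ in $G/N$ equals $\left(\bigcap_{g\in G}H^g\right)/N=N/N=1$, so $G/N$ acts faithfully and transitively on the coset space $(G/N)/(H/N)\cong G/H$ of degree $n=|G:H|\geqs 2$, with point stabiliser $H/N$. In this action $\Delta_{H/N}(G/N)$ is precisely the set of derangements.

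Finally, applying the Cameron--Cohen lower bound \cite{CC} to this transitive permutation group gives $|\Delta_{H/N}(G/N)|\geqs|H/N|=|H|/|N|$, and substituting into the displayed product formula yields $|\Delta_H(G)|\geqs|H|$, as required. There is no serious obstacle here; the only point that genuinely needs care is the first step---verifying that $\Delta_H(G)$ is a full union of $N$-cosets, i.e. an honest $\pi$-preimage---after which everything reduces to the bound for transitive groups already recorded before the lemma.
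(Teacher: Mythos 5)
Your proof is correct and follows essentially the same route as the paper: both pass to the quotient $G/N$ acting on the cosets of $H/N$, identify $\Delta_H(G)$ as the full preimage of $\Delta_{H/N}(G/N)$ (the paper does this by checking both inclusions on coset representatives $Na_i$, you by noting each $H^g$ is a union of $N$-cosets), and then invoke the Cameron--Cohen bound $|\Delta_{H/N}(G/N)|\geqs |H:N|$. Your explicit verification that the core of $H/N$ in $G/N$ is trivial, so that the quotient action is faithful, is a point the paper leaves implicit, but otherwise the two arguments coincide.
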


\begin{proof}
Let $\Omega$ be the set of right cosets of $H/N$ in $G/N$ and note that $G/N$ is a transitive
permutation group on $\Omega$ with point stabiliser $H/N$. Write
$$\Delta_{H/N}(G/N)=\{Na_1,Na_2,\ldots,Na_k\}$$
and note that $k \geqs |H:N|$ (see \cite{CC}), so to complete the proof it suffices to show that $\Delta_H(G)=\bigcup_{i=1}^k{Na_i}$.

Let $n \in N$ and $i\in \{1,2,\ldots, k\}$. If $na_i\in H^g$ for some $g\in G$ then since $n\in
N\normeq G$ and $N\leqs H$, we have $Na_i=Nna_i\in (H/N)^{Ng}$, which is a contradiction.
Conversely, if $a\in \Delta_H(G)$, then $Na\in \Delta_{H/N}(G/N)$ so $Na=Na_j$ for some $j\in
\{1,2,\ldots, k\}$. We conclude that $\Delta_H(G)=\bigcup_{i=1}^k{Na_i}$ and the result follows.
\end{proof}

\begin{lemma}\label{order}
Let $H$ be a maximal subgroup of a finite group $G$ and assume that $\Delta_H(G)=x^G$ for some $x\in G$. Then $x$ is a $p$-element
 and $\Centralizer_G(x)$ is a $p$-group for some prime $p$.
\end{lemma}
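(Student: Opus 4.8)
The plan is to pass to the primitive quotient $G/N$, where $N=H_G$, locate a derangement of prime-power order there via the Fein--Kantor--Schacher theorem \cite{FKS}, and then transfer this information back to $x$ through the primary decomposition of $x$. Two elementary closure properties of $\Delta_H(G)$ drive everything. First, if $y\in\Delta_H(G)$ and $m$ is coprime to $|y|$, then $\langle y^m\rangle=\langle y\rangle$, so $y^m\in H^g$ would force $y\in H^g$; hence every generator of $\langle y\rangle$ again lies in $\Delta_H(G)$, and in particular all generators of $\langle x\rangle$ are $G$-conjugate to $x$. Second, if $y\in\Delta_H(G)$ and $u\in\Centralizer_G(y)$ has order coprime to $|y|$, then $y\in\langle yu\rangle$, so $yu\in\Delta_H(G)=x^G$ and thus $|yu|=|y|$; this is the device I will use to pin down $\Centralizer_G(x)$.

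The main step is to prove that $x$ has prime-power order. Since $H$ is maximal, $H/N$ is a core-free maximal subgroup of $G/N$, so $G/N$ acts faithfully and primitively on $(G/N)/(H/N)$; by the correspondence in Lemma \ref{lower bound} the image $\bar x=Nx$ satisfies $\Delta_{H/N}(G/N)=\bar x^{G/N}$, a single conjugacy class. By \cite{FKS} this primitive group contains a derangement of prime-power order, which must therefore be conjugate to $\bar x$; hence $\bar x$ is a nontrivial $p$-element for some prime $p$. Now suppose, for contradiction, that some prime $q\neq p$ divides $|x|$, and write $x=x_qx_{q'}$ as the product of its commuting $q$-part and $q'$-part. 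Since $\bar x$ is a $p$-element, its $q$-part is trivial, so $\overline{x_q}=1$ and $x_q\in N\leqs H$; meanwhile $\overline{x_{q'}}=\bar x$ is still a derangement of $G/N$, so $x_{q'}\in\Delta_H(G)=x^G$ and hence $|x_{q'}|=|x|$. As $q$ divides $|x|$ this is absurd, so $|x|=p^a$ is a prime power.

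With $x$ a $p$-element, the centralizer closure property finishes the argument: if a prime $r\neq p$ divided $|\Centralizer_G(x)|$, then choosing $u\in\Centralizer_G(x)$ of order $r$ would produce a derangement $xu\in x^G$ of order $p^ar\neq p^a$, a contradiction. Therefore $\Centralizer_G(x)$ is a $p$-group, completing the proof.

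I expect the delicate point to be the lifting step rather than the reduction itself. Applying \cite{FKS} to $G/N$ only yields that $\bar x$ is a $p$-element, and an arbitrary power of a derangement need not be a derangement, so some care with the primary decomposition is required to upgrade this to the statement about $x$. The clean way around the obstacle is to isolate the $q'$-part $x_{q'}$ (which inherits the derangement property from $\bar x$) rather than a single prime-power part, and to note that the complementary $q$-part is absorbed into the core $N$; once $x$ is known to be a $p$-element, controlling $\Centralizer_G(x)$ is immediate from the coprime-element observation.
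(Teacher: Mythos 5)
Your proof is correct and follows essentially the same route as the paper's: reduce to the primitive quotient $G/N$, apply the Fein--Kantor--Schacher theorem there, transfer back using the fact that $N$-translates of derangements are derangements, and finish the centralizer claim with the coprime commuting-element trick. The only difference is cosmetic: where you isolate the $q$-part and $q'$-part of $x$ (noting $x_q \in N$ and $x_{q'} \in x^G$), the paper writes $1 = up^a + vm$ and exhibits the $p$-part $x^{vm}$ of $x$ as $nx$ with $n = x^{-up^a} \in N$ --- the same primary-decomposition idea expressed through explicit B\'ezout coefficients.
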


\begin{proof}
Let $N=H_G$.  As in the proof of the previous lemma, first note that $G/N$ is a transitive permutation group on
the set of right cosets of $H/N$ in $G/N$. By \cite[Theorem 1]{FKS}, $Nx\in G/N$ is a derangement
of order $p^b$ for some prime $p$ and integer $b\geqs 1$. Write $|x|=p^am$ with $(p,m)=1$ and
$a\geqs 1$. Then $a\geqs b$ and there exist $u,v\in \Z$ with $1=up^a+vm$. We have that
$x^{p^a}=(x^{p^b})^{p^{a-b}}\in N$ and thus $n^{-1}:=x^{up^a}\in N$. Clearly, $nx=x^{mv}\in
\Delta_H(G)= x^G$, so $x^{mv}$ and $x$ have the same order. It follows that $m=1$ and hence $x$ is
a $p$-element (with $|x|=p^a$).

Finally, seeking a contradiction, suppose that $\Centralizer_G(x)$ is not a $p$-group. Let $r\neq p$ be a prime divisor of
$|\Centralizer_G(x)|$ and fix $y\in\Centralizer_G(x)$ with $|y|=r$. Then $|xy|=p^ar$. Since
$(p^a,r)=1$, we can write $1=up^a+vr$ for some $u,v \in \Z$. Assume that $xy\not\in\Delta_H(G)$.
Then $xy\in H^g$ for some $g\in G$. We have that $x^r=(xy)^r\in H^g$, so $x^{vr}\in H^g$ and thus
$x=x^{up^a}x^{vr}=x^{vr}\in H^g$, which is a contradiction. Therefore $xy\in \Delta_H(G)=x^G$, but
this is not possible since $|xy|=r|x|\neq |x|$. We conclude that $\Centralizer_G(x)$ is a
$p$-group, as required.
\end{proof}

\begin{rem}\label{rem1}
Let $G$ be a finite group and let $\chi$ be a nonlinear irreducible character of $G$ such that $\chi=\phi^G$ and $n(\chi)=1$ for some $\phi\in\Irr(H)$ and proper subgroup $H<G$. Then 
$\Delta_H(G)=x^G$ for some $x\in G$ with $\chi(x)=0$, and thus $\kappa_H(G)=1$. However, the condition  $\kappa_{H}(G)=1$ for some subgroup $H$ of $G$ does
\emph{not} imply that $G$ admits an irreducible character $\chi = \phi^G$ for some $\phi \in \Irr(H)$ with the property $n(\chi)=1$. For example, Theorem \ref{t:main1} implies that $\kappa_H(G)=1$ if 
$(G,H)=(A_5,D_{10})$, but no
irreducible character of $H$ can induce irreducibly to $G$.
\end{rem}

We are now in a position to comlete the proof of Theorem \ref{t:main_app}, on the normal structure of finite groups $G$ with an induced irreducible character $\chi$ such that $n(\chi) = 1$. In order to state the result, let us recall that if $N$ is a proper nontrivial normal subgroup of $G$ then $(G,N)$ is  a \emph{Camina pair} if and only if $|\Centralizer_G(g)|=|\Centralizer_{G/N}(Ng)|$ for all $g\in
G\setminus N$. In addition, $G$ is a \emph{Camina group} if $(G,G')$ is a Camina pair.

\begin{rem}
As noted in Remark \ref{rem:main}(b), Theorem \ref{zeros} below can be viewed as a generalisation of \cite[Theorem 9]{Dixon} and \cite[Theorem 1.1]{Qian}, which give partial structural information in the case where $G$ is soluble. More precisely, \cite[Theorem 9(e)]{Dixon} states that if $G$ is a finite soluble group with an imprimitive irreducible character $\chi$ such that $n(\chi)=1$, then $G$ has a normal subgroup $L$ such that $G/L$ is a $2$-transitive Frobenius group of prime power degree. Similarly, assuming $G$ is soluble, parts (2) and (3) in \cite[Theorem 1.1]{Qian} correspond to parts (i)(a,b) in Theorem \ref{zeros} (note that the conclusion in part (1) of \cite[Theorem 1.1]{Qian} coincides with part (i) in Lemma \ref{general}).
\end{rem}

\begin{theorem}\label{zeros}
Let $H$ be a maximal subgroup of a finite group $G$ and assume that $\Delta_H(G)=x^G$ for some $x\in G$. Let $N=H_G$, $M=\la x^G \ra$ and assume that $H$ is not normal in $G$. Then one of the following holds:
\begin{itemize}\addtolength{\itemsep}{0.2\baselineskip}
\item[{\rm (i)}] $G/N$ is a $2$-transitive Frobenius group with an elementary abelian kernel $M/N$ of
order $p^n$ for some prime $p$, and a complement $H/N$ of order $p^n-1$.
Moreover, $x^G=M \setminus N$, $|\Centralizer_G(x)|=p^n$,  $|x^G|=|H|$, $M'=N$ and one of the following holds:

\vspace{1mm}

\begin{itemize}\addtolength{\itemsep}{0.2\baselineskip}
\item[{\rm (a)}] $M$ is a Frobenius group with kernel $M'$ and $p^n=p>2$.
\item[{\rm (b)}] $M$ is a Frobenius group with kernel $K\normeq G$ such that $G/K\cong \SL_2(3)$ and $M/K\cong Q_8$.
\item[{\rm (c)}] $M$ is a Camina $p$-group.
\end{itemize}

\item[{\rm (ii)}] $G/N\cong {\rm L}_2(8){:}3$, $H/N\cong D_{18}{:}3$, $N$ is a nilpotent $7'$-group and $\Centralizer_G(x)=\la x\ra \cong \Z_7$.

\item[{\rm (iii)}] $G/N\cong A_5$, $H/N\cong D_{10}$, $N$ is a $2$-group and $\Centralizer_G(x)=\la x\ra \cong \Z_3$.
\end{itemize}
In particular, if $G=G'$ then either case {\rm (i)(c)} holds with $p^n=11^2$ and $G/N\cong
11^2{:}\SL_2(5)$, or case {\rm (iii)} holds.
\end{theorem}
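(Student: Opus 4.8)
The plan is to reduce modulo the core $N=H_G$ and invoke Theorem~\ref{t:main1}. Writing $\bar G = G/N$ and $\bar H = H/N$, the proof of Lemma~\ref{lower bound} shows that $\Delta_H(G)$ is exactly the preimage of $\Delta_{\bar H}(\bar G)$ under the quotient map, so it is a union of full $N$-cosets. Since $\Delta_H(G)=x^G$ is a single conjugacy class, its image $\Delta_{\bar H}(\bar G)=\bar x^{\bar G}$ is a single $\bar G$-class, whence $\kappa(\bar G)=1$. As $\bar H$ is a core-free maximal subgroup of $\bar G$, the group $\bar G$ is primitive and Theorem~\ref{t:main1} applies. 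The degenerate option $\bar G=S_2$ would force $\bar H=1$ and hence $H=N\normeq G$, which is excluded; thus either $\bar G$ is a $2$-transitive Frobenius group (leading to (i)), or $(\bar G,\bar H)=(A_5,D_{10})$ (leading to (iii)) or $({\rm L}_{2}(8){:}3,D_{18}{:}3)$ (leading to (ii)).

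For the Frobenius case I would first identify $M/N$ with the Frobenius kernel: since $\bar x$ is a nontrivial kernel element and the kernel is the unique minimal normal subgroup of $\bar G$, we get $M/N=\la \bar x^{\bar G}\ra$ equal to the kernel, elementary abelian of order $p^n$, with $\bar H$ a complement of order $p^n-1$. The identities $|x^G|=|H|$ and $|\Centralizer_G(x)|=p^n$ then follow from Lemma~\ref{lower bound} and an index count, using Lemma~\ref{order} to see that $\Centralizer_G(x)$ is a $p$-group: as $G/M$ has the $p'$-order $p^n-1$, this forces $\Centralizer_G(x)\leqs M$, whence $\Centralizer_M(x)=\Centralizer_G(x)$ has order $p^n$ and $x^M=xN$ is the whole coset. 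Running this over all elements of $M\setminus N=x^G$ shows that $(M,N)$ is a Camina pair and, since the commutators $[x,m]$ (for $m \in M$) sweep out all of $N$, that $M'=N$.

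The heart of the argument, and the step I expect to be the main obstacle, is extracting the trichotomy (a)--(c) from the Camina pair $(M,N)$. Here I would appeal to the classification of Camina pairs: either $M$ is a $p$-group, giving the Camina $p$-group of (c); or $M$ is not a $p$-group, in which case $M$ is a Frobenius group whose kernel $K$ is a $p'$-group and whose complement $M/K$ is a $p$-group. Since $M/K$ is a Frobenius complement whose abelianisation $M/N$ is elementary abelian, its Sylow structure forces $M/K$ to be cyclic of prime order (when $p$ is odd, giving $N=K$, $p^n=p$ and case (a)) or quaternion $Q_8$ (when $p=2$, giving $p^n=4$ and, together with the action of the order-$3$ complement of $\bar G$, the configuration $G/K\cong\SL_2(3)$, $M/K\cong Q_8$ of case (b)); the possibility $p^n=2$ is ruled out because $H$ is not normal. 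The delicate points are justifying the precise Frobenius-complement dichotomy and matching it to the global $2$-transitive structure of $\bar G$.

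Finally, in cases (ii) and (iii) the derangement $\bar x$ is self-centralising of order $7$ (respectively $3$) in $\bar G$, so $\Centralizer_G(x)=\la x\ra$ and $x$ acts fixed-point-freely on $N$; Thompson's theorem gives that $N$ is nilpotent, and a Sylow argument (a $p$-element cannot act fixed-point-freely on a nontrivial $p$-group) shows that $N$ is a $7'$-group in (ii) and a $3'$-group in (iii). To sharpen (iii) to ``$N$ is a $2$-group'' I would reduce to the chief factors of $G$ inside $N$ -- elementary abelian $r$-groups that, by nilpotency of $N$, are irreducible $\F_r A_5$-modules -- and check via the (Brauer) character table of $A_5$ that a $3$-element has a nonzero fixed space on every such module when $r$ is odd; coprime action then forces $\Centralizer_N(x)\neq 1$ unless $r=2$. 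The closing ``$G=G'$'' statement follows by noting that $\bar G$ must then be perfect: this excludes (ii) (as ${\rm L}_{2}(8){:}3$ is imperfect) and, in the Frobenius case, forces the complement $\bar H$ to be the unique perfect Frobenius complement $\SL_2(5)$, so $p^n-1=120$, $p^n=11^2$ with $n=2$, placing us in subcase (c), while (iii) is the only remaining perfect possibility.
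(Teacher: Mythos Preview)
Your argument is correct and tracks the paper's proof closely: reduce modulo $N$, apply Theorem~\ref{t:main1} to the primitive action of $G/N$, and then analyse the three resulting configurations using Lemmas~\ref{lower bound} and~\ref{order} together with the Camina-group classification. In case~(i) the paper derives $x^G=M\setminus N$ via the permutation-character identity $(1_H^G)_M=1_N^M$, whereas you use directly that $\Delta_H(G)$ is the full preimage of $(M/N)^*$; both are fine, and your derivation of $M'=N$ and of the Camina structure of $M$ is essentially the paper's, with the Dark--Scoppola trichotomy recovered through the Sylow structure of Frobenius complements rather than quoted from \cite{Lewis}.

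The one place where your route genuinely diverges is case~(iii). The paper obtains ``$N$ is a $2$-group'' in a single stroke by invoking the Feit--Thompson theorem \cite{FT} on finite groups containing a self-centralising subgroup of order~$3$. You instead first get nilpotency of $N$ from Thompson's theorem and then run a chief-factor argument, checking on the (Brauer) character table of $A_5$ that a $3$-element fixes a nonzero vector in every irreducible $\mathbb{F}_r A_5$-module for odd $r\neq 3$, and using coprime action to pull a fixed point back to $N$. This is valid---the Brauer-character check is a small finite computation, and the coprime fixed-point lemma does the rest---and it has the virtue of avoiding the rather deep result of \cite{FT}; the trade-off is that the paper's approach is a one-line citation while yours requires a page of modular representation theory for $A_5$.
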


\begin{proof}
As previously noted, $G/N$ is a primitive permutation group on the set $\Omega$ of right cosets of $H/N$ in
$G/N$, with point stabiliser $H/N$. Clearly, $G/N$ has only one class of derangements on $\Omega$.
By Theorem \ref{t:main1}, we deduce that one of the following holds:
\begin{itemize}\addtolength{\itemsep}{0.2\baselineskip}
\item $G/N$ is a Frobenius group with an elementary
abelian kernel $M/N$ of order $p^n$ for some prime $p$, and a complement $H/N$ of order $p^n-1$;
\item $G/N\cong {\rm L}_2(8){:}3$ and $H/N\cong D_{18}{:}3$;
\item $G/N\cong A_5$ and $H/N\cong D_{10}$.
\end{itemize}
Moreover, $x^r\in N$ with $r=p,7$ or $3$, respectively.

By Lemma \ref{lower bound}, $|\Delta_H(G)|=|x^G|=|G:\Centralizer_G(x)|\geqs |H|$ and thus
$|\Centralizer_G(x)|\leqs |G:H|$.

Suppose that  $G/N\cong {\rm L}_2(8){:}3$. Then $|\Centralizer_G(x)|\leqs |G:H|=28$. By Lemma
\ref{order}, we know that $x$ is a $7$-element and so $\Centralizer_G(x)$ is a $7$-group. Since
$\la x \ra \leqs \Centralizer_G(x)$, we deduce that $\Centralizer_G(x)=\la x \ra$ with $|x|=7$ and
hence $x$ acts fixed point freely on $N$. Thompson's Theorem \cite[Theorem 4.22]{SuzII} now implies
that $N$ is a nilpotent $7'$-group.

Similarly, if $G/N\cong A_5$ then $|G:H|=6$ and $\Centralizer_G(x)=\la x\ra \cong \Z_3$, so $x$
acts fixed point freely on $N$. In this case, $N$ must be a $2$-group by the main theorem of \cite{FT}.

Finally, let us assume that $G/N$ is a Frobenius group with elementary abelian kernel $M/N$ of
order $p^n$. Then $G=HM$ with $H\cap M=N$ and $|H/N|=|M/N|-1=p^n-1$. In terms of permutation
characters, it follows that $(1_H^G)_M=1_N^M$. As $N\normeq M$, $1_N^M$ vanishes on $M\setminus N$
and thus $1_H^G$ also vanishes on this set, which implies that $M\setminus N\subseteq x^G$.
Furthermore, since $x^G\subseteq M$ and $x^G\cap N\subseteq x^G\cap H=\emptyset$, we have
$x^G\subseteq M\setminus N$ and thus $x^G=M\setminus N$. Now
\[|x^G|=|G:\Centralizer_G(x)|=|M\setminus N|=|M|-|N|=|N|(|M:N|-1)=|N|\cdot |H:N|=|H|\]
and $$|\Centralizer_G(x)|=|G:H|=|M:N|=p^n.$$

Let $y\in M\setminus N=x^G$. Then $y=x^g$ for some $g\in G$ and so
$|\Centralizer_G(y)|=|M:N|=|\Centralizer_{G/N}(Ny)|$ (the last equality holds as $G/N$ is a
Frobenius group with an elementary abelian kernel $M/N$ of order $p^n$). Hence
\[|M:N|\leqs |M:M'|\leqs |\Centralizer_M(y)|\leqs |\Centralizer_G(y)|=|M:N|.\]
So
\[|\Centralizer_{M/M'}(yM')|=|M:M'|=|M:N|=|\Centralizer_G(y)|=|\Centralizer_M(y)|.\]
Thus $M$ is a Camina group. Using the classification of Camina groups (see, for example, \cite{Lewis}), one of the following cases holds:

\begin{itemize}\addtolength{\itemsep}{0.2\baselineskip}
\item[{\rm (a)}] $M$ is a Frobenius group with kernel $M'=N$. Since $M/M'$ is elementary abelian,
we deduce that $M/M'$ is cyclic and thus $p^n=p$, which implies that $|H/N|=p-1$ and so $G/N$ is a Frobenius group of order $p(p-1)$.

\item[{\rm (b)}] $M$ is a Frobenius group with complement $Q_8$, and $p=2$. In this case,
$|M:N|=|M:M'|=4$ so $M/N\cong \Z_2\times \Z_2$ and thus $H/N \cong \Z_3$. Let $K$ be the
kernel of $M$. Then $K\normeq G$ and $G/K\cong Q_8{:}3 \cong \SL_2(3)$.

\item[{\rm (c)}] $M$ is a $p$-group.
\end{itemize}

Finally, to complete the proof of the theorem, assume that $G=G'$. Then either case (i)(c) or (iii)
holds. Suppose that case (i)(c) holds. Then $G/N$ is a Frobenius group with a perfect Frobenius
complement $H/N$. By \cite[Theorem A]{M}, we have $H/N\cong\SL_2(5)$. In particular, since
$|H/N|=120=p^n-1$, we deduce that $p^n=121=11^2$.
\end{proof}

\vs

In view of Remark \ref{rem1}, by combining Lemma \ref{general} and Theorem \ref{zeros}, the proof of Theorem \ref{t:main_app} is now complete.





\vs
\end{document}